\theoremstyle{plain}
 \newtheorem{thm}{Theorem}[section]
 \newtheorem*{theorem*}{Theorem}
 \newtheorem{prop}{Proposition}[section]
 \newtheorem{lem}{Lemma}[section]
 \newtheorem{cor}{Corollary}[section]
 \newtheorem{lem'}{``Lemma''}[section]
 \newtheorem*{claim}{Claim}
\theoremstyle{definition}
 \newtheorem{defn}{Definition}[section]
\theoremstyle{remark}
 \newtheorem{rmk}{Remark}[section]
 \numberwithin{equation}{section}
\newcommand{\N}{{\mathbb N}}
\newcommand{\Q}{{\mathbb Q}}
\newcommand{\Z}{{\mathbb Z}}
\newcommand{\F}{{\mathbb F}}
\newcommand{\mr}{\mathrm}
\newcommand{\prism}{{\mathbb{\Delta}}}
\title[Log prismatic Dieudonn\'e theory]{Log prismatic Dieudonn\'e theory for log $p$-divisible groups over $\mathcal{O}_{K}$}
\subjclass[2010]{14L05 (primary), 14D06, 13D15 (secondary)}
\keywords{}
\author[Matti W\"urthen]{\bfseries Matti W\"urthen}
\author[Heer Zhao]{\bfseries Heer Zhao}
\address{
    Matti W\"urthen, 
    Institut f\"ur Mathematik , 
    Goethe-Universit\"at Frankfurt, 
    Frankfurt am Main 60325 
    Germany, 
    	wuerthen@math.uni-frankfurt.de}
\address{
    Heer Zhao, 
    Fakult\"at f\"ur Mathematik, 
    Universit\"at Duisburg-Essen, 
    Essen 45117, 
    Germany, 
    	heer.zhao@uni-due.de}
\begin{document}

\vspace{18mm} \setcounter{page}{1} \thispagestyle{empty}

\begin{abstract}
Let $\mathcal{O}_{K}$ be a complete discrete valuation ring of mixed characteristic with perfect residue field, endowed with its canonical log-structure. We prove that log $p$-divisible groups over $\mathcal{O}_{K}$ correspond to Dieudonn\'e crystals on the absolute log-prismatic site of $\mathcal{O}_{K}$ endowed with the Kummer log-flat topology. The proof uses log-descent to reduce the problem to the classical prismatic correspondence, recently established by Ansch\"utz-Le Bras.
\end{abstract}

\maketitle

\section*{Introduction}
Let $\mathcal{O}_{K}$ be a complete discrete valuation ring of mixed characteristic $(0, p)$ with perfect residue field $k$. The problem of relating $p$-divisible groups over $\mathcal{O}_{K}$ to semi-linear algebraic objects has a long history. A complete classification (for $p>2$) was first given by Breuil \cite{Breuil-grp}, using crystalline coefficient objects over the Breuil ring $\mathcal{S}$, and later by Kisin \cite{Kisin-moduli}, using Breuil-Kisin modules over the Breuil-Kisin ring $\mathfrak{S}$. The latter approach was recently conceptualized and vastly generalized to more general base rings by Ansch\"utz-Le Bras (see \cite{ALB}), using the absolute prismatic site, introduced by Bhatt-Scholze.\\
Recall that on $Spec(\mathcal{O}_{K})$ there is a canonical log structure, induced from the special fiber. Log $p$-divisible groups over log-schemes were introduced by Kato. These objects arise as degenerations of $p$-divisible groups over $K$ (examples include the $p$-divisible groups arising from semistable abelian varieties). Log $p$-divisible groups are defined in analogy to the classical case as certain inductive limits $G=\varinjlim G_{n}$, where the $G_{n}$ are sheaves of abelian groups on the Kummer log-flat site, which Kummer log-flat locally become classical finite flat group schemes. Such finite group objects are not automatically representable by log schemes. Kato therefore singles out an important subcategory $(p-div/\mathcal{O}_{K})^{\mr{log}}_{d}\subset (p-div/\mathcal{O}_{K})^{\mr{log}}$, which consists of objects $G=\varinjlim G_{n}$, for which all $G_{n}$ as well as their Cartier-duals are representable by log-schemes. It is the category $(p-div/\mathcal{O}_{K})^{\mr{log}}_{d}$ which corresponds to semistable objects, whereas the full category $(p-div/\mathcal{O}_{K})^{\mr{log}}$ corresponds to objects that become semistable after passing to a tamely ramified extension of $K$.\\[0.5cm]

The log prismatic site over $\mathcal{O}_{K}$ has recently been introduced by Koshikawa and Koshikawa-Yao in \cite{Ko1} and \cite{Ko2}. It consists of so called log-prisms $(B, I, M_{B}, \delta_{log})$ over $\mathcal{O}_{K}$, i.e. bounded log-rings equipped with Frobenius structures on $B$ and $M_{B}$ respectively and endowed with a map $\mathcal{O}_{K}\to B/I$. The absolute log prismatic site $(\mathcal{O}_{K}, M_{\mathcal{O}_{K}})_{\prism}$ from \cite{Ko2} is then defined to be the site of all such log-prisms over $\mathcal{O}_{K}$ endowed with the strict $(p, I)$-completely flat topology. We remark here already that in fact our category is a bit different, since we only consider log-structures which are saturated. We will also consider an enhanced absolute log prismatic site $(\mathcal{O}_{K}, M_{\mathcal{O}_{K}})_{\prism, \mr{kfl}}$, where we also admit so called completely Kummer faithfully log-flat maps as coverings. There is a canonical projection of sites
\[\epsilon:(\mathcal{O}_{K}, M_{\mathcal{O}_{K}})_{\prism, \mr{kfl}}\to (\mathcal{O}_{K}, M_{\mathcal{O}_{K}})_{\prism}.\]
We may then define the category $DM((\mathcal{O}_{K}, M_{\mathcal{O}_{K}})_{\prism, \mr{kfl}})$ of finite locally free sheaves $\mathcal{M}$ over $\mathcal{O}_{\prism_{\mr{kfl}}}$, endowed with a Frobenius-linear map $\Phi_{\mathcal{M}}:\mathcal{M}\to \mathcal{M}$, such that the linearization $\phi^{*}\mathcal{M}\to \mathcal{M}$ has $\mathcal{I}$-torsion cokernel. Here $\mathcal{I}$ denotes the ideal sheaf $(B, I, M_{B}, \delta_{log})\mapsto I$ on the log-prismatic site. Note that we may and will also define the category $DM((\mathcal{O}_{K}, M_{\mathcal{O}_{K}})_{\prism})$ of log-Dieudonn\'e crystals on the site $(\mathcal{O}_{K}, M_{\mathcal{O}_{K}})_{\prism}$. It embeds fully faithfully into $DM((\mathcal{O}_{K}, M_{\mathcal{O}_{K}})_{\prism, \mr{kfl}})$, via pullback along the morphism of sites $\epsilon$. One reason for admitting more general coverings is that we want to use descent along the map $\mathcal{O}_{K}\to \mathcal{O}_{C}$, where $C:=\hat{\bar{K}}$ and $\mathcal{O}_{C}$ is equipped with its canonical log-structure. Since this log-structure is divisible (it is induced by a chart $\Q_{\geq 0}\to \mathcal{O}_{C}$), many things are simplified over $\mathcal{O}_{C}$. In particular, any log $p$-divisible group over $\mathcal{O}_{C}$ is in fact classical. Using log-descent, we then prove the following.
\begin{thm}
There is a an exact anti-equivalence of categories
\[\mathcal{M}_{\prism}^{\mr{log}}:(p-div/\mathcal{O}_{K})^{\mr{log}}\to DM((\mathcal{O}_{K}, M_{\mathcal{O}_{K}})_{\prism, \mr{kfl}}),\]
which restricts to an exact anti-equivalence of categories
\[\mathcal{M}_{\prism}^{\mr{log}}:(p-div/\mathcal{O}_{K})^{\mr{log}}_{d}\to DM((\mathcal{O}_{K}, M_{\mathcal{O}_{K}})_{\prism}).\]
\end{thm}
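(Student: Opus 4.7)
The plan is to extend the Ansch\"utz--Le Bras construction to the logarithmic setting, and then establish the anti-equivalence by descending along the (pro-)Kummer log-flat cover $\mathcal{O}_{K}\to \mathcal{O}_{C}$. The crucial input is that over $\mathcal{O}_{C}$ the canonical log-structure is divisible, so that by a result of Kato every log $p$-divisible group over $\mathcal{O}_{C}$ is in fact classical; combined with a parallel simplification on the prismatic side, this reduces everything over $\mathcal{O}_{C}$ to the non-logarithmic theorem of \cite{ALB}.

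The functor $\mathcal{M}_{\prism}^{\log}$ on $(p-div/\mathcal{O}_{K})^{\mr{log}}$ would be defined by assigning to $G=\varinjlim G_{n}$ a Dieudonn\'e crystal given, on a log-prism $(B, I, M_{B}, \delta_{\log})$, by an Ext formula of the form
\[\mathcal{M}_{\prism}^{\log}(G)(B, I, M_{B}, \delta_{\log}):=\varprojlim_{n}\mathrm{Ext}^{1}_{\mr{kfl}}\bigl(G_{n}\times_{\mathcal{O}_{K}}\Spec(B/I),\, \mathcal{O}^{\prism,\log}/p^{n}\bigr),\]
with Frobenius induced by $(B,\delta_{\log})$. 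I would then verify that this gives a finite locally free sheaf on $\mathcal{O}_{\prism_{\mr{kfl}}}$ with $\mathcal{I}$-torsion linearization cokernel by base-changing to log-prisms small enough that $G$ becomes classical, where the claim reduces to the corresponding classical property.

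For the equivalence itself I would first work over $\mathcal{O}_{C}$. There $(p-div/\mathcal{O}_{C})^{\mr{log}}$ equals the category of classical $p$-divisible groups, and inside $(\mathcal{O}_{C}, M_{\mathcal{O}_{C}})_{\prism, \mr{kfl}}$ the log-prisms with divisible log-structure form a basis, so $DM((\mathcal{O}_{C}, M_{\mathcal{O}_{C}})_{\prism, \mr{kfl}})$ identifies with the classical prismatic Dieudonn\'e category over $\mathcal{O}_{C}$; the theorem on this base is then \cite{ALB}. To pass back to $\mathcal{O}_{K}$, I would verify descent for both categories along the completely Kummer faithfully log-flat cover $\mathcal{O}_{K}\to \mathcal{O}_{C}$, bootstrapped from Kummer finite log-flat descent along tame extensions $\mathcal{O}_{K}\to \mathcal{O}_{K(\pi^{1/n})}$ followed by a filtered colimit and $p$-adic completion, and then invoke compatibility of $\mathcal{M}_{\prism}^{\log}$ with base change to conclude.

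For the restricted anti-equivalence onto $DM((\mathcal{O}_{K}, M_{\mathcal{O}_{K}})_{\prism})$, I would show that $G$ lies in $(p-div/\mathcal{O}_{K})^{\mr{log}}_{d}$ exactly when both its associated crystal and its Cartier dual are already sheaves on the un-enhanced log-prismatic site, with representability of $G_{n}$ and of $G_{n}^{\vee}$ controlling the two implications. The hardest step I expect is the descent itself: $\mathcal{O}_{K}\to \mathcal{O}_{C}$ is only a pro-cover with infinitely ramified Galois group, so Frobenius, the ideal $\mathcal{I}$, and the log structures must be tracked carefully at every finite stage, and one must verify that a descent datum of Dieudonn\'e crystals on the log-prismatic nerve of $\mathcal{O}_{K}\to \mathcal{O}_{C}$ really descends to a genuine log structure on the resulting $p$-divisible group in the sense of Kato, rather than merely to an abstract cocycle.
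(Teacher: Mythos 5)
Your overall strategy for the first equivalence---descend along $\mathcal{O}_{K}\to\mathcal{O}_{C}$, where both log $p$-divisible groups and log-Dieudonn\'e crystals become classical, and then invoke Ansch\"utz--Le Bras---is the paper's strategy. But the step you compress into ``verify descent \dots and invoke compatibility with base change'' is where the work lies, and your proposed mechanism for it does not function. To convert descent into an equivalence one must identify both categories with descent data over the \v{C}ech nerve of $\mathcal{O}_{K}\to\mathcal{O}_{C}$, hence one must control the self-products $\mathcal{O}_{C}^{(n),\mr{log}}$ in the category of $p$-complete saturated log rings. These are neither $\mathcal{O}_{C}$ nor perfectoid: the paper computes the saturation of $\Q_{\geq 0}\oplus_{\N}\cdots\oplus_{\N}\Q_{\geq 0}$ to be $(\Q/\Z)^{n-1}\times\Q_{\geq 0}$, proves the resulting rings are quasi-regular semiperfectoid, and shows their log-prismatic sites have initial objects, so that log-Dieudonn\'e crystals become honest modules and the classical theorem applies levelwise along the nerve. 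Your alternative of bootstrapping from the finite tame stages $\mathcal{O}_{K(\pi^{1/n})}$ cannot replace this: at every finite stage the log structure is still generated by a single non-divisible element, log $p$-divisible groups there are not yet classical, and no reduction to the classical theorem is available; the simplification happens only in the completed limit. Note also that in the qfs framework of Section 1 the map $\mathcal{O}_{K}\to\mathcal{O}_{C}$ is an honest cover of the site, not merely a pro-cover---this is exactly why the paper enlarges the Kummer log-flat site to non-finitely-generated saturated monoids.

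For the restricted statement your sketch is essentially a restatement of what must be proved. The easy direction ($G\in(p-div/\mathcal{O}_{K})^{\mr{log}}_{d}$ implies $\mathcal{M}^{\mr{log}}_{\prism}(G)\in DM((\mathcal{O}_{K},M_{\mathcal{O}_{K}})_{\prism})$) uses Kato's connected-\'etale sequence $0\to G'\to G\to G''\to 0$ with $G'$, $G''$ classical, exactness of the functor, and a gluing lemma for locally free sheaves. The hard direction---that every object of $DM((\mathcal{O}_{K},M_{\mathcal{O}_{K}})_{\prism})$ comes from a $G$ with all $G_{n}$ and $G_{n}^{\vee}$ representable---has no mechanism in your proposal. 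The paper produces, following Kisin, a functorial exact sequence $0\to\mathcal{M}^{m}\to\mathcal{M}\to\mathcal{M}^{\mr{\acute{e}t}}\to 0$ with $\mathcal{M}^{m}$ multiplicative and $\mathcal{M}^{\mr{\acute{e}t}}$ \'etale, and then proves the nontrivial lemma that multiplicative and \'etale objects of $DM((\mathcal{O}_{K},M_{\mathcal{O}_{K}})_{\prism})$ are classical (via \'etale realization over $A_{inf}$, Artin--Schreier--Witt theory, and a comparison of descent data over $\mathfrak{S}^{(1),\mr{log}}$ and $\mathfrak{S}^{(1)}$ after reduction modulo $E(u)$). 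Exactness then yields the connected-\'etale sequence of $G$ with classical pieces, whence representability of $G$ and, by duality, of $G^{\vee}$. Without a substitute for this filtration argument the second half of your proof does not close.
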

We remark that Kato has proved a similar classification theorem (see \cite[Theorem 6.3]{k-ld}), using log-crystalline coefficient objects, under certain conditions on the base scheme (for instance when $\mathcal{O}_{K}$ is totally unramified, but also for higher dimensional log-schemes).

\section*{Acknowledgements}
M.W. was supported by Deutsche Forschungsgemeinschaft (DFG, German Research Foundation), TRR 326 \textit{Geometry and Arithmetic of Uniformized Structures} , project number 444845124, and by Deutsche Forschungsgemeinschaft (DFG, German Research Foundation), Sachbeihilfe \textit{Vector bundles and local systems on non-Archimedean analytic spaces}, project number 446443754.

\section{The Kummer log-flat topology}
In this section we want to extend the definition of Kato's Kummer topologies to certain cases where the log-structure is induced by possibly non-finitely generated monoid, as this will often be the case in non-noetherian situations.
\begin{defn}
A monoid $P$ is called quasi-fine and saturated (qfs), if it is integral and saturated.\\
A morphism $\alpha:P\to Q$ of qfs monoids is called Kummer (or of Kummer type), if $\alpha$ is injective and for all $q\in Q$, there exists an $n\in \mathbb{N}$, such that $q^{n}\in \alpha(P)$.
\end{defn}
Any qfs monoid $P$ can be written as a filtered colimit of fs monoids. Indeed we have $P=\varinjlim\limits_{Q\subset P}Q^{sat}$, where $Q$ runs over all finitely generated submodules of $P$.\\
We will only work with saturated log-structures. A qfs log-ring is a pair $(A, M_{A})$, where $A$ is a commutative ring and $M_{A}$ is a sheaf of monoids on $Spec(A)_{\acute{e}t}$, satisfying the usual conditions and such that \'etale locally, $M_{A}$ is induced by a qfs monoid. If $(A, M_{A})\to (B, M_{B})$ and $(A, M_{A})\to (C, M_{C})$ are morphisms of log-rings, which are induced by morphisms of pre-log rings $(A, P_{A})\to (B, P_{B})$ and $(A, P_{A})\to (C, P_{C})$ respectively (i.e. $P_{A}, P_{B}, P_{C}$ are monoids and $P_{A}\to A$ is a chart for $M_{A}$, etc.). Then we define 
\[B\otimes_{A}^{log}C:=B\otimes_{A}C \otimes_{\Z[Q]}{\Z[Q^{sat}]}\]
where $Q:=P_{B}\oplus_{P_{A}}P_{C}$, and endow $B\otimes_{A}^{log}C$ with the log structure induced from the pre-log structure $Q\to B\otimes_{A}^{log}C$.

\begin{defn}
A morphism $f:(A, M_{A})\to (B, M_{B})$ of qfs log-rings is called Kummer, if locally there exists a Kummer morphism $\alpha:P\to Q$ of qfs monoids, such that $P\to M_{A}$ and $Q\to M_{B}$ are charts and the diagram
\begin{center}
$\xymatrix{
P\ar[r]^{\alpha}\ar[d] & Q\ar[d]\\
M_{A}\ar[r] & M_{B}
}$
\end{center}
commutes.\\
We call $f:(A, M_{A})\to (B, M_{B})$ log-flat, if after passing to a localization there exists a chart $\alpha:P\to Q$ of $f$, such that the induced map $A\otimes_{\Z[P]}\Z[Q]\to B$ is flat.\\
Let $(X, M_{X})$ be a log scheme. We call $X$ qfs, if \'etale locally $M_{X}$ is induced by a qfs monoid.\\
Kummer and log-flat morphisms of log-schemes are defined in analogy to the case of log-rings above.\\ 
For any qfs log-scheme $S$, we denote by $Log^{\mr{qfs}}(S)$ the category of qfs-log schemes over $S$.
\end{defn}
\begin{lem}\label{chart-form}
Let $(R, M_{R})\to (B, M_{B})$ be a Kummer log-flat morphism and let $P\to M_{R}$ be a chart. After a classically flat localization on $B$, we may assume that there exists a chart $Q\to B$ of $M_{B}$ and a chart $\alpha:P\to Q$ of $M_{R}\to M_{B}$, such that $\alpha$ is Kummer and $R\otimes_{\Z[P]}\Z[Q]\to B$ is flat.
\end{lem}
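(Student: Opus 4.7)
The plan is to first use the Kummer log-flat hypothesis to obtain an auxiliary Kummer chart of $f$, then combine it with the given chart $P$ to produce a Kummer chart of the morphism emanating from $P$, and finally verify the flatness condition.

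\emph{Step 1.} By the definition of Kummer log-flat, after a classically flat localization on $B$ there exists some Kummer chart $\alpha_0 \colon P_0 \to Q_0$ of $f$ with $P_0 \to M_R$, $Q_0 \to M_B$ charts and $R \otimes_{\Z[P_0]} \Z[Q_0] \to B$ flat. Since $P$ and $P_0$ are both charts of the same qfs log structure $M_R$, the saturation $\tilde{P} := (P \oplus P_0)^{sat}$ of their sum is again a chart of $M_R$, and the saturated pushout $\tilde{Q} := (\tilde{P} \oplus_{P_0} Q_0)^{sat}$ is a chart of $M_B$ fitting into a Kummer chart $\tilde{P} \to \tilde{Q}$ of the morphism, the Kummer property being inherited from $\alpha_0$.

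\emph{Step 2.} To obtain a Kummer chart starting at $P$ itself rather than at the enlargement $\tilde{P}$, I take the Kummer-closure of $P$ inside $\tilde{Q}$:
\[
Q := \{\, q \in \tilde{Q} : q^n \in P \text{ via } P \to \tilde{P} \to \tilde{Q} \text{ for some } n \geq 1 \,\}.
\]
This is a saturated submonoid of $\tilde{Q}$ and, by construction, $P \to Q$ is Kummer. The delicate point is that $Q \to M_B$ is still a chart: the existence of $\alpha_0$ forces the cokernel of $\overline{M}_R^{gp} \to \overline{M}_B^{gp}$ to be torsion, so every element of $\tilde{Q}$ has a power lying in $P \cdot B^\times$, and after a further classically flat localization on $B$ absorbing the required unit factors into $B^\times$, the submonoid $Q$ still generates $M_B$ as a log structure.

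\emph{Step 3 and main obstacle.} Kummer morphisms between qfs monoids give rise to (faithfully) flat morphisms on the associated monoid algebras after saturation, a standard fact in log geometry. Applied to the passage from $(P_0, Q_0)$ to $(\tilde{P}, \tilde{Q})$, and symmetrically from $(P, Q)$ to $(\tilde{P}, \tilde{Q})$, this yields faithfully flat maps $R \otimes_{\Z[P_0]} \Z[Q_0] \to R \otimes_{\Z[\tilde{P}]} \Z[\tilde{Q}] \leftarrow R \otimes_{\Z[P]} \Z[Q]$. Combined with the flatness of $R \otimes_{\Z[P_0]} \Z[Q_0] \to B$ and faithfully flat descent, one concludes that $R \otimes_{\Z[P]} \Z[Q] \to B$ is flat, possibly after a final classically flat localization on $B$. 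The principal obstacle is Step 2: replacing $\tilde{P}$ by the prescribed $P$ while preserving both Kummer-ness and the chart property on $B$ is a delicate local maneuver that depends crucially on the qfs (saturated) hypothesis allowing the saturations and unit absorptions to behave well.
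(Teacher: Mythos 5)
The paper's own ``proof'' is only a pointer to \cite[Lem.\ 1.11]{Kat2}, so you are reconstructing Kato's argument; your skeleton (compare the given chart $P$ with a chart witnessing Kummer log-flatness, take a Kummer closure, transfer flatness) is the right one, but two of your steps break down. First, the direct sum of two charts of the same log structure is \emph{not} a chart: for the log structure on $\Spec k[t]$ generated by $t$, the prelog structure $\N\oplus\N\to k[t]$, $(a,b)\mapsto t^{a+b}$, has associated log structure strictly larger than $M$. So $\tilde P=P\oplus P_0$ and $\tilde Q=\tilde P\oplus_{P_0}Q_0=P\oplus Q_0$ are not charts. Worse, in the abstract monoid $P\oplus Q_0$ the images of $P$ and $Q_0$ do not interact at all, so your ``Kummer closure'' $Q=\{q: q^n\in P\}$ is just $P$ plus the torsion of $Q_0$ --- it cannot be a chart of $M_B$ unless $f$ is strict. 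The closure has to be taken inside $\Gamma(M_B)$ (or $\overline M_B$), where the images of $P$ and $Q_0$ genuinely mix; and there the problem you wave at in Step 2 becomes the whole point: an element of $M_B$ has a power in $\mathrm{im}(P)\cdot B^\times$, not in $\mathrm{im}(P)$, and the closure of $\mathrm{im}(P)\cdot B^\times$ contains $B^\times$ and is never Kummer over $P$. Producing an honest Kummer submonoid requires adjoining $n$-th roots of the offending units, which is exactly why the localization in the statement is classically \emph{flat} rather than \'etale or Zariski; ``absorbing the unit factors into $B^\times$'' does not do this.

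Second, the ``standard fact'' invoked in Step 3 is false: a Kummer morphism of fs (hence saturated) monoids need not induce a flat map of monoid algebras. Take $P=\{(a,b)\in\N^2: a+b\ \text{even}\}\hookrightarrow Q=\N^2$, which is Kummer with $Q^{\mathrm{gp}}/P^{\mathrm{gp}}=\Z/2$; then $\Z[P]\to\Z[Q]$ decomposes the target as $\Z[P]\oplus M$ with $M$ the span of odd-degree monomials, a rank-one reflexive module needing two generators at the cone point, hence not flat. (If such maps were flat, Kummer log-flat morphisms would be classically flat, which they are not.) So flatness of $R\otimes_{\Z[P]}\Z[Q]\to B$ cannot be obtained by sandwiching between monoid algebras. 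The correct mechanism, which is what Kato's proof actually does, is to compare the two strict localizations: both $\Spec(R\otimes_{\Z[P]}\Z[Q])$ and $\Spec(R\otimes_{\Z[P_0]}\Z[Q_0])$ receive strict maps from $\Spec(B)$, and once $Q$ is built so that $P\to Q$ is an honest Kummer chart of $f$, the two fibre products agree up to a (faithfully) flat cover obtained from the unit-root extraction, whence flatness of one structure map implies that of the other. As written, your argument establishes neither that $Q$ is a chart nor the flatness assertion.
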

\begin{proof}
This is a non-finite type version of \cite[Lem. 1.11]{Kat2}. The proof in loc. cit. works also in this case. 
\end{proof}

The following is a generalization of \cite[Lem. 2.4]{Kat2}.
\begin{lem}\label{kat2.4}
Let $f:X\to Y$ be a Kummer morphism of qfs log schemes and let $Y'\to Y$ be a qfs log scheme over $Y$. Then 
\begin{enumerate}
\item $f':X'=X\times_{Y}Y'$ is Kummer.
\item If $f$ is moreover surjective, then $f'$ is surjective.
\end{enumerate}
\end{lem}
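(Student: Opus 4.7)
The plan is to reduce both assertions to Kato's original lemma in the fs setting by approximating the qfs data via filtered colimits of fs submonoids. Both statements are local on $Y'$, so we may work with affine qfs log schemes. After a localization, we may invoke \cref{chart-form} (or just the definition of a Kummer morphism) to assume that $f$ arises from a Kummer morphism $\alpha\colon P\to Q$ of qfs monoids, and that $Y'\to Y$ is induced by a morphism $P\to P'$ of qfs monoids. In this situation the log scheme $X'=X\times_{Y}Y'$ is locally modelled by the chart data $P'\to R:=(Q\oplus_{P}P')^{\mr{sat}}$, so it suffices to analyse this map of monoids.

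For part (1), write each of $P$, $Q$, $P'$ as a filtered colimit of fs submonoids, $P=\varinjlim P_{i}$, $Q=\varinjlim Q_{j}$, $P'=\varinjlim P'_{k}$, chosen compatibly so that $\alpha$ restricts to Kummer morphisms $P_{i}\to Q_{j(i)}$ between fs monoids (this can be arranged by replacing $Q_{j(i)}$ by the saturation of the submonoid generated by $\alpha(P_{i})$ together with the image of any finite subset of $Q$, which is again fs and Kummer over $P_{i}$), and that $P\to P'$ restricts to $P_{i}\to P'_{k(i)}$. Kato's original \cite[Lem.~2.4]{Kat2} applied at each finite stage gives that $P'_{k}\to (Q_{j}\oplus_{P_{i}}P'_{k})^{\mr{sat}}$ is Kummer. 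Saturation of integral monoids is a left adjoint, so it commutes with filtered colimits; filtered colimits commute with pushouts. Hence the colimit of the stagewise maps is exactly $P'\to R$, and both the injectivity and the ``$n$-th power in $P'$'' property pass through the filtered colimit without trouble.

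For part (2), surjectivity of $f'$ on underlying topological spaces can be tested after chart localization as above. A Kummer morphism $P'\to R$ of qfs monoids induces an integral ring map $\Z[P']\to \Z[R]$, since any element of $R$ has some $n$-th power in $P'$ and therefore satisfies a monic polynomial with coefficients in $\Z[P']$. It follows that $\Spec \Z[R]\to \Spec \Z[P']$ is surjective, and combined with the assumed surjectivity of $f$ this yields surjectivity of $f'$ on the underlying schemes (the logarithmic structure does not enter this step).

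The main obstacle is the bookkeeping in part (1): one has to choose the filtered systems $\{P_{i}\}$, $\{Q_{j}\}$, $\{P'_{k}\}$ in a mutually compatible way so that Kato's fs result applies at every finite stage, and then identify the filtered colimit of the stagewise saturated pushouts with $(Q\oplus_{P}P')^{\mr{sat}}$. Once this compatibility is set up, both statements follow formally from the fs case; I do not expect any genuinely new content beyond that reorganisation.
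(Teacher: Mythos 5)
Your treatment of part (1) is fine in substance: reducing to Kato's fs statement by writing $P$, $Q$, $P'$ as filtered colimits of fs submonoids is a legitimate alternative to the paper's route (the paper simply observes that Kato's direct monoid-theoretic verification never uses finite generation and so applies verbatim to qfs monoids). One bookkeeping caveat: the recipe ``saturation of the submonoid generated by $\alpha(P_i)$ and finitely many elements of $Q$'' is not automatically Kummer over $P_i$, because for $q\in Q$ with $q^n=\alpha(p)$ you are only guaranteed $p\in P$, not $p\in P_i$; you must simultaneously enlarge $P_i$ to contain these roots' preimages before the fs-level statement applies. This is fixable by cofinal re-indexing, as you anticipate.

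Part (2), however, has a genuine gap. The underlying scheme of $X'=X\times_Y Y'$ in qfs log schemes is not $\underline{X}\times_{\underline{Y}}\underline{Y}'$; with charts as in your setup it is
\[
(\underline{X}\times_{\underline{Y}}\underline{Y}')\times_{\Spec \Z[Q\oplus_P P']}\Spec \Z[R],\qquad R=(Q\oplus_P P')^{\mr{sat}},
\]
and the map $\Z[Q\oplus_P P']\to\Z[R]$ is \emph{not} injective: it kills the integralization relations, so $\Spec\Z[R]\to\Spec\Z[Q\oplus_P P']$ factors through the closed subscheme $\Spec\Z[(Q\oplus_P P')^{\mr{int}}]$ and is not surjective. (Kummer morphisms need not be integral in Kato's sense, so the plain pushout genuinely fails to be integral in general; the paper's own \cref{rmk-prod} shows saturation is needed even for $\Q_{\geq 0}\oplus_{\N}\Q_{\geq 0}$.) Your lying-over argument is correct for the map $\Z[P']\to\Z[R]$, which is injective and integral, but that is the wrong map: surjectivity of $\Spec\Z[R]\to\Spec\Z[P']$ together with surjectivity of $\underline{X}\times_{\underline{Y}}\underline{Y}'\to\underline{Y}'$ does not formally yield surjectivity of their fibre product over $\Spec\Z[Q\oplus_P P']$ onto $\underline{Y}'$. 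Concretely, a point $z$ of $\underline{X}\times_{\underline{Y}}\underline{Y}'$ over $y'$ gives compatible maps $Q\to\kappa(z)$ and $P'\to\kappa(z)$, and this pair need not factor through $(Q\oplus_P P')^{\mr{int}}$ (the verification breaks exactly when some chart element maps to $0$). So the parenthetical ``the logarithmic structure does not enter this step'' is where the proof fails: the entire content of Kato's argument for (2) is to reduce to a log point of $Y'$ and then \emph{construct} a field-valued point of $R$ extending the given compatible points of $Q$ and $P'$, which uses the Kummer hypothesis essentially (choosing compatible roots in a separably closed field and controlling which elements are sent to $0$). That construction, not lying-over, is the missing ingredient; once supplied, it too works without finiteness hypotheses, which is why the paper can quote Kato's proof directly.
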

\begin{proof}
The proof in loc.cit works also in this case.
\end{proof}
For a qfs log-scheme $X$, the previous two lemmas imply that by taking qfs-Kummer log-flat coverings, we get a well-defined site $X_{kfl}$, which we call the Kummer flat site.\\
If $X$ is an fs-log scheme, usually, for example in \cite{Kat2}, one defines the Kummer log-flat topology, using only coverings by fs-log schemes which moreover are assumed to be locally of finite presentation. We will denote this site by $X_{\mr{fpkfl}}$. \\
For a monoid $P$ and $n\geq 0$, we denote by $P^{\frac{1}{n}}$ the $P$-monoid, for which $P^{\frac{1}{n}}\cong P$ and the structure map $P\to P^{\frac{1}{n}}$ identifies with $n:P\to P$. The following is a non finite type version of \cite[Prop 2.15]{niz1}.
\begin{prop}\label{kfl-loc}
Let $\Phi:(R, M_{R})\to (B, M_{B})$ be a Kummer log-flat morphism of log rings, and let $P\to M_{R}$ be a chart, such that $P^{\times}=\{1\}$. Then for any point $x\in Spec(B)$, there exists a commutative diagram
\begin{center}
$\xymatrix{
(P\to R)\ar[r]^{\phi}\ar[d] & (Q\to B)\ar[d]^{f}\\
\varinjlim\limits_{n}R\otimes_{\Z[P]}\Z[P^{\frac{1}{n}}]\ar[r]^{g} & (Q'\to C)
}$
\end{center}
of pre-log rings, where $\phi$ induces the map $\Phi$, and such that $f$ is log-flat, $g$ is classically flat and $x$ lies in the image of $Spec(C)\to Spec(B)$.
\end{prop}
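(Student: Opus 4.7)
The plan is to normalize the chart via Lemma~\ref{chart-form} and then construct the cover by base-changing to the universal root extension $P^{1/\infty}:=\varinjlim_n P^{1/n}$, absorbing the Kummer data uniformly.

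\emph{Step 1 (chart normalization).} Applying Lemma~\ref{chart-form} to $\Phi$ and working in a flat neighbourhood of a preimage of $x$, I would reduce to the case where there exists a chart $\alpha:P\to Q$ of $\Phi$ with $\alpha$ Kummer and with $R\otimes_{\Z[P]}\Z[Q]\to B$ classically flat.

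\emph{Step 2 (construction of $C$).} Each $q\in Q$ satisfies $q^{n_q}=\alpha(p_q)$ for some $n_q\geq 1$ and $p_q\in P$; if also $q^{m}=\alpha(p')$, then $\alpha(p_q^{m})=\alpha((p')^{n_q})$, and injectivity of $\alpha$ gives $p_q^{m}=(p')^{n_q}$ in $P$, so $p_q^{1/n_q}$ and $(p')^{1/m}$ represent the same element of $P^{1/\infty}$. The assignment $q\mapsto p_q^{1/n_q}$ therefore defines a monoid map $\beta:Q\to P^{1/\infty}$, with $\beta\circ\alpha$ equal to the canonical map $P\to P^{1/\infty}$. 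I would then set $Q':=P^{1/\infty}$ and
\[C:=B\otimes_{\Z[Q]}\Z[P^{1/\infty}],\]
endowed with the pre-log structure $P^{1/\infty}\to C$, and take $f:(Q\to B)\to(P^{1/\infty}\to C)$ to be the evident map of pre-log rings using $\beta$.

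\emph{Step 3 (verifications).} The chart $\beta:Q\to P^{1/\infty}$ exhibits $f$ as log-flat, since by construction the ring map $B\otimes_{\Z[Q]}\Z[P^{1/\infty}]\to C$ is the identity. Using $\beta\circ\alpha=(P\to P^{1/\infty})$ we get
\[R\otimes_{\Z[P]}\Z[P^{1/\infty}]=(R\otimes_{\Z[P]}\Z[Q])\otimes_{\Z[Q]}\Z[P^{1/\infty}],\]
so $g$ is the base change of the classically flat map $R\otimes_{\Z[P]}\Z[Q]\to B$ along $\Z[Q]\to\Z[P^{1/\infty}]$, and is therefore classically flat. Finally, each generator $p^{1/n}\in P^{1/\infty}$ satisfies the monic polynomial $X^{n}-\alpha(p)$ over $\Z[Q]$, so $\Z[Q]\to\Z[P^{1/\infty}]$ is integral; base-changing, $B\to C$ is integral and hence surjective on spectra, which places $x$ in its image.

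The main obstacle compared with the finite-type argument of \cite[Prop.~2.15]{niz1} is the absence of finite generation of $P$ and $Q$: one cannot absorb the Kummer chart $\alpha$ into a single fixed-denominator extension $P^{1/n}$. Once the well-definedness of $\beta:Q\to P^{1/\infty}$ is established via cancellation in $P$, the remaining assertions reduce to routine base-change arguments.
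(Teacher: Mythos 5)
Your construction is, up to reassociating tensor products, the same as the paper's: after the chart normalization of \cref{chart-form} the paper also takes $C=B\otimes_{R\otimes_{\Z[P]}\Z[Q]}(R\otimes_{\Z[P]}\varinjlim_n\Z[P^{\frac{1}{n}}])=B\otimes_{\Z[Q]}\Z[P^{1/\infty}]$, and your explicit well-definedness check for $\beta:Q\to P^{1/\infty}$ is a welcome elaboration of the factorization $P\to Q\to\varinjlim_n P^{\frac{1}{n}}$ that the paper merely asserts. The verifications that $f$ is log-flat and that $g$ is classically flat are correct.

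There is, however, a gap in your last step. ``Integral, hence surjective on spectra'' is not a valid implication: lying over requires the ring map to be injective as well (e.g.\ $\Z\to\Z/2$ is integral but not surjective on spectra). So you need $\Z[Q]\to\Z[P^{1/\infty}]$ to be injective, i.e.\ $\beta$ to be injective, and this can fail when $Q^{\mathrm{gp}}$ has torsion. Concretely, take $P=\N$ and $Q=\N\times\Z/2$ (integral and saturated) with $\alpha(n)=(n,0)$; this is Kummer, but $\beta(a,b)=a$ kills the $\Z/2$-factor, the map $\Z[Q]\to\Z[\Q_{\geq 0}]$ has $s-1$ in its kernel (where $s$ denotes the generator of $\Z/2$), and $Spec(C)\to Spec(B)$ misses the locus $s=-1$. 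This is precisely why the paper inserts the reduction ``we may further assume that $Q$ is torsion free'' before factoring $Q$ through $\varinjlim_n P^{\frac{1}{n}}$: since $P^{\times}=\{1\}$ and $P$, $Q$ are saturated, the torsion of $Q^{\mathrm{gp}}$ lies in $Q^{\times}$ and can be removed from the chart. Once $Q^{\mathrm{gp}}$ is torsion-free, your $\beta$ is injective (if $q_1^N=q_2^N$ then $q_1=q_2$), so $\Z[Q]\to\Z[P^{1/\infty}]$ is an injective integral extension, lying over gives surjectivity of $Spec(\Z[P^{1/\infty}])\to Spec(\Z[Q])$, and surjectivity (non-emptiness of all fibres) passes to the base change $Spec(C)\to Spec(B)$. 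With that reduction added, your argument is complete and coincides with the paper's.
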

\begin{proof}
Using \cref{chart-form} we may assume, after localizing (around $x$) on $B$, that there exists a chart $Q\to B$ of $M_{B}$ and a chart $\alpha:P\to Q$ of $M_{R}\to M_{B}$, such that $\alpha$ is Kummer, $R\otimes_{\Z[P]}\Z[Q]\to B$ is flat and $\Z[P]\to \Z[Q]$ is log-flat.
Since $P$ is torsion free and $Q$ is saturated, we may further assume that $Q$ is torsion free and thus also $P^{\mr{gp}}=Q^{\mr{gp}}$. But then the map $P\to \varinjlim\limits_{n} P^{\frac{1}{n}}$ factors as $P\to Q\to \varinjlim\limits_{n} P^{\frac{1}{n}}$. We can then define $C:=B\otimes_{R\otimes_{\Z[P]}\Z[Q]}(R\otimes_{\Z[P]}\varinjlim\limits_{n}\Z[P^{\frac{1}{n}}])$. Then $R\otimes_{\Z[P]}\varinjlim\limits_{n}\Z[P^{\frac{1}{n}}]\to C$ is flat, since it is the base change of the flat map $R\otimes_{\Z[P]}\Z[Q]\to B$. If we then endow $C$ with the log-structure induced by $Q':=\varinjlim\limits_{n}P^{\frac{1}{n}}\to C$, we obtain the desired commutative diagram.
\end{proof}

\begin{prop}\label{desc-lf}
Pushforward and pullback along the canonical morphism of ringed sites $X_{\mr{kfl}}\to X_{\mr{fpkfl}}$ yield quasi-inverse equivalences between the category of finite locally free sheaves  on $X_{\mr{kfl}}$ and the category of finite locally free sheaves on $X_{\mr{fpkfl}}$.
\end{prop}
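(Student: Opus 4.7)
The plan is to show that $\epsilon^{*}$ restricts to an equivalence on finite locally free sheaves, with quasi-inverse $\epsilon_{*}$. Since pullback clearly preserves finite locally free sheaves (any fpkfl cover trivializing a sheaf is also a kfl cover trivializing its pullback), it suffices to verify essential surjectivity and full faithfulness of $\epsilon^{*}$.

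For essential surjectivity, I would start from a finite locally free $\mathcal{F}$ on $X_{\mr{kfl}}$ together with a trivializing kfl cover $\{U_{i}\to X\}$. The task is then to refine both the $U_{i}\to X$ and the log-fiber products $U_{i}\times_{X}^{\mr{log}}U_{j}$ by fpkfl covers along which the trivializations and the gluing data descend. The central tool is \cref{kfl-loc}, which says that after a classical flat (hence fpkfl) localization, any Kummer log-flat cover with chart $P$ is refined by the standard cover
\[(R,P)\to \bigl(\varinjlim_{n}R\otimes_{\Z[P]}\Z[P^{1/n}],\ \varinjlim_{n}P^{1/n}\bigr).\]
When $P$ is fs, each finite stage $(R\otimes_{\Z[P]}\Z[P^{1/n}], P^{1/n})$ is a standard log-Kummer cover of finite presentation, hence fpkfl; for a general qfs $P$ I would first write $P=\varinjlim_{\alpha} P_{\alpha}$ as a filtered colimit of fs sub-charts and reduce to the fs case.

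It then remains to descend the trivializations (and the cocycle data on overlaps) from these filtered colimits to finite stages. Since finite locally free modules are finitely presented, standard limit arguments (cf.\ EGA IV, \S8) guarantee that each module together with its isomorphism to a free module, and the relevant cocycle data, comes from some finite level of the colimit. For full faithfulness, I would observe that $\underline{\mathrm{Hom}}$ of finite locally free sheaves is again finite locally free, which reduces the comparison of $\mathrm{Hom}$-sets to comparing global sections of a single finite locally free sheaf; the latter is handled by the same descent argument combined with faithfully flat descent of modules.

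The main obstacle I anticipate is the simultaneous descent of local trivializations, gluing data on double overlaps, and cocycle conditions on triple overlaps to a common finite stage in the various filtered colimits (the integer $n$ coming from $P^{1/n}$, and the fs sub-charts $P_{\alpha}$ of the qfs monoid $P$). This is a finite presentation argument, but requires care to ensure that the log-fiber products $U_{i}\times_{X}^{\mr{log}}U_{j}$ interact compatibly with the colimit reductions, so that descent data are preserved along the way.
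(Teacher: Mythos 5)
Your plan is correct and follows essentially the same route as the paper: both arguments use \cref{kfl-loc} to refine a trivializing kfl cover by (a classically flat cover of) the standard pro-Kummer covering $\tilde X=\mathrm{Spec}(\varinjlim_n A\otimes_{\Z[P]}\Z[P^{1/n}])$, and then use finite presentation over the filtered colimit to descend the trivialization and descent data to a finite stage $P^{1/N}$, which is an fpkfl covering. The only remark worth making is that the bookkeeping you flag as the main obstacle is exactly what the paper streamlines away: it first applies classical flat descent along $C\to\tilde X$ to reduce to a bundle trivial over the single covering $\tilde X\to X$, so that only one descent datum over $\tilde X\times_X^{\mathrm{log}}\tilde X$ (whose coordinate ring is a filtered colimit) needs to be pushed down to finite level.
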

\begin{proof}
We need to show that kfl-locally free sheaves are locally free for the fpkfl-topology. We may assume that $X=Spec(A)$ is affine and that $M_{X}$ is induced by a chart $P\to M_{X}$ with $P^{\times}=\{1\}$. Let $Y=Spec(B)\to X$ a Kummer log-flat cover and let $\mathcal{E}$ be a finite locally free $\mathcal{O}_{Y_{\mr{kfl}}}$-module. By \cref{kfl-loc}, after passing to a further kfl-covering of $Y$, we may assume that $\mathcal{E}$ is trivial and that $Y\times_{X}\tilde{X}\to \tilde{X}$ is classically flat, where $\tilde{X}$ again denotes $Spec(\varinjlim\limits_{n}A\otimes_{\Z[P]}\Z[P^{\frac{1}{n}}]$. Using classical flat descent for vector bundles, we are thus reduced to treating bundles that become trivial over the covering $\tilde{X}\to X$. Now the descent datum for such a bundle is given by an isomorphism 
\[(A\otimes (\Z[\tilde{P}]\otimes_{\Z[P]} \Z[\tilde{P}])\otimes \Z[(\tilde{P}\oplus_{P}\tilde{P})^{\mr{sat}}])^{d}\cong  (A\otimes (\Z[\tilde{P}]\otimes_{\Z[P]} \Z[\tilde{P}])\otimes \Z[(\tilde{P}\oplus_{P}\tilde{P})^{\mr{sat}}])^{d}.\]
Since $\Z[\tilde{P}]=\varinjlim\limits_{n}\Z[P^{\frac{1}{n}}]$, we see that the descent datum descends to $A\otimes_{\Z[P]}\Z[P^{\frac{1}{N}}]$, for some $N>0$.
\end{proof}

\begin{defn}
A $p$-adic formal log-scheme $(X, M_{X})$ is called quasi-fine, if locally in the \'etale topology, there exists a chart $P\to M_{X}$, such that $P$ is an integral quasi-coherent monoid.\\
We call $(X, M_{X})$ quasi-fine saturated (qfs), if locally there exists a chart $P$, such that $P$ is quasi-fine and saturated.\\
For a bounded qfs $p$-adic formal log-scheme $(S, M_{S})$, we denote by $Log^{qfs}(S)$ the category of bounded qfs $p$-adic formal log schemes over $S$.
\end{defn}

\begin{defn}
A morphism of qfs $p$-adically complete log rings $f:(A, M_{A})\to (B, M_{B})$ is said to be $p$-completely Kummer log-flat, if there exists a Kummer chart $P\to Q$ of $M_{A}\to M_{B}$, such that $\Z[P]$ and $\Z[Q]$ have bounded $p^{\infty}$-torsion and $A\hat{\otimes}_{\Z[P]}\Z[Q]\to B$ is classically $p$-completely flat.
\end{defn}

\begin{lem}\label{surjective}
Let $(M_{A}\to A)\to (M_{B}\to B)$ be a morphism of qfs $p$-adically complete pre-log rings, such that $Spf(B)\to Spf(A)$ is surjective and $M_{A}\to M_{B}$ is Kummer. Let further $(M_{A}\to A)\to (M_{C}\to C)$ be any other morphism of qfs pre-log rings. Then $Spf(B\hat{\otimes}_{A}^{log}C)\to Spf(C)$ is also surjective.
\end{lem}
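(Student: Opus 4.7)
The plan is to reduce surjectivity of the map of formal spectra to surjectivity of the corresponding map of ordinary affine schemes modulo $p$, and then to split the resulting morphism into a classical base-change step and a saturation step, each of which is handled by a standard fact.

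First I would observe that since all rings are $p$-adically complete, the underlying topological space of $\mathop{\mr{Spf}}(R)$ is $\Spec(R/p)$, and $p$-adic completion does not change the reduction modulo $p$. It therefore suffices to show that
\[\Spec\bigl((B \otimes_{A} C \otimes_{\Z[Q]}\Z[Q^{\mr{sat}}])/p\bigr) \longrightarrow \Spec(C/p)\]
is surjective, where $Q := P_{B}\oplus_{P_{A}}P_{C}$. I would then factor this map through $\Spec((B\otimes_{A}C)/p)$.

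For the first factor, the hypothesis that $\mathop{\mr{Spf}}(B)\to \mathop{\mr{Spf}}(A)$ is surjective translates to $\Spec(B/p)\to\Spec(A/p)$ being surjective, and $\Spec((B\otimes_{A}C)/p)\to\Spec(C/p)$ is the base change of the latter along $\Spec(C/p)\to\Spec(A/p)$. Since surjectivity of morphisms of schemes is preserved under arbitrary base change, this step is immediate.

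For the second factor, the key observation is that the ring map $\Z[Q]\to\Z[Q^{\mr{sat}}]$ is integral: any $x\in Q^{\mr{sat}}\subset Q^{\mr{gp}}$ satisfies $nx\in Q$ for some $n\geq 1$, so the monomial $[x]$ satisfies $[x]^{n}=[nx]\in\Z[Q]$, and such monomials generate $\Z[Q^{\mr{sat}}]$ as a $\Z[Q]$-module. Integrality is preserved under base change, so $(B\otimes_{A}C)/p \to (B\otimes_{A}C\otimes_{\Z[Q]}\Z[Q^{\mr{sat}}])/p$ is integral, hence surjective on $\Spec$ by lying-over. Composing the two surjective maps yields the claim. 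I do not anticipate any real obstacle; notably the Kummer hypothesis does not enter the surjectivity argument directly, but is presumably needed elsewhere to ensure that the construction stays within the qfs framework.
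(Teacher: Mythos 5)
There is a genuine gap in your second step. The slogan ``integral ring map $\Rightarrow$ surjective on $\Spec$ by lying-over'' is only correct for \emph{injective} integral maps: for an integral map $A\to B$ the image of $\Spec(B)\to\Spec(A)$ is $V(\ker(A\to B))$. Now $Q=P_{B}\oplus_{P_{A}}P_{C}$ is a pushout of monoids and need not be integral, and the saturation $Q\to Q^{\mr{sat}}$ by definition factors through the integralization $Q\twoheadrightarrow Q^{\mr{int}}$ (the image of $Q$ in $Q^{\mr{gp}}$). Hence $\Z[Q]\to\Z[Q^{\mr{sat}}]$ factors through the \emph{quotient} $\Z[Q]\twoheadrightarrow\Z[Q^{\mr{int}}]$, i.e.\ $\Spec(\Z[Q^{\mr{sat}}])\to\Spec(\Z[Q])$ factors through a closed immersion and is in general far from surjective; a fortiori the base-changed map $(B\otimes_{A}C)\to(B\otimes_{A}C)\otimes_{\Z[Q]}\Z[Q^{\mr{sat}}]$ can kill points (concretely, if $q,q'\in Q$ have the same image in $Q^{\mr{gp}}$ but different images in $B\otimes_{A}C$, the relation $[q]-[q']$ becomes a unit locally and the fiber is empty). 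Even in the case where $Q$ happens to be integral, so that $\Z[Q]\to\Z[Q^{\mr{sat}}]$ is injective and integral, your argument as written still applies lying-over to the base-changed map, whose injectivity is not preserved; the correct fix there is to note that surjectivity of $\Spec(\Z[Q^{\mr{sat}}])\to\Spec(\Z[Q])$ is itself stable under base change. But the non-integral case is the real problem and cannot be repaired without using the hypothesis you explicitly set aside.

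Indeed, your closing remark that the Kummer condition ``does not enter the surjectivity argument'' is the tell-tale sign: the statement is false without it. Preservation of surjectivity under saturated base change fails for general morphisms of log schemes (this is why one restricts to exact, e.g.\ Kummer, morphisms), and the Kummer hypothesis is exactly what controls the integralization/saturation step. This is the content of \cref{kat2.4} (Kato's Lemma 2.4 extended to the qfs setting), which is what the paper's own one-line proof invokes after reducing modulo $p^{n}$; there the argument is made pointwise on (log) points, showing that the fiber of the saturated fiber product over a given point of $\Spec(C/p)$ is nonempty, and Kummerness of $P_{A}\to P_{B}$ is used to show that $P_{C}\to Q^{\mr{sat}}$ is again of Kummer type so that the relevant residue-field tensor product does not vanish. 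Your first step (reduction mod $p$ and stability of surjectivity under classical base change) is fine; the argument needs to be completed by a genuine treatment of the saturation step along the lines of \cref{kat2.4}.
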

\begin{proof}
This follows from \cref{kat2.4}.
\end{proof}
We finish the section by recalling some facts about log $p$-divisible groups.
\begin{defn}
Let $S$ be a $p$-adic formal log-scheme. The category $(fin/S)_{f}$ denotes the category of abelian sheaves $G$ on $S_{\mr{kfl}}$, for which there exists a Kummer log-flat covering $S'\to S$, such that $G\vert_{S'}$ is a classical finite locally free formal group scheme over $S'$. We further denote by $(fin/S)_{d}$ the full subcategory of $(fin/S)_{f}$, whose objects consists of those $G\in (fin/S)_{f}$, for which $G$ and its Cartier dual $G^{\vee}$ are representable by formal log-schemes over $S$.
\end{defn}
\begin{rmk}\label{rmk-fin}
The above definition is compatible with the one of Kato (where only the restricted Kummer log-flat topology is used). Namely, let $S=Spf(R)$ be affine and let $G \in (fin/S)_{f}$ be $p^{n}$-torsion, for some $n\geq 1$ so that $G\in (fin/Spec(R/p^{n}))_{f}$ and assume that the log structure of $S$ is fs. It then follows that $G$ is a finite Kummer log-flat group object in the sense of Kato (see \cite[\S 1]{k-ld}). Namely, using \cref{desc-lf}, one sees that $G$ becomes classical over a finitely presented (or indeed even a finite) fs Kummer log-flat covering of $S/p^{n}$. 
\end{rmk}
\begin{defn}
We define the category $(p-div/S)^{\mr{log}}$ of log $p$-divisible groups, whose objects are sheaves of abelian groups on $S_{\mr{kfl}}$, such that
\begin{enumerate}
\item $G=\varinjlim_{n}G_{n}, \textit{with } G_{n}=G[p^{n}]$.
\item The multiplication map $p:G\to G$ is surjective.
\item $G_{n}\in (fin/S)_{f}, \textit{for all } n > 0.$
\end{enumerate}
We further denote by $(p-div/S)^{\mr{log}}_{d}$ the full subcategory, consisting of objects $G$, for which $G_{n}\in  (fin/S)_{d}$, for all $n > 0$.
\end{defn}

\begin{rmk}
If $S=Spf(R)$ for a $p$-adically complete ring $R$, it is easy to see that the category $(p-div/S)^{\mr{log}}$ is equivalent to the category of log $p$-divisible groups over $Spec(R)$. In case the log structure on $S$ is fs, one also sees (as in \cref{rmk-fin}) that the category $(p-div/S)^{\mr{log}}$ agrees with the one defined by Kato in \cite[\S 4.1]{k-ld}. 
\end{rmk}
Log $p$-divisible groups satisfy effective descent for the Kummer log-flat topology.
\begin{lem}
The fibered category
\begin{align*}
&& (R, M_{R})  \longmapsto & (p-div/(Spf(R), M_{R}))^{\mr{log}} & 
\end{align*} 
over bounded $p$-complete log rings is a stack for the $p$-completely Kummer log-flat topology.
\end{lem}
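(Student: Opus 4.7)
The plan is to reduce the stack property to descent of abelian sheaves on the Kummer log-flat site, which is automatic, and then verify that the three defining axioms of a log $p$-divisible group are preserved under descent. So, given a $p$-completely Kummer log-flat covering $(R,M_{R})\to (R',M_{R'})$ and an object $G'\in (p\text{-div}/Spf(R'))^{\log}$ equipped with a descent datum relative to $Spf(R'\hat{\otimes}_{R}^{\log}R')$, I first use sheaf descent for abelian group objects on the Kummer log-flat site of $Spf(R)$ to produce an abelian sheaf $G$ with $G|_{Spf(R')}\cong G'$ compatibly with the descent data. The separation condition (that morphisms form a sheaf) is immediate, since $\underline{\mathrm{Hom}}$ of abelian sheaves on any site is a sheaf.

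It then remains to check that $G$ satisfies the three axioms. Because the formation of $p^{n}$-torsion commutes with sheaf pullback, we have $G[p^{n}]|_{Spf(R')}=G'[p^{n}]$. The natural injection $\varinjlim_{n}G[p^{n}]\hookrightarrow G$ therefore becomes an isomorphism after pullback along the kfl covering $Spf(R')\to Spf(R)$, and is hence an isomorphism. Similarly, surjectivity of $p:G\to G$ is a kfl-local property, and thus follows from the corresponding property for $G'$.

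For the axiom $G[p^{n}]\in (fin/Spf(R))_{f}$, the hypothesis on $G'$ yields a $p$-completely Kummer log-flat covering $Spf(R'')\to Spf(R')$ over which $G'[p^{n}]=G[p^{n}]|_{Spf(R')}$ becomes a classical finite locally free group scheme. Using that Kummer morphisms are stable under composition and base change (\cref{kat2.4}), and that $p$-completely flat maps compose, the composite $Spf(R'')\to Spf(R)$ is again a $p$-completely Kummer log-flat covering, over which $G[p^{n}]$ becomes classical. The only mildly delicate point is this bookkeeping of covering classes; it is handled by the standard permanence properties of Kummer and of $p$-completely flat morphisms, and is not a genuine obstacle.
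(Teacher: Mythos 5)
Your overall skeleton (descend the abelian sheaf, then check the three defining axioms one by one) is the routine part of the argument, but you have skipped the one point that actually requires proof, and it occurs at your very first step. The covering $(R,M_{R})\to(R',M_{R'})$ you are given is only \emph{$p$-completely} Kummer log-flat: by definition this means $R\hat{\otimes}_{\Z[P]}\Z[Q]\to R'$ is $p$-completely flat, not flat, so $Spf(R')\to Spf(R)$ is not a priori a covering in the Kummer log-flat site on which the sheaves $G'$ and $G[p^{n}]$ live. Consequently ``sheaf descent for abelian group objects on the Kummer log-flat site of $Spf(R)$'' does not directly apply; nor do the later steps where you pull back along ``the kfl covering $Spf(R')\to Spf(R)$'' to test that $\varinjlim_{n}G[p^{n}]\to G$ is an isomorphism and that $p$ is surjective; nor the final step, where the composite $Spf(R'')\to Spf(R)$ you produce is again only a $p$-completely Kummer log-flat map, whereas membership in $(fin/Spf(R))_{f}$ asks for an honest Kummer log-flat covering trivializing $G[p^{n}]$.

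The paper's proof consists essentially of the observation that repairs this: reduce to the finite level (an object of $(fin/\cdot)_{f}$ killed by $p^{n}$ lives over $Spec(R/p^{n})$, cf.\ \cref{rmk-fin}), and note that because $(R,M_{R})$ and $(R',M_{R'})$ are \emph{bounded}, the reduction $f/p^{n}$ of a faithfully $p$-completely Kummer log-flat map is faithfully Kummer log-flat in the usual sense. After this reduction all of your verifications go through for the genuine Kummer log-flat covering $Spec(R'/p^{n})\to Spec(R/p^{n})$. So the gap is not fatal, but the boundedness hypothesis and the passage mod $p^{n}$ are exactly the content of the lemma's proof, and your write-up never uses either.
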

\begin{proof}
It is enough to prove the statement for the finite level. So let $f:(A, M_{A})\to (B, M_{B})$ be a faithfully $p$-completely Kummer log-flat morphism of bounded $p$-complete log rings and let $G\in  (fin/(Spf(B), M_{B}))_{f}$, which is $p^{n}$-torsion, for some $n\geq 0$, and is equipped with a descent datum with respect to $f$. Since $(A, M_{A})$ and $(B, M_{B})$ are bounded, the morphism $f/p^{n}$ is faithfully Kummer log flat in the usual sense.
\end{proof}

\section{The log-prismatic site}
The log prismatic site was defined by Koshikawa in \cite{Ko1} (see also the recent preprint \cite{Ko2}). We refer to \cite[\S 2 and \S3]{Ko1} for the fundamental notions of $\delta_{log}$-rings and log-prisms. Note however that in contrast to \cite{Ko1}, we only work with saturated log structures. Moreover we will consider two different topologies on the log prismatic site (one being the classical flat and one being the Kummer log-flat topology).
\begin{defn}
Let $(X, M_{X})$ be a bounded $p$-adic formal qfs-log scheme. The category of log prisms over $X$ consists of qfs log-prisms $(B, M_{B},  J, \delta^{log})$, equipped with a morphism of formal log schemes $f:Spf(B/J)\to X$.\\
The (absolute) log prismatic site $(X, M_{X})_{\prism}$ of $X$ consists of the above category, equipped with the topology where coverings are given morphisms $(B, M_{B}, I, \delta_{B}^{log})\to (C, M_{C}, IC, \delta^{log}_{C})$, which are strict and faithfully $(p, IB)$-completely flat.\\
The category of log prisms over $X$ equipped with the topology where coverings are Kummer faithfully log-flat morphisms is called the (absolute) Kummer log-prismatic site. We denote this site by $(X, M_{X})_{\prism, \mr{kfl}}$.\\
If $X=Spf(R)$ is affine, we also write $(R, M_{R})_{\prism}$ (resp. $(R, M_{R})_{\prism, \mr{kfl}}$) instead of $(X, M_{X})_{\prism}$ (resp. $(X, M_{X})_{\prism, \mr{kfl}}$).
\end{defn}
\begin{rmk}\label{prism-proj}
Since any strict $(p, I)$-completely flat map is $(p, I)$-completely Kummer log-flat, there is a natural projection of sites $\epsilon_{\prism}:(X, M_{X})_{\prism, \mr{kfl}}\to (X, M_{X})_{\prism}$.
\end{rmk}

Let $R$ be a perfectoid ring, with a pseudo-uniformizer $\omega\in R$. Then the prism $(A_{inf}(R), (\xi))$ may be endowed with a log structure $M_{A_{inf}(R)}$, which is induced by the map of monoids $[-]:(R^{\flat}\cap (R^{\flat}[\frac{1}{\omega^{\flat}}])^{\times}\to A_{inf}(R)$, defined by sending an element to its Teichm\"uller lift. By setting $\delta_{log}=0$, we get a log-prism over $R$.
\begin{lem}\label{lem-perf-log}
Let $(A, I):=(A_{inf}(R), I)$ be a perfect prism, for some perfectoid ring $R$, endowed with the log structure $M_{A}$, defined above. Let $(B, J, M_{B})$ be a log prism, such that there exists a map $f:A/I\to B/J$ of log rings. Then there exists a unique morphism $(A, I, M_{A})\to (B, J, M_{B})$ of log prisms, which induces $f$.
\end{lem}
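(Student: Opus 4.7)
The plan is to handle the underlying non-log map and the log structure separately. For the underlying ring map, I would invoke the classical universal property of perfect prisms: since $A = W(R^\flat)$ is perfect, any ring homomorphism $R \to B/J$ lifts uniquely to a Frobenius-equivariant homomorphism $A \to B$, and this lift automatically sends $I$ into $J$, giving a unique map of (non-log) prisms $(A, I) \to (B, J)$ inducing the ring part of $f$. Thus the task reduces to producing a unique map of log structures $M_A \to M_B$ over this ring map, compatible with the $\delta_{\mr{log}}$-structures on both sides.

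The key observation is that $M_A$ is associated to the pre-log structure $P \to A$, $x \mapsto [x]$, where $P := R^\flat \cap (R^\flat[1/\omega^\flat])^\times$ is a perfect (uniquely $p$-divisible) monoid, and that $\delta_{\mr{log},A}=0$ forces the log-Frobenius on $M_A$ to be the exact $p$-th power map. Hence a $\delta_{\mr{log}}$-map $M_A \to M_B$ compatible with Frobenius amounts to a monoid homomorphism $g\colon P \to M_B$ such that (i) $\alpha_B(g(x)) = [x] \in B$, (ii) $g(x)$ reduces to $f([x] \bmod I) \in M_{B/J}$, and (iii) $\phi_{M_B}(g(x)) = g(x^p)$. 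I would construct $g$ by a Frobenius limit: for each $x \in P$ and $n \geq 0$, pick a local lift $m_n \in M_B$ of $f([x^{1/p^n}] \bmod I) \in M_{B/J}$ and set $g(x) := \lim_{n\to\infty}\phi_{M_B}^n(m_n)$. To make the limit sensible, note that two choices of $m_n$ differ by an element of $M_B^\times$ whose image in $B^\times$ lies in $1+J$; applying $\phi_B^n$ then lands in $1+\phi_B^n(J)$, which is contained in $(p,J)^N$ with $N \to \infty$ as $n \to \infty$ (using $\phi_B \equiv (-)^p \pmod{p}$ iteratively). The $(p,J)$-adic completeness of $B$, together with the isomorphism $M_B^\times \xrightarrow{\sim} B^\times$, pins down $g(x)$ canonically.

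Uniqueness follows from the same estimate: any two Frobenius-compatible lifts $g_1([x]), g_2([x])$ of $f([x] \bmod I)$ differ by a unit whose image in $B^\times$ is a $p^n$-th power of an element of $1+J$ for every $n$, hence equals $1$ by $(p,J)$-adic separatedness. The main obstacle I anticipate is the careful bookkeeping needed to convert between the log-Frobenius $\phi_{M_B}$ and the $\delta_{\mr{log},B}$-data---since in general $\phi_{M_B}(m)$ is not literally $m^p$ but differs by a unit corresponding to $1 + p\,\delta_{\mr{log},B}(m)$---and verifying that the Frobenius-limit construction of $g$ automatically respects $\delta_{\mr{log}}$ rather than merely the underlying Frobenius. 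This should work out because the non-uniqueness of the lifts $m_n$ is precisely absorbed by the $\delta_{\mr{log},B}$-correction, so the limit is insensitive to any unit ambiguity compatible with the prescribed $\delta_{\mr{log}}$-structure on $B$.
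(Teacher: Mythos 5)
Your reduction to the monoid part is reasonable, and your uniqueness argument is essentially sound, but the convergence estimate underlying both the existence and the well-definedness of $g$ is false. You claim that for $u\in 1+J$ one has $\phi_B^n(u)\in 1+(p,J)^N$ with $N\to\infty$. In fact $\phi_B(J)\subseteq pB+J^{[p]}$, and the summand $pB$ never leaves $(p,J)^1$ under further iteration. Concretely, take the Breuil--Kisin prism $B=\Z_p[[u]]$, $J=(E(u))$ with $E$ Eisenstein, $\delta(u)=0$: then $\phi_B^n(1+E(u))=1+E(u^{p^n})$, whose constant term is $E(0)=p\cdot(\mathrm{unit})$, so $\phi_B^n(1+E(u))\notin 1+(p,J)^2$ for any $n$. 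Hence the sequence $\phi_{M_B}^n(m_n)$ is not Cauchy and its putative limit is not independent of the choice of lifts; the same problem already breaks the analogous ``Frobenius-limit'' construction of the underlying ring map, which is why Bhatt--Scholze prove the classical universal property of perfect prisms by deformation theory (vanishing of the $(p,I)$-completed cotangent complex) rather than by iterating $\phi$. A further, smaller issue is that in Koshikawa's framework there is no monoid endomorphism $\phi_{M_B}$; one only has $\phi_B(\alpha(m))=\alpha(m)^p(1+p\,\delta_{\mathrm{log}}(m))$, and the correction units $1+p\,\delta_{\mathrm{log}}(m_n)$ are exactly of the non-convergent type above.

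The correct estimate, which your uniqueness paragraph already gestures at, is multiplicative and uses honest $p^n$-th powers: $(1+J)^{p^n}\subseteq 1+(p,J)^{n+1}$ (since $v_p\binom{p^n}{k}=n-v_p(k)$ and $k\geq v_p(k)+1$). So the construction can be repaired by setting $g(x):=\lim_n m_n^{p^n}$, where $m_n\in M_B$ lifts $f([x^{1/p^n}]\bmod I)$; consecutive terms then differ by elements of $(1+J)^{p^n}$, the limit exists by $(p,J)$-completeness of $B^\times=M_B^\times$ and is independent of choices, and one checks $\alpha_B(g(x))$ agrees with the image of $[x]$ by the same binomial estimate. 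Note that this repaired argument, like the paper's, hinges on the unique $p$-divisibility of the monoid $P=R^\flat\cap(R^\flat[1/\omega^\flat])^\times$. The paper instead proves the lemma by showing that the $p$-completed \emph{log} cotangent complex $\hat{L}_{(Q\to A)/(0\to\Z_p)}$ vanishes --- reducing mod $p$ and using that $d\log$ kills $p$-divisible sections so that the log cotangent complex collapses to $L_{R^\flat/\F_p}=0$ --- and then invokes deformation theory; the $\delta_{\mathrm{log}}$-compatibility is deferred to \cref{cor-perf}. Your route, once corrected, is a legitimate and more explicit alternative, but as written the key analytic step does not hold.
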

\begin{proof}
Set $R=A/I$ and fix a pseudouniformizer $p^{\flat}\in R^{\flat}$ in the tilt. Then $M_{A}$ is induced by the chart $[-]:Q=R^{\flat}\cap R^{\flat}[\frac{1}{p^{\flat}}]^{\times}\to A$.
Consider the morphism of pre-log rings $(0\to \Z_p)\to (Q\to A)$. We claim that the $p$-completed log-cotangent complex $\hat{L}_{(Q\to A)/(0\to \Z_p)}$ is trivial. The lemma then follows from deformation theory.\\
Then $\hat{L}_{Q\to A)/(0\to \Z_p)}\otimes^{L}_{\Z_p}\F_p=L_{(Q\to R^{\flat})/\F_p}$. The log structure is again induced by the monoid $Q$.\\
Now if $P^{\bullet}\to (R^{\flat}, M_{R^{\flat}})$ is the canonical free resolution, we see that the log structure of $P^{(i)}$, for any $i\geq 0$, admits a chart $Q^{(i)}$, such that $Q^{(i)gp}$ is $p$-divisible. The log differentials are defined as $\Omega^{1}_{(Q^{(i)}\to P^{(i)})/\F_{p}}:=\Omega^{1}_{P^{(i)}/\F_{p}}\oplus Q^{(i)^{gp}}\otimes_{\Z}P^{(i)}$ modulo the relations $(d\alpha^{i}(m), 0)=(0, \alpha^{i}(m)\otimes m)$, for any $m\in Q^{(i)}$, where $\alpha^{i}$ denotes the map $Q^{(i)}\to P^{(i)}$.\\
Now $R^{\flat}[\frac{1}{p^{\flat}}]^{\times}/(R^{\flat})^{\times}$ is $p$-divisible, so we see that $\Omega^{1}_{(Q^{(i)}\to P^{(i)})/\F_{p}}=\Omega^{1}_{P^{(i)}/\F_{p}}$. Thus, we get that the log cotangent complex coincides with the ordinary cotangent complex and we have $L_{(M_{R^{\flat}}\to R^{\flat})/\F_p}=L_{R^{\flat}/\F_{p}}=0$.
\end{proof}
From the lemma, we see that the log-prismatic site of a perfectoid ring has an initial object.
\begin{cor}\label{cor-perf}
Let $R$ be a perfectoid ring over $\Z_{p}$, endowed with the log structure $R[\frac{1}{p}]^{\times}\cap R\hookrightarrow R$. Then $(Spf(R), M_{R})_{\prism}$ has an initial object given by the log prism $(A_{inf}(R), (\xi), R^{\flat}[\frac{1}{p^{\flat}}]^{\times}\cap R^{\flat})$.
\end{cor}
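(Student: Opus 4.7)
The plan is to obtain this corollary as a direct consequence of \cref{lem-perf-log}. To apply the lemma one first needs to check that, under the identification $A_{\mathrm{inf}}(R)/(\xi)=R$, the log structure on the quotient induced by the chart $Q:=R^{\flat}\cap R^{\flat}[\tfrac{1}{p^{\flat}}]^{\times}\to A_{\mathrm{inf}}(R)$ (Teichm\"uller lifts) coincides, after reduction modulo $\xi$, with the canonical log structure $M_R$ associated to $R[\tfrac{1}{p}]^{\times}\cap R\hookrightarrow R$. The reduction of $[x]$ modulo $\xi$ is the sharp map $x\mapsto x^{\sharp}$, so the image of $Q$ in $R$ is automatically contained in $R[\tfrac{1}{p}]^{\times}\cap R$, which gives a morphism of pre-log structures. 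For $R$ perfectoid every element of $R[\tfrac{1}{p}]^{\times}\cap R$ is, up to a unit of $R$, the sharp of an element of $Q$, so the induced map of associated log structures is an isomorphism.

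Now fix any log prism $(B,J,M_B,\delta_B^{\mathrm{log}})$ in $(\mathop{\mathrm{Spf}}(R),M_R)_{\prism}$. By definition of the log prismatic site, the structure morphism is a map of log formal schemes $\mathop{\mathrm{Spf}}(B/J)\to(\mathop{\mathrm{Spf}}(R),M_R)$, equivalently a morphism of log rings $(R,M_R)\to(B/J,M_{B/J})$. Via the identification of log structures on $R$ from the previous step, this is precisely a morphism of log rings $A_{\mathrm{inf}}(R)/(\xi)\to B/J$. Then \cref{lem-perf-log} produces a unique morphism of log prisms $(A_{\mathrm{inf}}(R),(\xi),M_{A_{\mathrm{inf}}(R)})\to(B,J,M_B)$ extending it, which is precisely the assertion that $(A_{\mathrm{inf}}(R),(\xi),M_{A_{\mathrm{inf}}(R)})$ is an initial object of $(\mathop{\mathrm{Spf}}(R),M_R)_{\prism}$.

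The only delicate point is the comparison of the two log structures on $R$; once this identification is in place the existence and uniqueness of the lift are formal from \cref{lem-perf-log}, since the lemma already absorbs the deformation-theoretic input through the vanishing of the $p$-completed log cotangent complex.
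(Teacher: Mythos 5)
Your reduction to \cref{lem-perf-log} is the right starting point, and your verification that the Teichm\"uller chart $Q=R^{\flat}\cap R^{\flat}[\frac{1}{p^{\flat}}]^{\times}$ reduces modulo $\xi$ to the canonical log structure $R[\frac{1}{p}]^{\times}\cap R$ is a worthwhile observation that the paper leaves implicit. However, there is a genuine gap: you declare that once the log structures are identified, ``the existence and uniqueness of the lift are formal from \cref{lem-perf-log}.'' This is not the case, because an object of $(Spf(R), M_{R})_{\prism}$ is a quadruple $(B, J, M_{B}, \delta_{\mathrm{log}})$ and a morphism in this site must be compatible with the $\delta_{\mathrm{log}}$-structures on the monoids. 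The deformation-theoretic argument in \cref{lem-perf-log} only produces a unique map of log rings compatible with the prism structures; it says nothing about $\delta_{\mathrm{log}}$. Since $\delta_{\mathrm{log}}$ is set to $0$ on $M_{A_{\mathrm{inf}}(R)}$, one must still check that $\delta_{\mathrm{log}}^{B}$ vanishes on the image of $M_{A_{\mathrm{inf}}(R)}$ in $M_{B}$, and this is the actual content of the paper's proof of the corollary beyond the lemma.

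The paper closes this gap with a separate claim: if a section $x\in M_{B}$ admits a $p^{n}$-th root in $M_{B}$ for every $n\geq 1$, then $\delta_{\mathrm{log}}(x)=0$. The proof uses the identity $\delta_{\mathrm{log}}(xy)=\delta_{\mathrm{log}}(x)+\delta_{\mathrm{log}}(y)+p\,\delta_{\mathrm{log}}(x)\delta_{\mathrm{log}}(y)$ to show that $\delta_{\mathrm{log}}(z^{p})$ is divisible by $p\,\delta_{\mathrm{log}}(z)$, whence inductively $\delta_{\mathrm{log}}(x)$ is divisible by $p^{n}$ for all $n$, and hence vanishes by the classical $p$-completeness of the bounded ring $B$. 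This applies here because $R^{\flat}$ is perfect, so every element of $Q$ admits all $p$-power roots, and the same is then true of the images in $M_{B}$. Without some argument of this kind your proposed map is only a morphism of log prisms in the weaker sense of log rings with prism structure, not a morphism in the log prismatic site, so initiality has not been established.
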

\begin{proof}
Let $(B, J, M_{B}, \delta_{log})$ be a log-prism. Using the lemma above, we obtain a unique map $(A_{inf}(R), M_{A_{inf}(R)})\to (B, M_{B})$ of log-rings, which is also a map of prisms. One still needs to check that this is also compatible with the $\delta_{log}$-structures. This follows from the following.
\begin{claim}
Let $(B, I, M_{B}, \delta_{log})$ be a log prism and assume that there is a section $x\in M_{B}$, such that there exists a $p^{n}$-th root $x^{\frac{1}{p^{n}}}\in M_{B}$ for any $n\geq 1$. Then we have $\delta_{log}(x)=0$.
\end{claim}
Namely, by the definition of a $\delta_{log}$-structure, $\delta_{log}$ satisfies
\[\delta_{log}(xy)=\delta_{log}(x)+\delta_{log}(y)+p\delta_{log}(x)\delta_{log}(y)\]
for any $x, y\in M_{B}$. We then claim that for any $z\in M_{B}$, it follows that $\delta_{log}(z^{p})$ is divisible by $p\delta_{log}(z)$. Namely, we get
\begin{align*}
& \delta_{log}(z^{p}) & = & \delta_{log}(z)+\delta_{log}(z^{p-1})+p\delta_{log}(z)\delta_{log}(z^{p-1})\\
& & = & 2\delta_{log}(z)+\delta_{log}(z^{p-2})+p\delta_{log}(z)(\delta_{log}(z^{p-2})+\delta_{log}(z^{p-1})) \\
& & = & \cdots\\
& & = & p\delta_{log}(z)+p\delta_{log}(z)(\sum\limits_{k=1}^{p-1}\delta_{log}(z^{p-k}))
\end{align*}
If $x$ admits a $p^{n}$-th root for any $n\geq 1$, we thus see inductively that $\delta_{log}(x)$ is divisible by $p^{n}$ for all $n\geq 1$. But since $B$ is bounded, it is classically $p$-complete, so we get that $\delta_{log}(x)=0$.
\end{proof}
\section{The Kummer quasisyntomic site}
Let $P\to R$ be a pre-log ring over $\Z_p$. 
\begin{defn}
A morphism $f:(A, M_{A})\to (B, M_{B})$ of bounded $p$-complete qfs log rings is called Kummer log-quasisyntomic, if there exists a Kummer-type chart $\alpha:P\to Q$ for $f$, with $P$, $Q$ bounded monoids, such that the induced map $A\hat{\otimes}_{\Z[P]}\Z[Q]\to B$ is quasisyntomic.
\end{defn}
For any $p$-adically complete log formal scheme $X$, it again follows from \cref{surjective} that there exists a site, called the Kummer log-quasisyntomic site, where coverings are given by faithfully log-quasisyntomic maps. 

\begin{rmk}\label{rmk-qsyn}
In \cite[\S 3]{Ko2} a different notion of log-quasisyntomic maps has been introduced, using the log-cotangent complex. In general these two notions will differ - for example, if $(A, M_{A})\to (B, M_{B})$ is a log-quasisyntomic morphism in our sense, the underlying map $A\to B$ need not be flat. However, when defining the small Kummer log-quasisyntomic site over $\mathcal{O}_{K}$, these differences do not appear - i.e. in that case one may use either notion of log-quasisyntomic maps.
\end{rmk}

\begin{prop}
Let $\Phi:(R, M_{R})\to (B, M_{B})$ be a Kummer log-quasisyntomic morphism of $p$-complete log rings, and let $P\to M_{R}$ be a chart, such that $P^{\times}=\{1\}$. Then for any point $x\in Spf(B)$, there exists a commutative diagram
\begin{center}
$\xymatrix{
(P\to R)\ar[r]^{\phi}\ar[d] & (Q\to B)\ar[d]^{f}\\
(\varinjlim\limits_{n}R\otimes_{\Z[P]}\Z[P^{\frac{1}{n}}])_{p}^{\wedge}\ar[r]^{g} & (Q'\to C)
}$
\end{center}
of pre-log rings, where $\phi$ induces the map $\Phi$, and such that $f$ is Kummer log-quasisyntomic, $g$ is classically quasisyntomic and $x$ lies in the image of $Spf(C)\to Spf(B)$.
\end{prop}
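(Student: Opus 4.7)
The plan is to imitate the proof of Proposition~\ref{kfl-loc} essentially verbatim, with ``(classically) flat'' replaced by ``(classically) quasisyntomic'' throughout. The key inputs---stability of quasisyntomicity under base change, existence of a Kummer chart compatible with the prescribed $P$, and the formal manipulation using the $P^{\frac{1}{n}}$-towers---all carry over from the flat setting with only cosmetic changes.

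My first step would be to establish a quasisyntomic analogue of \cref{chart-form}: given the fixed chart $P\to M_{R}$, after a classically quasisyntomic (or even just classically flat) localization of $B$ around $x$, I can find a Kummer chart $\alpha:P\to Q$ of $M_{R}\to M_{B}$ such that $R\hat\otimes_{\Z[P]}\Z[Q]\to B$ is quasisyntomic. This runs parallel to the adaptation of \cite[Lem.~1.11]{Kat2} used for \cref{chart-form}: the standard pushout trick produces $Q$, and the quasisyntomic hypothesis is transported along base change.

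With $\alpha$ in hand, the rest of the construction mirrors \cref{kfl-loc}. Since $P^{\times}=\{1\}$ and $P$ is integral and saturated, $P^{\mr{gp}}$ is torsion free; the Kummer property then forces $Q^{\mr{gp}}\hookrightarrow P^{\mr{gp}}\otimes_{\Z}\Q$, so that $Q^{\mr{gp}}$ is torsion free and the map $P\to \varinjlim_{n}P^{\frac{1}{n}}$ factors through $Q$. I then set
\[C:=B\hat\otimes_{R\hat\otimes_{\Z[P]}\Z[Q]}(R\hat\otimes_{\Z[P]}\varinjlim_{n}\Z[P^{\frac{1}{n}}])^{\wedge}_{p},\]
equipped with the log-structure induced by the chart $Q':=\varinjlim_{n}P^{\frac{1}{n}}\to C$. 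The map $g$ is the base change of the classically quasisyntomic map $R\hat\otimes_{\Z[P]}\Z[Q]\to B$, hence is itself classically quasisyntomic. Using the telescoping identification $C\cong B\hat\otimes_{\Z[Q]}\Z[Q']$ together with the observation that $Q\to Q'$ is Kummer (because every $q'\in Q'$ satisfies $(q')^{n}\in P\subset Q$ for some $n$), I conclude that $f$ is Kummer log-quasisyntomic.

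For the surjectivity $x\in \mr{im}(Spf(C)\to Spf(B))$, I would use that $\Z[Q]\to \Z[Q']$ is integral, hence $Spec(\F_{p}[Q'])\to Spec(\F_{p}[Q])$ is surjective by going-up; base-changing to $Spec(B/p)$ preserves surjectivity, and for $p$-adically complete rings $Spf$ agrees with $Spec$ of the mod-$p$ reduction. The main obstacle will be the initial chart-form step: producing a Kummer chart of $M_{R}\to M_{B}$ that starts with the prescribed $P$ and for which the base change remains quasisyntomic requires carefully propagating quasisyntomicity through the chart manipulations, but no genuinely new ideas beyond those in \cite[Lem.~1.11]{Kat2} are needed.
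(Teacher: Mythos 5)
Your proposal is correct and takes essentially the same approach as the paper, whose entire proof is the single line ``This follows in the same way as \cref{kfl-loc}.'' Your write-up simply supplies the details of that adaptation (the quasisyntomic analogue of \cref{chart-form}, the $p$-completed base change giving $g$, and surjectivity via integrality of $\Z[Q]\to\Z[Q']$), and these all check out.
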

\begin{proof}
This follows in the same way as \cref{kfl-loc}.
\end{proof}

\section{The log prismatic site over $\mathcal{O}_{K}$}
We will now specialize the discussion to the case of a complete discrete valuation ring $\mathcal{O}_{K}$ of mixed characteristic. Recall that we define $C:=\hat{\bar{K}}$.
\begin{lem}\label{strictness}
\begin{enumerate}
\item Any $p$-completely Kummer log-flat map $f:(\mathcal{O}_{C}, M_{\mathcal{O}_{C}})\to (R, M_{R})$ is strict, in particular classically $p$-completely flat.
\item Any Kummer log-flat map of prisms $(A_{inf}, (\xi), M_{A_{inf}})\to (A, \xi A,  M_{A})$ is strict.
\end{enumerate}
\end{lem}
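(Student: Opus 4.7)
The key observation is that both $\bar{M}_{\mathcal{O}_C}$ (the quotient $M_{\mathcal{O}_C}/\mathcal{O}_C^{\times}$) and $\bar{M}_{A_{inf}}$ are uniquely divisible, namely isomorphic to $\mathbb{Q}_{\geq 0}$. This follows because $C = \hat{\bar{K}}$ is algebraically closed with value group $\mathbb{Q}$, and its tilt $C^{\flat}$ inherits the same property, so the Teichm\"uller chart $R^{\flat} \cap (R^{\flat}[1/p^{\flat}])^{\times} \to A_{inf}$ also has sharpification $\mathbb{Q}_{\geq 0}$.

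For part (1), I would first establish strictness by proving that $\bar{M}_{\mathcal{O}_C} \to \bar{M}_R$ is an isomorphism. A useful auxiliary fact is that for any sharp integral saturated monoid $N$, the group $N^{\mathrm{gp}}$ is torsion-free: any torsion $t \in N^{\mathrm{gp}}$ satisfies $nt = 0 \in N$, so by saturation both $t$ and $-t$ lie in $N$, making $t$ a unit, hence $t = 0$ by sharpness. Applied to $\bar{M}_R$ together with the Kummer hypothesis, this gives an injection of $\mathbb{Q} = \bar{M}_{\mathcal{O}_C}^{\mathrm{gp}}$ into the torsion-free group $\bar{M}_R^{\mathrm{gp}}$ with torsion cokernel; by divisibility of $\mathbb{Q}$ any such inclusion must be an isomorphism. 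Comparing positive cones (a hypothetical negative element of $\bar{M}_R$ inside $\mathbb{Q}$ would, together with its opposite coming from $\bar{M}_{\mathcal{O}_C}$, yield a nontrivial unit in $\bar{M}_R$, contradicting sharpness) then forces $\bar{M}_{\mathcal{O}_C} \cong \bar{M}_R$, so $f$ is strict.

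To upgrade strictness to classical $p$-complete flatness, I would argue that any Kummer chart $\alpha: P \to Q$ of $f$ decomposes, after a classically flat localization on $B$ as permitted by \cref{chart-form}, as $Q \cong P \times T$ with $T$ a torsion group mapping into $B^{\times}$; this comes from the splitting $Q^{\mathrm{gp}} \cong P^{\mathrm{gp}} \oplus T$ obtained from the sharp-level isomorphism and the observation that torsion elements of $Q^{\mathrm{gp}}$ relative to $P^{\mathrm{gp}}$ must be units after the strictness has absorbed them into $B^{\times}$. With this decomposition, $\mathbb{Z}[P] \to \mathbb{Z}[Q] \cong \mathbb{Z}[P] \otimes_{\mathbb{Z}} \mathbb{Z}[T]$ is free and hence classically flat, so $\mathcal{O}_C \to \mathcal{O}_C \hat{\otimes}_{\mathbb{Z}[P]} \mathbb{Z}[Q]$ is $p$-completely flat; composing with the hypothesized flatness of $\mathcal{O}_C \hat{\otimes}_{\mathbb{Z}[P]} \mathbb{Z}[Q] \to R$ yields $p$-complete flatness of $\mathcal{O}_C \to R$.

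Part (2) follows by the identical argument applied to $(A_{inf}, M_{A_{inf}})$, using $\bar{M}_{A_{inf}} \cong \mathbb{Q}_{\geq 0}$; the strict chart $P \to Q$ of prisms is handled the same way, with the $(p, I)$-complete topology in place of the $p$-complete topology. The main obstacle is the chart-level decomposition $Q \cong P \times T$ after strictness: it requires carefully relating the torsion of $Q^{\mathrm{gp}}/P^{\mathrm{gp}}$ to genuine chart elements whose images lie in $B^{\times}$, and it is here that a localization on $B$ may be needed.
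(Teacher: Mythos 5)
Your proof is correct and follows essentially the same route as the paper's: both arguments exploit that $\overline{M}_{\mathcal{O}_{C}}^{\mathrm{gp}}\cong\mathbb{Q}$ is divisible while the saturated, sharpified target monoid has torsion-free group, so the Kummer condition forces an isomorphism on sharpifications and hence strictness. The only differences are cosmetic --- the paper reduces to a torsion-free chart $P$ with $P^{\mathrm{gp}}=\mathbb{Q}$ and observes $P\in\{\mathbb{Q}_{\geq 0},\mathbb{Q}\}$, whereas you work with $\overline{M}$ directly and, unlike the paper (which treats this as immediate), spell out why strictness gives classical $p$-complete flatness via the decomposition $Q\cong P\times Q^{\times}$, which is indeed available because the chart map $\alpha$ itself splits $Q^{\mathrm{gp}}\to \overline{Q}^{\mathrm{gp}}$ once $\overline{\alpha}$ is known to be an isomorphism.
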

\begin{proof}
Let $\alpha:\Q_{\geq 0}\hookrightarrow P$ be a chart for $f$, which is Kummer and where $P\to M_{R}$ is a chart for the log structure of $R$. Since $\Q_{\geq 0}$ is torsion-free and $P$ is saturated, we may also assume that $P$ is torsion free. Since $\alpha$ is Kummer, we then have $P=\Q=\Q_{\geq 0}^{\mr{gp}}$, hence $P=\Q_{\geq 0}$ or $P=\Q$, since $\Q$ contains no bigger proper submonoid than $\Q_{\geq 0}$. In both cases, this means that $M_{R}$ is induced by $\Q_{\geq 0}$, which means that $f$ is strict.\\
Part (2) follows in the same way.
\end{proof}

\begin{lem}\label{weakly-initial}
The log prism $(A_{inf}, (\xi), \mathcal{O}_{C}^{\flat}\backslash \{0\})$ is a weakly initial object of $(\mathcal{O}_{K}, M_{\mathcal{O}_{K}})_{\prism_{\mr{kfl}}}$.
\end{lem}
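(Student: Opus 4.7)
The natural plan is to reduce weak initiality over $\mathcal{O}_{K}$ to Corollary 4.2, which establishes strict initiality of the log prism $(A_{inf}(\mathcal{O}_{C}), (\xi), \mathcal{O}_{C}^{\flat}[\tfrac{1}{p^{\flat}}]^{\times}\cap \mathcal{O}_{C}^{\flat})$ inside $(\mathcal{O}_{C}, M_{\mathcal{O}_{C}})_{\prism}$. Note that since $C^{\flat}$ is a field, the chart monoid in Corollary 4.2 equals $\mathcal{O}_{C}^{\flat}\setminus\{0\}$, matching the log structure in the statement. Thus, given any object $(B, J, M_{B}, \delta_{log})$ of $(\mathcal{O}_{K}, M_{\mathcal{O}_{K}})_{\prism, \mr{kfl}}$, it suffices to construct a Kummer faithfully log-flat covering $(B, J, M_{B})\to (B'', J'', M_{B''})$ such that $(B'', J'', M_{B''})$ lies in $(\mathcal{O}_{C}, M_{\mathcal{O}_{C}})_{\prism}$. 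Corollary 4.2 then produces the required map $A_{inf}(\mathcal{O}_{C})\to B''$ of log prisms, establishing weak initiality.

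I would build this covering in two stages. Stage one: form the log-prismatic base change of $(B, J, M_{B})$ along the Kummer chart extension $\mathbb{N}\hookrightarrow \mathbb{Q}_{\geq 0}$ adjoining all $n$-th roots of the uniformizer $\pi$. Concretely,
\[ B_{1}:=\bigl(B\hat{\otimes}_{\Z_{p}[\mathbb{N}]}\Z_{p}[\mathbb{Q}_{\geq 0}]\bigr)^{\wedge}_{(p,J)}, \]
endowed with the log structure induced by $\mathbb{Q}_{\geq 0}$ and $\delta_{log}$ extended by zero on the newly adjoined $p$-divisible generators (this extension is forced, by the Claim appearing in the proof of Corollary 4.2). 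By \cref{surjective}, the morphism $(B, J, M_{B})\to (B_{1}, J_{1}, M_{B_{1}})$ is a Kummer faithfully log-flat cover in our site, and $B_{1}/J_{1}$ contains the perfectoid ring $\mathcal{O}_{K,\infty}^{\wedge}:=(\mathcal{O}_{K}[\pi^{1/\infty}])^{\wedge}_{p}$. Stage two: the map $\mathcal{O}_{K,\infty}^{\wedge}\to \mathcal{O}_{C}$ is classically $p$-completely faithfully flat, and applying $A_{inf}$ yields a $(p,\xi)$-completely faithfully flat morphism of perfect prisms $A_{inf}(\mathcal{O}_{K,\infty}^{\wedge})\to A_{inf}(\mathcal{O}_{C})$. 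Corollary 4.2, applied to $\mathcal{O}_{K,\infty}^{\wedge}$, provides a map $A_{inf}(\mathcal{O}_{K,\infty}^{\wedge})\to B_{1}$ of log prisms; I would set
\[ B'':=B_{1}\hat{\otimes}_{A_{inf}(\mathcal{O}_{K,\infty}^{\wedge})}A_{inf}(\mathcal{O}_{C}), \]
with induced log structure containing $\mathcal{O}_{C}^{\flat}\setminus\{0\}$. This should produce a strict $(p,J_{1})$-completely faithfully flat cover $B_{1}\to B''$, and the resulting log prism sits naturally over $(\mathcal{O}_{C}, M_{\mathcal{O}_{C}})$.

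I expect the principal obstacle to be stage two: verifying that $B''$ is genuinely a log prism with $\delta$-structure and log structure inherited correctly from the tensor product, and that the induced map $\mathcal{O}_{C}\to B''/J''$ is compatible with the divisible log structure on $\mathcal{O}_{C}$. This ultimately relies on functoriality of Fontaine's $A_{inf}$ for $p$-completely flat maps of perfectoid rings, combined with Koshikawa's machinery of log-prismatic envelopes from \cite{Ko1, Ko2}. The remaining verifications -- Kummer log-flatness and surjectivity of stage one via \cref{surjective}, and the final application of Corollary 4.2 -- are then essentially formal.
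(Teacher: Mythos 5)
Your overall strategy --- produce a Kummer log-flat cover of an arbitrary log prism $(B,J,M_{B})$ whose quotient lands over $(\mathcal{O}_{C},\Q_{\geq 0})$ and then invoke the perfectoid case (\cref{cor-perf}) --- matches the paper's. The gap is in Stage one. The completed base change $B_{1}=(B\hat{\otimes}_{\Z_{p}[\N]}\Z_{p}[\Q_{\geq 0}])^{\wedge}_{(p,J)}$ does not in general underlie a log prism receiving a map from $(B,J,M_{B},\delta_{log})$: you cannot ``extend $\delta_{log}$ by zero''. The Claim in the proof of \cref{cor-perf} that you cite is an \emph{obstruction} here, not a construction. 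On $B_{1}$ the chart monoid is divisible, so $\delta_{log}$ of the image of $1\in\N$ is forced to vanish; but compatibility with the $\delta_{log}$-structure of $B$ forces it to equal the image of $\delta_{log}^{B}(1)$, and since $\Z_{p}[\N]\to\Z_{p}[\Q_{\geq 0}]$ is flat, the map $B\to B_{1}$ does not kill that element. For a concrete failure, take $\mathfrak{S}=W(k)[[u]]$ with chart $\N\to\mathfrak{S}$, $1\mapsto u$, and Frobenius lift $\phi(u)=u^{p}(1+p)$, so that $\delta_{log}(1)=1$: this is a legitimate object of $(\mathcal{O}_{K}, M_{\mathcal{O}_{K}})_{\prism, \mr{kfl}}$, yet $W(k)[[u]][u^{1/p^{\infty}}]^{\wedge}$ admits no compatible $\delta_{log}$-structure. (A secondary omission: $P\oplus_{\N}\Q_{\geq 0}$ need not be saturated --- compare \cref{rmk-prod} --- so a saturation step is needed in any case.)

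The paper avoids exactly this trap by never placing the enlarged monoid on the prism naively: it first passes to a classical $(p,I)$-completely flat cover $\tilde{A}$ of $A$ admitting a map from $A_{inf}$ (classical weak initiality of $A_{inf}$), enlarges the log structure only on the quotient $\tilde{A}/I$ (where there is no $\delta_{log}$ to be compatible with) via the pushout $P\oplus_{\N}\Q_{\geq 0}$, and then invokes Koshikawa's exactification \cite[Constr. 2.17]{Ko1} and saturation \cite[Prop. 2.16]{Ko1} to manufacture an honest log prism $\tilde{A}'\hat{\otimes}_{\Z[P']}\Z[(P')^{\mr{sat}}]$ that is Kummer log-flat over $A$; the map from $A_{inf}$ is then obtained from the deformation-theoretic \cref{lem-perf-log}, not from a universal property over $\mathcal{O}_{K,\infty}$. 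Your Stage two (flat base change of perfect prisms along $A_{inf}(\mathcal{O}_{K,\infty})\to A_{inf}(\mathcal{O}_{C})$) is reasonable and could be grafted onto a corrected Stage one, but as written the argument does not get off the ground without the exactification/envelope machinery.
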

\begin{proof}
Let $(A, I, M_{A})$ be a log-prism over $\mathcal{O}_{K}$ and assume that there is a chart $P\to M_{A}$, and a chart $\N\to P$, for the map of log rings $(\mathcal{O}_{K}, M_{\mathcal{O}_{K}})\to (A/I, M_{A})$. Since $A_{inf}$ is a weakly initial object for the classical prismatic site, there is a flat covering $(A, I)\to (\tilde{A}, I\tilde{A})$, such that there exists a map of prisms $A_{inf}\to \tilde{A}$. We then endow $\tilde{A}/I$ with the log structure induced by $\tilde{P}=P\oplus_{\N}\Q_{\geq 0}\to \tilde{A}/I$, so in particular the map $\mathcal{O}_{C}\to \tilde{A}/I$, becomes a map of log-rings. Now first endow $\tilde{A}$ with the log structure induced from $P\to A\to \tilde{A}$. Then $(\tilde{A}, P)\to (\tilde{A}/I, \tilde{P})$ is a map of pre-log rings. Let then 
$(\tilde{A}', I', P')$ be the exactification of $(\tilde{A}, P)$ (see \cite[Constr. 2.17]{Ko1}), i.e. 
$(\tilde{A}', I', P')$ is a log prism with $\tilde{A}'/I'=\tilde{A}/I$ and the log structure on $\tilde{A}/I$ is induced by $P'$. Then by \cref{lem-perf-log}, the map $(\mathcal{O}_{C}, \Q_{\geq 0})\to (\tilde{A}/I, P')$ lifts to a unique map of log-prisms $A_{inf}\to \tilde{A}'$. By \cite[Prop. 2.16]{Ko1}, the saturation $\tilde{A}'\hat{\otimes}_{\Z[P']}\Z[(P')^{\mr{sat}}]$ may be endowed with a log prism structure and the map $A\to \tilde{A}'\hat{\otimes}_{\Z[P']}\Z[(P')^{\mr{sat}}]$ is a Kummer log-flat map of log-prisms. This finishes the proof.
\end{proof}

Recall from \cite[\S 4.1]{ALB} that there is a functor of topoi
$v_{*}:(\mathcal{O}_{K})_{\prism}^{\sim}\to (\mathcal{O}_{K})_{qsyn}^{\sim}$. This extends to the log case.

\begin{prop}\label{topoi-diagram}
There is a functor $\mu:(\mathcal{O}_{K}, M_{\mathcal{O}_{K}})_{\prism, \mr{kfl}}^{\sim}\to (\mathcal{O}_{K}, M_{\mathcal{O}_{K}})_{\mr{kqsyn}}^{\sim}$, which fits into a commutative diagram 
\begin{center}
$\xymatrix{
(\mathcal{O}_{K}, M_{\mathcal{O}_{K}})_{\prism, \mr{kfl}}^{\sim}\ar[r]\ar[d] & (\mathcal{O}_{K}, M_{\mathcal{O}_{K}})_{\mr{kqsyn}}^{\sim}\ar[d]\\
(\mathcal{O}_{K})_{\prism}^{\sim}\ar[r] & (\mathcal{O}_{K})_{qsyn}^{\sim}
}$
\end{center}
where the vertical arrows denote the projections from the log topoi to their classical counterparts and the lower functor is the functor $v_{*}$ constructed in \cite[\S 4.1]{ALB}.
\end{prop}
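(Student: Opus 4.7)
The plan is to imitate the construction of $v_{*}$ in \cite[\S 4.1]{ALB}. Classically, $v_{*}$ is defined by restriction to the basis of quasi-regular semiperfectoid rings in $(\mathcal{O}_{K})_{qsyn}$, each of which carries an initial prism. For the log version we take as basis the Kummer log-quasi-regular log-semiperfectoid rings, and we replace the initial prism by a log-prismatic envelope built from the weakly initial object of \cref{weakly-initial}.

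First, I would single out a basis $\mc{B}$ of $(\mathcal{O}_{K}, M_{\mathcal{O}_{K}})_{\mr{kqsyn}}$ consisting of bounded qfs $p$-complete log rings $(S, M_{S})$ over $\mathcal{O}_{K}$ that admit a Kummer log-quasisyntomic cover by a log-perfectoid ring $(T, M_{T})$ with divisible log structure. The log-quasisyntomic analogue of \cref{kfl-loc} proved just above ensures such covers exist locally, so $\mc{B}$ generates the topology. For any $(S, M_{S})\in \mc{B}$ with chosen cover $(T, M_{T})$, by \cref{cor-perf} the log-perfect prism $(A_{\mr{inf}}(T), (\xi_{T}), M_{A_{\mr{inf}}(T)})$ is initial in the log-prismatic site of $(T, M_{T})$. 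I would define $\prism^{\mr{log}}_{(S, M_{S})}$ to be the log-prismatic envelope of $A_{\mr{inf}}(T)\twoheadrightarrow T\twoheadrightarrow S$, constructed via exactification followed by saturation as in \cite[Constr. 2.17, Prop. 2.16]{Ko1}. Independence from the chosen cover is a standard \v{C}ech argument: any two log-perfectoid covers admit a common log-perfectoid refinement given by their saturated $p$-completed tensor product, and the associated envelopes are canonically isomorphic. Functoriality on $\mc{B}$ then follows from the universal property of the envelope.

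Next, for $\mc{F}\in (\mathcal{O}_{K}, M_{\mathcal{O}_{K}})_{\prism, \mr{kfl}}^{\sim}$ and $(S, M_{S})\in \mc{B}$, I would set
\[\mu(\mc{F})(S, M_{S}) := \mc{F}(\prism^{\mr{log}}_{(S, M_{S})}),\]
and extend by sheafification. To see that this defines a sheaf on $\mc{B}$, it suffices to check that a Kummer log-quasisyntomic cover $(S, M_{S})\to (S', M_{S'})$ in $\mc{B}$ yields a faithfully Kummer log-flat cover $\prism^{\mr{log}}_{(S, M_{S})}\to \prism^{\mr{log}}_{(S', M_{S'})}$ of log-prisms; flatness is inherited from the flatness of the envelope construction, and surjectivity follows from \cref{surjective}.

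Finally, commutativity of the diagram is checked on the basis $\mc{B}$: both compositions send $\mc{F}$ to the sheaf $S\mapsto \mc{F}(\prism^{\mr{log}}_{(S, M_{S})})$ on the classical quasisyntomic site, since the vertical projections simply forget the log structure. The main obstacle I anticipate is showing that the underlying classical prism of $\prism^{\mr{log}}_{(S, M_{S})}$ is canonically isomorphic to the classical prismatic envelope $\prism_{S}$ used in \cite[\S 4.1]{ALB}. This should come down to the observation that over the log-perfectoid base $(T, M_{T})$, whose monoid chart is already divisible (hence saturated) on $A_{\mr{inf}}(T)$, the exactification and saturation steps of \cite[Constr. 2.17, Prop. 2.16]{Ko1} do not alter the underlying ring beyond what happens classically, so the forgetful functor identifies log and non-log envelopes.
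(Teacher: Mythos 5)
Your route is genuinely different from the paper's: you try to build $\mu$ pointwise, by evaluating $\mathcal{F}$ at an initial log prism attached to each member of a basis of ``log-qrsp'' objects, whereas the paper constructs $\mu$ formally from a cocontinuous functor $u:(A,I,M_A)\mapsto A/I$ defined on the full subcategory of log prisms whose reduction is $p$-completely Kummer log-flat over $\mathcal{O}_{K}$, composed with the restriction $\nu_{*}$ to that subcategory. The single piece of hard content in the paper's proof is the Claim that any Kummer log-quasisyntomic cover of $A/I$ lifts, after a $(p,I)$-completely faithfully flat extension of the log prism, to a cover of log prisms; this is proved by first passing to $\mathcal{O}_{C}$, invoking \cref{strictness} to make everything strict, and then applying \cite[Prop.\ 7.11]{BS-pris} together with exactification and saturation. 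That step, or something equivalent to it, does not appear in your proposal, and you would need it anyway to verify your sheaf condition.

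The concrete gaps are these. First, you assert that the log-prismatic envelope $\prism^{\mr{log}}_{(S,M_S)}$ is well defined and functorial, with independence of the chosen log-perfectoid cover following from ``a standard \v{C}ech argument''; but without establishing that this object is \emph{initial} in the log-prismatic site of $(S,M_S)$ there is no such argument, and the paper only proves initiality for the specific rings $\mathcal{O}_{C}^{(n),\mr{log}}$ (\cref{initial}), using in an essential way that every Kummer log-flat map out of a prism with divisible log structure is strict (\cref{strictness}). Second, the sheaf condition for $\mu(\mathcal{F})$ on a cover $(S,M_S)\to(S',M_{S'})$ requires identifying the envelope of the \emph{log} self-product $S'\hat{\otimes}^{\mr{log}}_{S}S'$ with the self-coproduct of envelopes in the log-prismatic site (the content of \cref{copr} in the special case); you do not address this. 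Third, and most seriously, the ``main obstacle'' you flag is resolved incorrectly: it is not true that exactification and saturation leave the underlying ring unchanged. Already $\mathcal{O}_{C}^{(2),\mr{log}}$ differs from $\mathcal{O}_{C}\hat{\otimes}_{\mathcal{O}_{K}}\mathcal{O}_{C}$ by the extra factor $\hat{\otimes}_{\Z[P]}\Z[P^{\mr{sat}}]$ (\cref{rmk-prod}), so the underlying prism of a log-prismatic envelope is in general \emph{not} the classical prismatic envelope of the underlying ring. The commutativity of the square therefore cannot be checked by the identification you propose; in the paper it is instead essentially built into the construction, since $u$ restricted to classical prisms recovers the functor of \cite[\S 4.1]{ALB}.
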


\begin{proof}
Consider the full subcategory $(\mathcal{O}_{K}, M_{\mathcal{O}_{K}})_{\prism. \mr{kfl}}^{\mr{s}}\subset (\mathcal{O}_{K}, M_{\mathcal{O}_{K}})_{\prism. \mr{kfl}}$, consisting of log prisms $(A, I, M_{A})$, such that the structure map $\mathcal{O}_{K}\to A/I$ is $p$-completely Kummer log-flat. We endow $ (\mathcal{O}_{K}, M_{\mathcal{O}_{K}})_{\prism. \mr{kfl}}^{\mr{s}}$ with the Grothendieck topology where again coverings are faithfully $(p, I)$-completely Kummer log-flat maps of log-prisms.  Consider the morphism of topoi $\nu_{*}:(\mathcal{O}_{K}, M_{\mathcal{O}_{K}})_{\prism. \mr{kfl}}^{\sim}\to (\mathcal{O}_{K}, M_{\mathcal{O}_{K}})_{\prism. \mr{kfl}}^{\mr{s}, \sim}$, defined by $\nu_{*}\mathcal{F}((A, I, M_{A})):=\mathcal{F}((A, I, M_{A}))$, for any $\mathcal{F}\in (\mathcal{O}_{K}, M_{\mathcal{O}_{K}})_{\prism. \mr{kfl}}^{\sim}$ and any $(A, I, M_{A})\in (\mathcal{O}_{K}, M_{\mathcal{O}_{K}})_{\prism. \mr{kfl}}^{\mr{s}}$.\\
Now consider the functor $u:(\mathcal{O}_{K}, M_{\mathcal{O}_{K}})_{\prism. \mr{kfl}}^{\mr{s}}\to (\mathcal{O}_{K}, M_{\mathcal{O}_{K}})_{\mr{kqsyn}}$, given by $(A, I, M_{A})\to A/I$. The following claim ensures that $u$ is cocontinuous.
\begin{claim}
Let $(A, I, M_{A})\in (\mathcal{O}_{K}, M_{\mathcal{O}_{K}})_{\prism, \mr{kfl}}^{\mr{s}}$ and let $(A/I, M_{A})\to (R, M_{R})$ be a Kummer log-quasisyntomic map of $p$-complete log rings. Then there exists a log prism $(B, IB, M_{B})$ which is $(p, I)$-completely faithfully flat over $(A, I, M_{A})$, such that $(A/I, M_{A})\to (B/IB, M_{B})$ factors through $(R, M_{R})$ and the map $(R, M_{R})\to (B/IB, M_{B})$ is $p$-completely faithfully flat.
\end{claim}
The proof of the claim proceeds just as the proof of \cref{weakly-initial}. We first remark that $\mathcal{O}_{K}\to A/I$ is classically $p$-completely flat. This follows, since it is completely Kummer log-flat, so in particular there is a Kummer type chart $\N\to P$ of $M_{\mathcal{O}_{K}}\to M_{A/I}$ and it is easy to see that $\Z[\N]\to \Z[P]$ must be flat. We then first apply \cite[Prop. 7.11]{BS-pris} to the quasisyntomic map $A/I\to \mathcal{O}_{C}\hat{\otimes}_{\mathcal{O}_{K}}A/I$, so there exists a prism $(\tilde{A}, I\tilde{A})$, such that $A\to \tilde{A}$ is completely flat. Now endow $\tilde{A}/I$ with the log structure induced by $\Q_{\geq 0}\oplus_{\N}P\to \mathcal{O}_{C}\hat{\otimes}_{\mathcal{O}_{K}}A/I\to \tilde{A}/I$ and $\tilde{A}$ with the log structure $P\to A\to \tilde{A}$. Let $\tilde{A}'$ be the saturation of the exactification of $\tilde{A}\to \tilde{A}/I$.\\
Now $R\hat{\otimes}^{\mr{log}}_{A/I}\tilde{A}'/I$ is $p$-completely Kummer log-flat over $\tilde{A}'/I$, which in turn is $p$-completely Kummer log-flat over $\mathcal{O}_{C}$. Now by \cref{strictness}, any $p$-completely Kummer log-flat log-ring over $\mathcal{O}_{C}$ is strict. We therefore may apply \cite[Prop. 7.11]{BS-pris} to $\tilde{A}'/I\to R\hat{\otimes}^{\mr{log}}_{A/I}\tilde{A}'/I$ to obtain a prism $(B, IB)$ over $\tilde{A}'$. Endowing $B$ with the log-structure induced from $\tilde{A}'$, yields the desired log-prism $(B, IB, M_{B})$.\\[0.5cm]
The cocontinuous functor $u$ induces a morphism of topoi $((\mathcal{O}_{K}, M_{\mathcal{O}_{K}})_{\prism. \mr{kfl}}^{\mr{s}, \sim}\to (\mathcal{O}_{K}, M_{\mathcal{O}_{K}})_{\mr{kqsyn}}^{\sim}$. Composing $u_{*}$ with the functor $\nu_{*}$ yields the desired functor 
\[(\mathcal{O}_{K}, M_{\mathcal{O}_{K}})_{\prism. \mr{kfl}}^{\sim}\to (\mathcal{O}_{K}, M_{\mathcal{O}_{K}})_{\mr{kqsyn}}^{\sim}.\]
\end{proof}

\begin{rmk}\label{left-adjoint}
The functor $\mu$ admits a left adjoint $\mu^{\natural}$.
Namely, first note that the functor $\nu_{*}:(\mathcal{O}_{K}, M_{\mathcal{O}_{K}})_{\prism. \mr{kfl}}^{\mr{s}}\to (\mathcal{O}_{K}, M_{\mathcal{O}_{K}})_{\prism, \mr{kfl}}^{\sim}$ admits a left adjoint $\nu^{\natural}$, defined by
\[\mu^{\natural}(\mathcal{F})((A, I, M_{A}))=\underset{(B, IB, M_{B})\in \mathcal{I}_{A})}{colim}\mathcal{F}((B, J, M_{B}))\]
where $\mathcal{I}_{A}$ is the category of pairs $((B, J, M_{B}), \phi)$, where $(B, J, M_{B})$ is a log prism, with $B/JB$ Kummer log-quasisyntomic over $\mathcal{O}_{K}$ and $\phi:(B, J, M_{B})\to (A, I, M_{A})$ is a map of log-prisms.\\
Then $\mu^{\natural}$ is defined as the composition of $\nu^{\natural}$ with $u^{*}$, where $(u_{*}, u^{*})$ denotes the morphism of topoi $((\mathcal{O}_{K}, M_{\mathcal{O}_{K}})_{\prism. \mr{kfl}}^{\mr{s}, \sim}\to (\mathcal{O}_{K}, M_{\mathcal{O}_{K}})_{\mr{kqsyn}}^{\sim}$, considered in the proof above.
\end{rmk}

\subsection{Products}
In this section we study the products
\begin{center}
$\mathcal{O}_{C}^{(n),log}=(\underbrace{\mathcal{O}_{C}\hat{\otimes}_{\mathcal{O}_{K}}\cdots \hat{\otimes}_{\mathcal{O}_{K}}\mathcal{O}_{C}}_{n-times})^{log}$
\end{center}
in the category of $p$-complete qfs-log rings.
\begin{rmk}\label{rmk-prod}
Consider the $p$-completed tensor product $\mathcal{O}_{C}\hat{\otimes}_{\mathcal{O}_{K}} \mathcal{O}_{C}$. It may be endowed with the pre-log structure $P:=\Q_{\geq0}\oplus_{\N}\Q_{\geq 0}\to \mathcal{O}_{C}\hat{\otimes}_{\mathcal{O}_{K}} \mathcal{O}_{C}$, induced by the prelog structure on $\mathcal{O}_{C}$ on each factor. The group completion $P^{gp}$ is then the abelian group $\Q\oplus_{\Z} \Q=\cong \Q/\Z\times \Q$. The monoid $P$ is not saturated: for example the element $x=(\frac{1}{3}, 0)-(0, \frac{1}{3})\in \Q\oplus_{\Z} \Q$ is not contained in $P$, but $3x=0\in P$. Indeed, it is easy to check that $P^{sat}=\Q/\Z\times \Q_{\geq 0}$. The coproduct $\mathcal{O}_{C}^{(2), log}$ in the category of $p$-complete qfs pre-log rings is thus given by $P^{sat}\to (\mathcal{O}_{C}\hat{\otimes}_{\mathcal{O}_{K}} \mathcal{O}_{C})\hat{\otimes}_{\mathbb{Z}[P]}\Z[P^{sat}]$.\\
In general, for $n\geq 1$, the coproduct $\mathcal{O}_{C}^{(n), log}$ in the category of $p$-complete qfs pre-log rings is given by $(\underbrace{\mathcal{O}_{C}\hat{\otimes}_{\mathcal{O}_{K}}\cdots \hat{\otimes}_{\mathcal{O}_{K}}\mathcal{O}_{C}}_{n-times})\hat{\otimes}_{\Z[P]}\Z[P^{sat}]$, where $P=\underbrace{\Q_{\geq 0}\oplus_{\N}\cdots \oplus_{\N}\Q_{\geq 0}}_{n-times}$, and $P^{sat}$ is seen to be isomorphic to $(\Q/\Z)^{n-1}\times \Q_{\geq 0}$.
\end{rmk}

\begin{lem}
The ring underlying the log product $\mathcal{O}_{C}^{(n),log}$ is quasi-regular semiperfectoid.
\end{lem}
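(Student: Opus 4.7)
I verify the four defining conditions of a quasi-regular semiperfectoid ring for $A := \mathcal{O}_C^{(n),\log}$: $p$-adic completeness (immediate from the construction), $p$-complete flatness over $\mathbb{Z}_p$, semiperfectness of $A/p$, and that $L^{\wedge}_{A/\mathbb{Z}_p}$ has $p$-complete Tor-amplitude in $[-1,0]$.

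Flatness and semiperfectness follow directly from the explicit presentation in the preceding remark. Set $B := \mathcal{O}_C \hat\otimes_{\mathcal{O}_K} \cdots \hat\otimes_{\mathcal{O}_K} \mathcal{O}_C$ ($n$ factors). Then $B$ is a $p$-completed tensor of $p$-torsion-free rings, hence $p$-completely flat over $\mathbb{Z}_p$, and $A = B \hat\otimes_{\mathbb{Z}[P]} \mathbb{Z}[P^{sat}]$ is obtained by flat base change along $\mathbb{Z}[P] \to \mathbb{Z}[P^{sat}]$, which is a filtered colimit of finite free Kummer extensions indexed by the intermediate fs sub-monoids, hence flat. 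For semiperfectness, an element $a \otimes T^\xi \in A/p$ admits the $p$-th root $a^{1/p} \otimes T^{\xi/p}$: the first factor exists because $B/p$ is semiperfect (a tensor of semiperfect rings in characteristic $p$ is again semiperfect, even over a non-semiperfect base), and the second exists because $P^{sat} \cong (\mathbb{Q}/\mathbb{Z})^{n-1} \times \mathbb{Q}_{\geq 0}$ is uniquely $p$-divisible as a monoid.

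The main content is the Tor-amplitude bound. I factor $\mathbb{Z}_p \to A$ through the perfectoid ring $R := \mathcal{O}_C \hat\otimes_{\mathbb{Z}_p} \cdots \hat\otimes_{\mathbb{Z}_p} \mathcal{O}_C$ ($n$ factors), which is perfectoid because the class of perfectoid rings is closed under $p$-completed pushouts over $\mathbb{Z}_p$. Since $L^{\wedge}_{R/\mathbb{Z}_p}$ has Tor-amplitude in $[-1,0]$ (by perfectoidness of $R$), the transitivity triangle reduces the problem to bounding $L^{\wedge}_{A/R}$. The map $R \to A$ factors through $B$ as the Koszul-regular quotient $R \twoheadrightarrow B$ by the sequence $(\pi_1 - \pi_i)_{i=2}^n$ (contributing a finite free module concentrated in degree $-1$), followed by the flat base change $B \to A$ along $\mathbb{Z}[P] \to \mathbb{Z}[P^{sat}]$, whose relative cotangent is $\Omega^1_{\mathbb{Z}[P^{sat}]/\mathbb{Z}[P]} \otimes_{\mathbb{Z}[P^{sat}]} A$, a sum of cyclic torsion quotients of the form $\mathbb{Z}[P^{sat}]/(m T^{(m-1)q})$, each admitting an explicit two-term free resolution and thus of projective dimension at most one.

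The main technical obstacle is this last cotangent computation: verifying that $(\pi_1 - \pi_i)_{i=2}^n$ is genuinely Koszul-regular in $R$ (which should follow by an inductive argument on successive perfectoid quotients, each keeping the uniformizer a nonzerodivisor), and controlling $\Omega^1_{\mathbb{Z}[P^{sat}]/\mathbb{Z}[P]}$ as a filtered colimit of torsion modules to conclude that it sits in $p$-complete Tor-amplitude $[-1,0]$; this reduces to checking each finite Kummer intermediate stage, where the relative cotangent is a genuine complete intersection torsion piece.
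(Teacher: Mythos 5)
Your reduction has a genuine gap at its central step: the map $\Z[P]\to\Z[P^{\mr{sat}}]$ for $P=\Q_{\geq 0}\oplus_{\N}\cdots\oplus_{\N}\Q_{\geq 0}$ is \emph{not} flat, and it is not a filtered colimit of finite free Kummer extensions (a saturation map induces an isomorphism on group completions, so it is of a completely different nature from a Kummer map such as $\N\to\Q_{\geq 0}$). Already at finite level and $n=2$ the saturation map is $\Z[x,y]/(x^{m}-y^{m})\to \Z[t,\zeta]/(\zeta^{m}-1)$, $x\mapsto t$, $y\mapsto \zeta t$: this is an isomorphism after inverting $t$, but its fibre over the vertex $x=y=0$ is $\Z[\zeta]/(\zeta^{m}-1)$ of rank $m$, so it is not flat; in the colimit the fibre of $\Z[P^{\mr{sat}}]$ over the vertex of $\Z[P]$ is all of $\Z[\mu_{\infty}]$. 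Consequently neither the asserted $p$-complete flatness of $A$ over $\Z_p$ nor the identification $L_{A/B}\simeq \Omega^{1}_{\Z[P^{\mr{sat}}]/\Z[P]}\otimes A$ follows by ``flat base change'', and without at least Tor-independence of $B$ and $\Z[P^{\mr{sat}}]$ over $\Z[P]$ (which you do not address) the transitivity argument for the Tor-amplitude of $L^{\wedge}_{A/\Z_p}$ does not go through as written. A second, less damaging, error: $R=\mathcal{O}_{C}\hat{\otimes}_{\Z_p}\cdots\hat{\otimes}_{\Z_p}\mathcal{O}_{C}$ is \emph{not} perfectoid for $n\geq 2$. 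Perfectoid rings are closed under $p$-completed pushouts over a \emph{perfectoid} base, not over $\Z_p$; indeed $L^{\wedge}_{R/\Z_p}\simeq R[1]^{\oplus n}$ by K\"unneth, whereas a perfectoid ring has invertible (rank one) $p$-completed cotangent complex concentrated in degree $-1$. This particular slip is repairable, since $R$ is still quasisyntomic and receives a map from the perfectoid ring $\mathcal{O}_{C}$, but the stated justification is false. (Also, for $k\neq\F_p$ the kernel of $R\to B$ is not generated by $(\pi_1-\pi_i)_{i}$ alone; one should factor through $W(k)$, which is harmless for $L^{\wedge}$ but needs to be said.)

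For contrast, the paper sidesteps the homological analysis of the saturation entirely: it introduces $\mathcal{O}_{L}=(\mathcal{O}_{K}\otimes_{\Z[\N]}\Z[\Q_{\geq 0}])^{\wedge}$, notes that $\mathcal{O}_{L}\to\mathcal{O}_{C}$ is strict, and identifies the log product explicitly as $(\mathcal{O}_{C}\otimes_{\mathcal{O}_{L}}(\mathcal{O}_{L}\otimes_{\mathcal{O}_{K}}\Z[\mu_{\infty}])\otimes_{\mathcal{O}_{L}}\mathcal{O}_{C})^{\wedge}$ --- the point being that saturating $P$ amounts to adjoining $\mu_{\infty}$, which \emph{is} a filtered colimit of finite free extensions. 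Quasisyntomicity is then immediate from the presentation as a $p$-completed filtered colimit of syntomic $\Z_p$-algebras, and the remaining qrsp conditions are checked as in your first paragraph (which is fine). If you want to salvage your route, you must analyse $\Z[P]\to\Z[P^{\mr{sat}}]$ directly, and doing so essentially reproduces the paper's computation.
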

\begin{proof}
We give the proof for $n=2$. The general case then follows by induction. In the category of $p$-complete pre-log rings, the coproduct is given by the usual tensor product $\mathcal{O}_{C}\hat{\otimes}_{\mathcal{O}_{K}} \mathcal{O}_{C}$ with pre-log structure given by the map $P:=\Q_{\geq0}\oplus_{\N}\Q_{\geq 0}\to \mathcal{O}_{C}\hat{\otimes}_{\mathcal{O}_{K}} \mathcal{O}_{C}$, induced by the prelog structure on $\mathcal{O}_{C}$ on each factor. The group completion $P^{gp}$ is then the abelian group $\Q\oplus_{\Z} \Q\cong \Q/\Z\times \Q$. By the previous remark, the saturation $P^{sat}$ is given by $\Q_{\geq 0}/\N\times \Q_{\geq 0}=\Q/\Z\times \Q_{\geq 0}$.\\
Let us write $R:=(\mathcal{O}_{C}\hat{\otimes}_{\mathcal{O}_{K}} \mathcal{O}_{C})\hat{\otimes}_{\mathbb{Z}[P]}\Z[P^{sat}]$. We need to prove that:
\begin{enumerate}
\item $R$ is quasi-syntomic.
\item There exists a map $S\to R$, with $S$ perfectoid.
\item $R/p$ is semiperfect.
\end{enumerate}
Condition (2) is obviously satisfied, as $\mathcal{O}_{C}$ maps to $R$. For (3), note that $\Q_{\geq 0}\times \Q/\Z$ is divisible, so that $\F_{p}[\Q_{\geq 0}\times \Q/\Z]$ is semiperfect. Since $\mathcal{O}_{C}\otimes_{\mathcal{O}_{K}}\mathcal{O}_{C}/p$ is semiperfect, we then also get that $R/p=\mathcal{O}_{C}\otimes_{\mathcal{O}_{K}}\mathcal{O}_{C}/p\otimes_{\F_{p}[P]}\F_{p}[P^{sat}]$ is semiperfect.\\
We are left with proving (1). For this, we first choose a compatible system $\{\pi^{\frac{1}{n}}\}_{n\in \N}$ of roots of the uniformizer $\pi \in \mathcal{O}_{K}$ and we define $L$ to be the $p$-adic completion of the field $\varinjlim_{n} K(\pi^{^{\frac{1}{n}}})$. Denote by $\mathcal{O}_{L}$ its ring of integers. We can then write $\mathcal{O}_{L}=(\mathcal{O}_{K}\otimes_{\Z[\N]}\Z[\Q_{\geq 0}])^{\wedge}$ and we endow $\mathcal{O}_{L}$ with the pre-log structure given by $\Q_{\geq 0}\to \mathcal{O}_{L}$. The extension $\mathcal{O}_{L}\to \mathcal{O}_{C}$ is then a strict map of pre-log rings and we get 
\begin{align*}
\mathcal{O}_{C}\hat{\otimes}^{log}_{\mathcal{O}_{K}}\mathcal{O}_{C} & = & ((\mathcal{O}_{C}\otimes_{\mathcal{O}_{L}}\mathcal{O}_{L})\otimes^{log}_{\mathcal{O}_{K}}(\mathcal{O}_{C}\otimes_{\mathcal{O}_{L}}\mathcal{O}_{L}))^{\wedge}\\
& = & (\mathcal{O}_{C}\otimes_{\mathcal{O}_{L}}(\mathcal{O}_{L}\otimes^{log}_{\mathcal{O}_{K}}\mathcal{O}_{L})\otimes_{\mathcal{O}_{L}}\mathcal{O}_{C})^{\wedge}\\
& =& (\mathcal{O}_{C}\otimes_{\mathcal{O}_{L}}(\mathcal{O}_{L}\otimes_{\mathcal{O}_{K}}\Z[\mu_{\infty}])\otimes_{\mathcal{O}_{L}}\mathcal{O}_{C})^{\wedge}
\end{align*}
Now, since $\mathcal{O}_{C}\otimes_{\mathcal{O}_{L}}(\mathcal{O}_{L}\otimes_{\mathcal{O}_{K}}\Z[\mu_{\infty}])\otimes_{\mathcal{O}_{L}}\mathcal{O}_{C}$ is the direct limit of syntomic $\Z_{p}$-algebras, we find that $\mathcal{O}_{C}\hat{\otimes}^{log}_{\mathcal{O}_{K}}\mathcal{O}_{C}$ is quasi-syntomic.
\end{proof}
\begin{prop}\label{initial}
Let $n\geq 0$. The log-prismatic site of $\mathcal{O}_{C}^{(n),log}$ has an initial object $(\prism_{\mathcal{O}_{C}^{(n),log}}, I,  M_{\prism_{\mathcal{O}_{C}^{(n),log}}})$.
\end{prop}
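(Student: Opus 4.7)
My plan is to construct the initial log prism as a log-prismatic envelope, combining the classical prismatic envelope of \cite[Prop.~7.11]{BS-pris} with the exactification procedure of \cite[Constr.~2.17]{Ko1}. The case $n=1$ is precisely \cref{cor-perf}, so I may assume $n\geq 2$. By the preceding lemma, $\mathcal{O}_{C}^{(n)}$ (the underlying ring of $\mathcal{O}_{C}^{(n),\log}$) is quasi-regular semiperfectoid, so the strategy is to first pin down the underlying classical prism and then adjoin the right saturated log structure on top.

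\textbf{Construction.} Apply \cite[Prop.~7.11]{BS-pris} to the perfect prism $(A_{\mr{inf}}(\mathcal{O}_{C}),(\xi))$ and the quasi-syntomic map $\mathcal{O}_{C}\to \mathcal{O}_{C}^{(n)}$ (factoring through the first $\mathcal{O}_{C}$-factor) to obtain a prism $\tilde{A}$ which is $(p,\xi)$-completely flat over $A_{\mr{inf}}(\mathcal{O}_{C})$ and satisfies $\tilde{A}/\xi = \mathcal{O}_{C}^{(n)}$. Endow $\tilde{A}$ with the pre-log structure pulled back from the chart $P := \Q_{\geq 0}^{\oplus_{\N} n}\to \mathcal{O}_{C}^{(n)}$, and apply the exactification construction \cite[Constr.~2.17]{Ko1} together with the saturation \cite[Prop.~2.16]{Ko1}, exactly as in the proof of \cref{weakly-initial}. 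This yields a log prism
\[
(\prism_{\mathcal{O}_{C}^{(n),\log}},\, I,\, M_{\prism_{\mathcal{O}_{C}^{(n),\log}}})
\]
whose log structure is induced by the chart $P^{\mr{sat}} \cong (\Q/\Z)^{n-1}\times \Q_{\geq 0}$ computed in \cref{rmk-prod}, and whose reduction mod $I$ is $\mathcal{O}_{C}^{(n),\log}$.

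\textbf{Universal property.} Let $(A,I',M_{A})$ be any log prism over $\mathcal{O}_{C}^{(n),\log}$. Composing the structure map with each of the $n$ inclusions $\mathcal{O}_{C}\hookrightarrow \mathcal{O}_{C}^{(n),\log}$ gives $n$ maps of log rings $\mathcal{O}_{C}\to A/I'$, each of which, by \cref{cor-perf}, lifts uniquely to a map of log prisms $(A_{\mr{inf}}(\mathcal{O}_{C}), (\xi), M_{A_{\mr{inf}}(\mathcal{O}_{C})})\to (A,I',M_{A})$. The resulting $n$-fold coproduct of prism maps into $A$ factors uniquely through $\tilde{A}$ by the universal property of the classical envelope, and then through the exactification and saturation by the universal properties of those constructions. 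The $\delta_{\mr{log}}$-compatibility on the chart $P^{\mr{sat}}$ is automatic: because $P^{\mr{sat},\mr{gp}}\cong (\Q/\Z)^{n-1}\times \Q$ is $p$-divisible, every section of $P^{\mr{sat}}$ admits compatible $p^{k}$-th roots, hence by the Claim proved in \cref{cor-perf} one forces $\delta_{\mr{log}}=0$ on the image of the chart. This both ensures existence of a $\delta_{\mr{log}}$-structure on $\prism_{\mathcal{O}_{C}^{(n),\log}}$ and forces the lift to $A$ to be the unique one.

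\textbf{Main obstacle.} The technical heart is to show that the successive operations \emph{exactify, then saturate} commute suitably with the $\delta$- and $\delta_{\mr{log}}$-structures, so that the output is genuinely a log prism (not only a log $\delta$-pre-prism) and that the resulting chart is really $P^{\mr{sat}}$ as described in \cref{rmk-prod}. This is the same point where the proof of \cref{weakly-initial} is delicate, but there the $p$-divisibility of $P^{\mr{sat},\mr{gp}}$ once again collapses the potential ambiguity via the vanishing of $\delta_{\mr{log}}$ on divisible elements. A secondary point to check is that the underlying prism produced by the envelope plus exactification is the initial classical prism of $\mathcal{O}_{C}^{(n)}$; this follows from the qrsp property, since any prism receiving a map from $\mathcal{O}_{C}^{(n)}$ is received from the same universal envelope.
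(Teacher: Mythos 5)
Your starting point (the underlying ring is qrsp, so the classical prismatic site has an initial object $\tilde{A}$) and your $\delta_{\mr{log}}$-argument via $p$-divisibility are both correct and agree with the paper. But the core of your construction — endow $\tilde{A}$ with a pre-log structure coming from the chart $P=\Q_{\geq 0}^{\oplus_{\N}n}$, then exactify and saturate ``exactly as in the proof of \cref{weakly-initial}'' — has a genuine gap. First, a chart $P\to\mathcal{O}_{C}^{(n)}=\tilde{A}/\xi$ cannot be ``pulled back'' to $\tilde{A}$; only the first $\Q_{\geq 0}$-factor lifts (via Teichm\"uller representatives through $A_{\mr{inf}}$), and the whole point of exactification is to repair the failure of the remaining factors to lift. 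More seriously, the saturation step in the proof of \cref{weakly-initial} produces a Kummer log-flat \emph{cover}: it replaces $\tilde{A}'$ by $\tilde{A}'\hat{\otimes}_{\Z[P']}\Z[(P')^{\mr{sat}}]$, which in general changes the reduction mod $I$. That is precisely why \cref{weakly-initial} only yields a \emph{weakly} initial object. Your assertion that the output has reduction mod $I$ equal to $\mathcal{O}_{C}^{(n),\log}$ is therefore unjustified, and without it initiality fails: a log prism whose quotient is a proper Kummer cover of $\mathcal{O}_{C}^{(n),\log}$ does not admit a map, over $\mathcal{O}_{C}^{(n),\log}$, to a log prism whose quotient is $\mathcal{O}_{C}^{(n),\log}$ itself. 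Relatedly, your closing claim that ``the underlying prism produced by the envelope plus exactification is the initial classical prism'' is false as stated — exactification enlarges the underlying prism.

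The missing idea is \cref{strictness}: the first-factor map $(\mathcal{O}_{C},M_{\mathcal{O}_{C}})\to\mathcal{O}_{C}^{(n),\log}$ is $p$-completely Kummer log-flat, hence \emph{strict}, so the log structure on $\mathcal{O}_{C}^{(n),\log}$ is already induced by $\mathcal{O}_{C}\setminus\{0\}$ through the first factor (the torsion part $(\Q/\Z)^{n-1}$ of $P^{\mr{sat}}$ maps to units and contributes nothing to the associated log structure). Consequently the relevant chart already lifts to $\tilde{A}$ through $A_{\mr{inf}}$, no exactification or saturation is needed, and one simply takes $M_{\prism_{\mathcal{O}_{C}^{(n),\log}}}$ to be induced by $\mathcal{O}_{C}^{\flat}\setminus\{0\}\to A_{\mr{inf}}\to\tilde{A}$. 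The universal property then follows by combining the classical universal property of $\tilde{A}$ with \cref{lem-perf-log} (uniqueness of the monoid map out of $M_{A_{\mr{inf}}}$) and strictness to extend it to $M_{\prism_{\mathcal{O}_{C}^{(n),\log}}}$; your reduction to $n$ applications of \cref{cor-perf} is in the right spirit but does not by itself produce or control the monoid map out of the chart you constructed. Finally, the ``main obstacle'' you identify (compatibility of exactification/saturation with $\delta$- and $\delta_{\mr{log}}$-structures) is already handled by \cite[Prop.~2.16, Constr.~2.17]{Ko1}; the actual delicate points are the two above.
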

\begin{proof}
Since $\mathcal{O}_{C}^{(n),log}$ is qrsp, the ordinary prismatic site admits a final object $(\prism_{\mathcal{O}_{C}^{(n),log}}, I)$. 
By \cref{strictness}, the morphism $\mathcal{O}_{C}\to \mathcal{O}_{C}^{(n),log}$, mapping $\mathcal{O}_{C}$ to the first factor, is strict, i.e. the log structure on $\mathcal{O}_{C}\to \mathcal{O}_{C}^{(n),log}$ is induced by $\mathcal{O}_{C}\backslash \{0\} \hookrightarrow \mathcal{O}_{C}\to \mathcal{O}_{C}^{(n),log}$.\\
By universality, the morphism $\mathcal{O}_{C}\to \mathcal{O}_{C}^{(n),log}$ lifts to a morphism $A_{inf}\to \prism_{\mathcal{O}_{C}^{(n),log}}$. We then endow $\prism_{\mathcal{O}_{C}^{(n),log}}$ with the log structure $M_{\prism_{\mathcal{O}_{C}^{(n),log}}}$ induced by 
$\mathcal{O}_{C}^{\flat}\backslash \{ 0\} \to A_{inf}\to \prism_{\mathcal{O}_{C}^{(n),log}}$.\\
We claim that $(\prism_{\mathcal{O}_{C}^{(n),log}}, I, M_{\prism_{\mathcal{O}_{C}^{(n),log}}}, \delta_{log}\equiv 0)$ is the final object of the log-prismatic site.\\
Let $(B, J, M_{B}, \delta_{log})$ be an object in the log prismatic site of $\mathcal{O}_{C}^{(n),log}$. 
The map $\mathcal{O}_{C}^{(n),log}\to B/J$ lifts uniquely to a map of prisms $\prism_{\mathcal{O}_{C}^{(n),log}}\to B$. By Lemma \ref{lem-perf-log}, there exists a unique map of monoids $M_{A_{inf}}\to M_{B}$, such that the composition $A_{inf}\to \prism_{\mathcal{O}_{C}^{(n),log}}\to B$, becomes a map of log prisms. But since the first morphism is strict, we also get that $\prism_{\mathcal{O}_{C}^{(n),log}}\to B$ can be uniquely extended to a morphism of log rings.\\
The compatibility with the $\delta_{log}$-structures follows by the same arguments as in the proof of \cref{cor-perf}.
\end{proof}
\begin{rmk}\label{copr}
It is easy to see that $\prism_{\mathcal{O}_{C}^{(n),log}}$ identifies with the $n$-fold self coproduct of $A_{inf}$ in $(\mathcal{O}_{K}, M_{\mathcal{O}_{K}})_{\prism, \mr{kfl}}$.
\end{rmk}
\section{Log-prismatic Dieudonn\'e crystals}
We will now introduce log-prismatic Dieudonn\'e modules. 
\begin{defn}
Let $(R, M_{R})$ be a bounded $p$-complete log ring. A log-prismatic Dieudonn\'e module over $R$ is given by a pair $(\mathcal{M}, \Phi_{\mathcal{M}})$, where $\mathcal{M}$ is a locally free $\mathcal{O}_{\prism_{\mr{kfl}}}$-module and $\Phi_{\mathcal{M}}:\mathcal{M}\to \mathcal{M}$ is a $\phi_{\mathcal{O}_{\prism_{\mr{kfl}}}}$-linear map, such that the linearization
\[\phi^{*}\mathcal{M}\to \mathcal{M}\]
has $\mathcal{I}$-torsion cokernel.\\
We denote the category of log-prismatic Dieudonn\'e modules by $DM((R, M_{R})_{\prism, \mr{kfl}})$.
\end{defn}

\begin{prop}\label{LDM}
Let $R=\mathcal{O}_{C}^{(n), \mr{log}}$, for some $n\geq 0$. Then the evaluation functor 
\[DM((R, M_{R})_{\prism, \mr{kfl}})\to DM(\prism_{R})\]
is an equivalence of categories. Here $DM(\prism_{R})$ denotes the category of admissible Dieudonn\'e modules over the ring $\prism_{R}$ (see \cite[Def. 4.1.15]{ALB}).
\end{prop}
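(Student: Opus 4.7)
The plan is to reduce the statement to the classical (non-logarithmic) prismatic equivalence of Ansch\"utz--Le Bras \cite{ALB}, exploiting the initial object furnished by \cref{initial}. I would structure the argument in two parts.

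First, I would use the initiality of $(\prism_R, I, M_{\prism_R})$ in $(R, M_R)_{\prism}$ to set up the evaluation functor and its candidate inverse. By \cref{initial}, for every log-prism $(B, J, M_B)$ over $R$ there is a unique map $\prism_R \to B$ in the site, so the crystal condition forces any log-Dieudonn\'e crystal $\mathcal{M}$ to satisfy $\mathcal{M}(B, J, M_B) \cong \mathcal{M}(\prism_R) \otimes_{\prism_R} B$, with Frobenius inherited from that on $\prism_R$. This yields fully faithfulness at once. The candidate inverse is the assignment $(M, \Phi_M) \mapsto \{(B, J, M_B) \mapsto M \otimes_{\prism_R} B\}$ for any admissible Dieudonn\'e module $(M, \Phi_M)$ over $\prism_R$; finite local freeness and the $\mathcal{I}$-torsion cokernel condition are preserved under arbitrary base change, so the only remaining point is to verify that this assignment actually satisfies the Kummer log-flat sheaf condition on the whole site.

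Second, I would verify this sheaf condition by reducing the Kummer log-flat topology on the slice over $\prism_R$ to the strict $(p, I)$-completely flat topology. The log structure on $\prism_R$ is induced by $\mathcal{O}_C^{\flat}\setminus\{0\}$, which is divisible, since $\mathcal{O}_C$ is the ring of integers of an algebraically closed non-archimedean field and hence every nonzero element admits $n$-th roots for all $n\geq 1$. A variant of the argument in \cref{strictness}(2) then shows that any Kummer log-flat morphism of log-prisms $(\prism_R, I, M_{\prism_R}) \to (B, IB, M_B)$ is automatically strict: a Kummer chart refinement $P \hookrightarrow Q$ of a chart contained in $\mathcal{O}_C^{\flat}\setminus\{0\}$ has all its fractional powers already available inside $\mathcal{O}_C^{\flat}\setminus\{0\}$, so the Kummer extension is trivial at the level of log structures on $\prism_R$. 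Therefore the Kummer log-flat topos and the strict topos of the slice over $\prism_R$ coincide, and the sheaf condition for the candidate inverse follows from classical $(p, I)$-completely flat descent for finite locally free modules equipped with a Frobenius, i.e.\ from the situation treated in \cite[\S 4.1]{ALB}.

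The main obstacle I anticipate is the second step: making precise that \emph{every} Kummer log-flat cover of $\prism_R$ is refined by a strict one, rather than merely that individual Kummer morphisms out of $\prism_R$ are strict. This will require careful bookkeeping with saturations, $p$-completions and exactifications in the sense of \cite[Constr.~2.17]{Ko1} and \cite[Prop.~2.16]{Ko1}, combined with the key point that the monoid $\mathcal{O}_C^{\flat}\setminus\{0\}$ absorbs every Kummer extension of any of its finitely generated submonoids. Once this reduction is in place, both the fully faithfulness and the essential surjectivity of the evaluation functor follow from the admissibility equivalence of Ansch\"utz--Le Bras applied to the qrsp ring $\prism_R$.
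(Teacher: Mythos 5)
Your proposal is correct and follows essentially the same route as the paper: initiality of $\prism_R$ (from \cref{initial}) reduces log-Dieudonn\'e crystals to Frobenius modules on the slice over $\prism_R$, and the divisibility of $\mathcal{O}_C^{\flat}\setminus\{0\}$ via \cref{strictness}(2) shows every Kummer log-flat map out of $\prism_R$ is strict, so the Kummer log-flat site collapses to the strict flat site and one concludes by classical flat descent for finite projective modules. The obstacle you flag in your last paragraph is in fact already disposed of by \cref{strictness}(2), which makes each covering morphism itself strict rather than merely refinable by a strict one.
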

\begin{proof}
Over $R$, by \cref{initial}, we have the initial log prism $\prism_{R}$, so locally free log-prismatic crystals correspond to locally free sheaves on $(\prism_{R})_{\mr{kfl}}$. But by \cref{strictness} any Kummer log flat map of log-prism $\prism_{R}\to A$ is in fact strict, so that $(\prism_{R})_{\mr{kfl}}\cong (\prism_{R})_{\mr{fl}}$. But the category of locally free sheaves on $(\prism_{R})_{\mr{fl}}$ identifies with the category of projective modules over $\prism_{R}$.
\end{proof}
In particular, we see that any log prismatic Dieudonn\'e crystal over $\mathcal{O}_{C}$ is in fact a classical prismatic Dieudonn\'e crystal.\\[0.5cm]

We denote by $\prism_{\mathcal{O}_{C}}^{(\bullet), \mr{log}}$ the cosimplicial log-prism obtained by setting $\prism_{\mathcal{O}_{C}}^{(n), \mr{log}}=\prism_{\mathcal{O}_{C}^{(n), \mr{log}}}$, endowed with its canonical log-structure. We also further denote by 
$\begin{tikzcd}
    p_{1}, p_{2}:\prism_{\mathcal{O}_{C}} \arrow[r, shift left=0.5ex] \arrow[r, shift right=0.5ex, swap] & \prism_{\mathcal{O}_{C}^{(1), \mr{log}}}
    \end{tikzcd}$
the maps induced by the canonical maps $\begin{tikzcd}
    \mathcal{O}_{C} \arrow[r, shift left=0.5ex] \arrow[r, shift right=0.5ex, swap] & \mathcal{O}_{C}^{(1), \mr{log}}.
    \end{tikzcd}$

\begin{defn}
We denote by $DM(\prism_{\mathcal{O}_{C}}^{(\bullet), \mr{log}})$ the category consisting of triples $(M, \Phi_{M}, \epsilon)$, where
\begin{itemize}
\item $(M, \Phi_{M})\in DM(\prism_{\mathcal{O}_{C}})$.
\item $\epsilon:p_{1}^{*}(M, \Phi_{M})\xrightarrow{\cong}p_{2}^{*}(M, \Phi_{M})$ is an isomorphism, which satisfies the obvious cocycle condition over $\prism_{\mathcal{O}_{C}}^{(2), \mr{log}}$.
\end{itemize}
\end{defn}
\begin{prop}\label{DM-descent}
The evaluation functor induces an equivalence of categories
\[DM((\mathcal{O}_{K}, M_{\mathcal{O}_{K}}))_{\prism, \mr{kfl}})\longleftrightarrow DM(\prism_{\mathcal{O}_{C}}^{(\bullet), \mr{log}}).\]
\end{prop}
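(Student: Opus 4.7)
The plan is to realize the equivalence as descent along the cover of the final object of $(\mathcal{O}_{K}, M_{\mathcal{O}_{K}})_{\prism, \mr{kfl}}^{\sim}$ given by the log prism $(A_{\mr{inf}}, (\xi), M_{A_{\mr{inf}}})$ attached to $\mathcal{O}_{C}$. First I would observe that by \cref{weakly-initial}, every log prism $(A, I, M_{A})$ over $\mathcal{O}_{K}$ admits a faithfully Kummer log-flat cover receiving a map from $A_{\mr{inf}}$; equivalently, the sheaf represented by $(A_{\mr{inf}}, (\xi), M_{A_{\mr{inf}}})$ covers the terminal object of the topos. Consequently, \v{C}ech-style descent identifies $DM((\mathcal{O}_{K}, M_{\mathcal{O}_{K}})_{\prism, \mr{kfl}})$ with the category of objects of $DM((\mathcal{O}_{C}, M_{\mathcal{O}_{C}})_{\prism, \mr{kfl}})$ equipped with an isomorphism of pullbacks to the double self-product, satisfying a cocycle condition on the triple self-product, provided one knows that the relevant self-products exist in the log prismatic site and that the fibration of Dieudonn\'e crystals satisfies effective descent.

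The second step is to identify these self-products. By \cref{copr} (together with \cref{initial}), the $n$-fold self coproduct of $(A_{\mr{inf}}, M_{A_{\mr{inf}}})$ in $(\mathcal{O}_{K}, M_{\mathcal{O}_{K}})_{\prism, \mr{kfl}}$ is precisely the initial log prism $\prism_{\mathcal{O}_{C}^{(n), \mr{log}}}$ over $\mathcal{O}_{C}^{(n), \mr{log}}$. Combining this with \cref{LDM}, the evaluation functor at each level of the \v{C}ech nerve identifies the category of log-prismatic Dieudonn\'e modules over $\mathcal{O}_{C}^{(n), \mr{log}}$ with the category of admissible Dieudonn\'e modules over $\prism_{\mathcal{O}_{C}^{(n), \mr{log}}}$. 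Thus the \v{C}ech description of $DM((\mathcal{O}_{K}, M_{\mathcal{O}_{K}})_{\prism, \mr{kfl}})$ takes the shape appearing in the definition of $DM(\prism_{\mathcal{O}_{C}}^{(\bullet), \mr{log}})$: a module with Frobenius over $\prism_{\mathcal{O}_{C}}$, an isomorphism of its two pullbacks to $\prism_{\mathcal{O}_{C}^{(1), \mr{log}}}$, and the cocycle condition on $\prism_{\mathcal{O}_{C}^{(2), \mr{log}}}$.

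The heart of the matter is therefore effectivity of descent along the cover $A_{\mr{inf}} \to \cdots$. I would argue this in two stages. For the underlying finite locally free sheaves on $\mathcal{O}_{\prism_{\mr{kfl}}}$, I use that a Kummer log-flat map of log prisms $(A_{\mr{inf}}, M_{A_{\mr{inf}}}) \to (A, M_{A})$ is automatically strict by \cref{strictness}(2); the problem reduces to classical faithfully flat descent for finite projective modules over $\prism_{\mathcal{O}_{C}^{(n), \mr{log}}}$, for which I can invoke the standard result plus \cref{desc-lf} to pass between the full and the finitely presented Kummer topologies. The Frobenius structure and the condition that $\phi^{*}\mathcal{M} \to \mathcal{M}$ has $\mathcal{I}$-torsion cokernel are both local data, so they descend once the underlying module does. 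Putting these two stages together produces the quasi-inverse to the evaluation functor.

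The step I expect to be the main obstacle is verifying that the cover $(A_{\mr{inf}}, M_{A_{\mr{inf}}}) \to \ast$ together with its self-products really computes \v{C}ech cohomology of an arbitrary sheaf on $(\mathcal{O}_{K}, M_{\mathcal{O}_{K}})_{\prism, \mr{kfl}}^{\sim}$ in the form asserted, i.e.\ justifying that this coproduct in the site coincides with the product sheaf and that the \v{C}ech-to-sheaf comparison holds for the fibered category of Dieudonn\'e crystals. This is where one must combine the strictness statement of \cref{strictness} with the log-exactification procedure from \cite[Prop.~2.16]{Ko1} used in the proof of \cref{weakly-initial}, to guarantee that the iterated fibre products of $A_{\mr{inf}}$ inside the qfs log prismatic site agree with the qfs-saturated log prismatic envelope of the iterated $p$-completed log tensor products $\mathcal{O}_{C}^{(n), \mr{log}}$. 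Once this identification is in place, the stated equivalence is a formal consequence of descent.
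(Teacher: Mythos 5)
Your proposal is correct and follows essentially the same route as the paper, whose proof is the single line ``This follows from \cref{LDM}, using that $\prism_{\mathcal{O}_{C}}$ is weakly initial'': the ingredients you assemble explicitly --- weak initiality from \cref{weakly-initial}, the identification of the self-products with $\prism_{\mathcal{O}_{C}^{(n),\mr{log}}}$ from \cref{initial} and \cref{copr}, the levelwise equivalence of \cref{LDM}, and strictness over $\mathcal{O}_{C}$ from \cref{strictness} to reduce effectivity to classical flat descent --- are exactly what the paper's terse argument implicitly relies on. Your more careful treatment of the \v{C}ech formalism and of why the coproduct in the site computes the descent data is a welcome expansion rather than a deviation.
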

\begin{proof}
This follows from \cref{LDM}, using that $\prism_{\mathcal{O}_{C}}$ is weakly initial.
\end{proof}
We will now introduce the log-prismatic Dieudonn\'e functor. Let $G\in (p-div/\mathcal{O}_{K})^{\mr{log}}$ be a log $p$-divisible group. Then $G$ is a sheaf on $(\mathcal{O}_{K})_{kqsyn}$, since any $H\in (fin/(\mathcal{O}_{K}/p^{n}))_{f}$, for any $n\geq 1$, is a sheaf for the Kummer log-flat topology. Recall from \cref{topoi-diagram} that we have a funtor $\mu:(\mathcal{O}_{K}, M_{\mathcal{O}_{K}})^{\sim}_{\prism, \mr{kfl}}\to (\mathcal{O}_{K})_{kqsyn}^{\sim}$, which admits a left adjoint $\mu^{\natural}$ (see \cref{left-adjoint}).
\begin{defn}
Let $G$ be a log $p$-divisible group over $\mathcal{O}_{K}$. The log-prismatic Dieudonn\'e functor
\[\mathcal{M}_{\prism}^{\mr{log}}:(p-div/\mathcal{O}_{K})^{(\mr{\log})}\longrightarrow DM((\mathcal{O}_{K}, M_{\mathcal{O}_{K}})_{\prism, \mr{kfl}})\]
is defined by setting 
\[\mathcal{M}^{\mr{log}}_{\prism}(G):=\mathcal{E}xt^{1}_{(\mathcal{O}_{K}, M_{\mathcal{O}_{K}})_{\prism}}(\mu^{\natural}G, \mathcal{O}_{\prism, \mr{kfl}}).\]
\end{defn}
\begin{rmk}
In contrast to \cite{ALB}, we define the log Dieudonn\'e module to be a sheaf on the log-prismatic site, instead of the Kummer log quasisyntomic site. The reason is that the log quasisyntomic site seems to be harder to handle than the quasisyntomic site. This slight adjustment of terminology is harmless, by the results in loc.cit.: If $G$ is a classical $p$-divisible group, endowed with the log structure induced by $M_{\mathcal{O}_{K}}$, the sheaf $\mathcal{M}^{\mr{log}}_{\prism}(G)$ identifies with $\epsilon^{*}v^{*}\mathcal{M}_{\prism}(G)$. Here $\epsilon:(\mathcal{O}_{K}, M_{\mathcal{O}_{K}})^{\sim}_{\prism, \mr{kfl}}\to (\mathcal{O}_{K})^{\sim}_{\prism}$ is the projection from the log-prismatic topos to the prismatic topos, $v^{*}:(\mathcal{O}_{K})^{\sim}_{qsyn}\to (\mathcal{O}_{K})^{\sim}_{\prism}$ is the functor defined in \cite[Prop. 4.1.4]{ALB} and $\mathcal{M}_{\prism}(G)$ is the classical prismatic Dieudonn\'e module (see \cite[Def. 4.2.1]{ALB} and also \cite[Lem. 4.2.4]{ALB}). This gives an exact fully faithful functor from the category of $p$-divisible groups to the category $DM((\mathcal{O}_{K}, M_{\mathcal{O}_{K}})_{\prism})$ by \cite[Prop. 4.1.4. and Prop. 4.6.8.]{ALB}.
\end{rmk}

\begin{thm}
The log-prismatic Dieudonn\'e functor
\[\mathcal{M}_{\prism}^{\mr{log}}:(p-div/\mathcal{O}_{K})^{(\mr{\log})}\longrightarrow DM((\mathcal{O}_{K}, M_{\mathcal{O}_{K}})_{\prism, \mr{kfl}})\]
is an antiequivalence of categories.
\end{thm}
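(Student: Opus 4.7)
The plan is to establish the theorem by faithfully flat descent along the faithfully $p$-completely Kummer log-flat morphism $(\mathcal{O}_K, M_{\mathcal{O}_K}) \to (\mathcal{O}_C, M_{\mathcal{O}_C})$, reducing the problem to applying the classical prismatic Dieudonn\'e correspondence of Ansch\"utz--Le Bras over each of the log self-tensor products $\mathcal{O}_C^{(n),\mr{log}}$.

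First, by the descent lemma for log $p$-divisible groups stated at the end of Section~1, $(p-div/\mathcal{O}_K)^{\mr{log}}$ is equivalent to the category of descent data attached to the cosimplicial object $\mathcal{O}_C^{(\bullet),\mr{log}}$, that is, to $\mr{Desc}\bigl((p-div/\mathcal{O}_C^{(\bullet),\mr{log}})^{\mr{log}}\bigr)$. On the Dieudonn\'e side, \Cref{DM-descent} already identifies $DM((\mathcal{O}_K, M_{\mathcal{O}_K})_{\prism, \mr{kfl}})$ with $DM(\prism_{\mathcal{O}_C}^{(\bullet),\mr{log}})$. It therefore suffices to construct, for each $n\geq 0$, a natural antiequivalence
\[
\mathcal{M}_{\prism}^{\mr{log}}:(p-div/\mathcal{O}_C^{(n),\mr{log}})^{\mr{log}} \xrightarrow{\ \sim\ } DM(\prism_{\mathcal{O}_C^{(n),\mr{log}}}),
\]
in a manner compatible with the simplicial structure.

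At level $n$, the log structure on $\mathcal{O}_C^{(n),\mr{log}}$ is induced by the divisible monoid $(\Q/\Z)^{n-1}\times \Q_{\geq 0}$ of \Cref{rmk-prod}. By divisibility, every Kummer log-flat cover of $\mathcal{O}_C^{(n),\mr{log}}$ is locally split by extracting roots already present in the monoid, so any log $p$-divisible group over $\mathcal{O}_C^{(n),\mr{log}}$ is in fact classical; equivalently, $(p-div/\mathcal{O}_C^{(n),\mr{log}})^{\mr{log}}$ coincides with the classical category $(p-div/\mathcal{O}_C^{(n),\mr{log}})$. Dually, \Cref{LDM} together with \Cref{strictness} identifies $DM((\mathcal{O}_C^{(n),\mr{log}})_{\prism, \mr{kfl}})$ with the classical category $DM(\prism_{\mathcal{O}_C^{(n),\mr{log}}})$ of admissible Dieudonn\'e modules over the quasi-regular semiperfectoid ring $\prism_{\mathcal{O}_C^{(n),\mr{log}}}$. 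The classical Ansch\"utz--Le Bras theorem, applied to this quasi-syntomic base, then provides the desired antiequivalence at level $n$, and the remark preceding the theorem guarantees that this antiequivalence recovers $\mathcal{M}_{\prism}^{\mr{log}}$ on classical objects equipped with the induced log structure.

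The main obstacle is verifying simplicial functoriality: the level-wise antiequivalences must assemble into an equivalence of cosimplicial categories, and the glued functor must agree with the globally defined $\mathcal{M}_{\prism}^{\mr{log}}$. Concretely, one has to check that both sides are compatible with pullback along the structure maps $\mathcal{O}_C^{(n),\mr{log}}\to \mathcal{O}_C^{(m),\mr{log}}$. Since $\mu^{\natural}$ is a left adjoint (\Cref{left-adjoint}) it commutes with the relevant pullback functors, and the sheafy $\mathcal{E}xt^{1}$ defining $\mathcal{M}_{\prism}^{\mr{log}}$ transforms correctly under the restriction of topoi; the input from \Cref{strictness} ensures that over $\mathcal{O}_C$-algebras all the relevant maps become strict, so the classical base-change properties of $\mathcal{M}_{\prism}$ from \cite{ALB} can be invoked. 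Once this compatibility is in place, the simplicial gluing yields the global antiequivalence, and fully faithfulness together with essential surjectivity over $\mathcal{O}_K$ follow from the corresponding properties at each simplicial level via the two descent identifications.
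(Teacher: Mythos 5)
Your proposal is correct and follows essentially the same route as the paper: descend both sides along $\mathcal{O}_{K}\to\mathcal{O}_{C}$ to descent data over the cosimplicial object $\mathcal{O}_{C}^{(\bullet),\mr{log}}$, observe via \cref{strictness} that log $p$-divisible groups and log-Dieudonn\'e modules become classical there, and invoke the Ansch\"utz--Le Bras equivalence over the quasi-regular semiperfectoid rings $\mathcal{O}_{C}^{(n),\mr{log}}$. The paper's proof is terser (it does not spell out the simplicial compatibility you rightly flag), but the decomposition and the key inputs (\cref{DM-descent}, \cref{strictness}, \cref{LDM}, and the qrsp property of the log products) are identical.
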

\begin{proof}
By \cref{DM-descent}, the category $DM((\mathcal{O}_{K}, M_{\mathcal{O}_{K}})_{\prism, \mr{kfl}})$ is equivalent to the category $DM(\prism_{\mathcal{O}_{C}}^{(\bullet), \mr{log}})$ Dieudonn\'e modules over $\prism_{\mathcal{O}_{C}}$ with logarithmic descent data. Similarly, as every log $p$-divisible group over $\mathcal{O}_{C}$ is classical (this follows from \cref{strictness}), the category of log $p$-divisible groups is equivalent to the category of $p$-divisible groups endowed with a descent datum over $\mathcal{O}_{C}^{(1), \mr{log}}$. The theorem thus follows from the classical case (\cite[Thm. 4.6.10.]{ALB}), using that the log products $\mathcal{O}_{C}^{(n), \mr{log}}$ are quasi-regular semiperfectoid.
\end{proof}

We will now extend the discussion on duality from \cite[\S 4]{ALB} to the log-case, using a slightly different terminology. Recall from \cite[Ex. 4.5]{BS-pris-cris} that on the prismatic site there is a Breuil-Kisin twist $\mathcal{O}_{\prism}\{-1\}$, which may informally be defined as $\bigotimes_{n\geq 0}(\phi^{n})^{*}\mathcal{I}^{-1}$. This is a Dieudonn\'e crystal with Frobenius map given by $\phi^{*}\mathcal{O}_{\prism}\{-1\}=\mathcal{I} \mathcal{O}_{\prism}\{-1\}\hookrightarrow \mathcal{O}_{\prism}\{-1\}$, which corresponds to the $p$-divisible group $\mu_{p^{\infty}}$. \\
Let $(\mathcal{M}, \Phi_{\mathcal{M}})\in DM((\mathcal{O}_{K}, M_{\mathcal{O}_{K}})_{\prism, \mr{kfl}})$ be a log-Dieudonn\'e module. Then we define the Cartier-dual $(\mathcal{M}^{\vee}, \Phi_{\mathcal{M}^{\vee}})$ of $(\mathcal{M}, \Phi_{\mathcal{M}})$ to be the log Dieudonn\'e module, defined by $\mathcal{M}^{\vee}:=\mathcal{H}om_{\mathcal{O}_{\prism}}(\mathcal{M}, \mathcal{O}_{\prism}\{-1\})$ and where the Frobenius $\Phi_{\mathcal{M}^{\vee}}$ is given by
\begin{align*}
\phi^{*}\mathcal{M}^{\vee} &= &\mathcal{H}om(\phi^{*}\mathcal{M}, \phi^{*}\mathcal{O}\{-1\})=\mathcal{H}om(\phi^{*}\mathcal{M}, \mathcal{I}\mathcal{O}\{-1\})\\& \longrightarrow & \mathcal{H}om(\mathcal{I}\mathcal{M}, \mathcal{I}\mathcal{O}\{-1\})=\mathcal{H}om(\mathcal{M}, \mathcal{O}\{-1\})=\mathcal{M}^{\vee},
\end{align*}
where the arrow in the middle is defined by realizing $\phi^{*}\mathcal{M}$ as a subsheaf of $\mathcal{M}$, using $\Phi_{\mathcal{M}}$ and then using that $\mathcal{M}/\phi^{*}\mathcal{M}$ is $\mathcal{I}$-torsion.\\
There is an obvious duality pairing
\[\mathcal{M}\otimes \mathcal{M}^{\vee}\to \mathcal{O}_{\prism}\{-1\},\]
compatible with the Frobenius maps.\\
The (log-)Dieudonn\'e functor is compatible with Cartier duality:

\begin{prop}
Let $G\in (p-div/\mathcal{O}_{K})^{(\mr{\log})}$ and denote by $G^{\vee}$ its Cartier-dual. Then
there is a canonical isomorphism
\[\mathcal{M}_{\prism}(G)^{\vee}\xrightarrow{\cong} \mathcal{M}_{\prism}(G^{\vee}).\]
\end{prop}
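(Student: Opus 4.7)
The plan is to reduce the statement to the classical case over $\mathcal{O}_{C}$ already established in \cite[\S4]{ALB}, by exploiting the descent picture used in the proof of the main antiequivalence theorem. Since the log-prismatic Dieudonné functor is exact, fully faithful, and compatible with base change along $\mathcal{O}_{K}\to \mathcal{O}_{C}$, and since by \cref{strictness} together with \cref{LDM} the restriction to $\mathcal{O}_{C}$ takes values in classical prismatic Dieudonné modules over $\prism_{\mathcal{O}_{C}}$, the whole statement can be tested after pulling back to the cosimplicial log-prism $\prism_{\mathcal{O}_{C}}^{(\bullet),\mr{log}}$.

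First I would verify that the formation of the Cartier dual of a log-Dieudonné module commutes with evaluation at log-prisms and with pullback along the face maps $p_{1},p_{2}:\prism_{\mathcal{O}_{C}}\to \prism_{\mathcal{O}_{C}^{(1),\mr{log}}}$. This is essentially formal: the Breuil--Kisin twist $\mathcal{O}_{\prism}\{-1\}$ and the ideal sheaf $\mathcal{I}$ are defined in terms of data ($\phi$ and $I$) that are untouched by the log-structure, and the strictness result \cref{strictness} shows that after restriction to any $\prism_{\mathcal{O}_{C}^{(n),\mr{log}}}$ the log-Dieudonné module and its dual coincide with their classical counterparts over $\prism_{\mathcal{O}_{C}^{(n),\mr{log}}}$. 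In particular $(-)^{\vee}$ passes through the equivalence of \cref{DM-descent}, so a morphism of log-Dieudonné modules over $(\mathcal{O}_{K},M_{\mathcal{O}_{K}})_{\prism,\mr{kfl}}$ is an isomorphism iff its pullback to $\prism_{\mathcal{O}_{C}}$ is one, compatibly with the descent datum over $\prism_{\mathcal{O}_{C}^{(1),\mr{log}}}$.

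Next I would verify the analogous descent on the geometric side: any log $p$-divisible group over $\mathcal{O}_{C}$ is classical, and Cartier duality for log $p$-divisible groups over $\mathcal{O}_{K}$, restricted to $\mathcal{O}_{C}$, coincides with classical Cartier duality on $p$-divisible groups over $\mathcal{O}_{C}$ (with the induced descent datum over $\mathcal{O}_{C}^{(1),\mr{log}}$). Combining this with the previous paragraph, it suffices to produce, for the restriction $G_{\mathcal{O}_{C}}$ of $G$, a canonical duality isomorphism
\[
\mathcal{M}_{\prism}(G_{\mathcal{O}_{C}})^{\vee}\xrightarrow{\cong}\mathcal{M}_{\prism}(G_{\mathcal{O}_{C}}^{\vee})
\]
in $DM(\prism_{\mathcal{O}_{C}})$ that is functorial in $G$, and to check that it is equivariant under the descent datum. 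The existence of the isomorphism in the classical case is \cite[Prop.~4.6.9]{ALB} (or the analogous statement); functoriality, combined with the already-established antiequivalence $\mathcal{M}_{\prism}^{\mr{log}}$, forces compatibility with the isomorphism $p_{1}^{*}\mathcal{M}_{\prism}(G_{\mathcal{O}_{C}})\cong p_{2}^{*}\mathcal{M}_{\prism}(G_{\mathcal{O}_{C}})$ attached to $G$, and hence also compatibility with the induced isomorphism on duals.

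The main obstacle I anticipate is the bookkeeping in the previous paragraph: one must check that the classical duality isomorphism of \cite{ALB}, which is constructed via an $\mathcal{E}xt$-computation on the quasisyntomic site, behaves naturally enough under base change along $\mathcal{O}_{C}\to \mathcal{O}_{C}^{(1),\mr{log}}$ so that the descent datum on $\mathcal{M}_{\prism}(G_{\mathcal{O}_{C}})$ induced by $G$ is transported to the descent datum on $\mathcal{M}_{\prism}(G_{\mathcal{O}_{C}}^{\vee})$ under duality. This is where the fact that $\mathcal{O}_{C}^{(n),\mr{log}}$ is quasi-regular semiperfectoid and that the Breuil--Kisin twist is pulled back from the absolute prismatic site is used. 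Once this compatibility is in place, \cref{DM-descent} and the classical duality isomorphism glue to the desired canonical isomorphism $\mathcal{M}_{\prism}^{\mr{log}}(G)^{\vee}\xrightarrow{\cong}\mathcal{M}_{\prism}^{\mr{log}}(G^{\vee})$.
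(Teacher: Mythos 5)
Your proposal is correct and follows essentially the same route as the paper: the paper's proof is the one-line statement that the result ``follows by descent, using the compatibility in the classical case (\cite[Prop. 4.6.9]{ALB})'', and your argument is precisely a fleshed-out version of that descent reduction to $\prism_{\mathcal{O}_{C}}^{(\bullet),\mr{log}}$ together with the classical duality isomorphism.
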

\begin{proof}
This follows by descent, using the compatibility in the classical case (\cite[Prop. 4.6.9]{ALB}).
\end{proof}

\subsection{Log-Dieudonn\'e modules on the strict log prismatic site}
Recall that there is a natural projection of sites $\epsilon_{\prism}:(\mathcal{O}_{K}, M_{\mathcal{O}_{K}})_{\prism, \mr{kfl}}\to (\mathcal{O}_{K}, M_{\mathcal{O}_{K}})_{\prism}$. The induced pullback functor $\epsilon_{\prism}^{*}$ on locally free crystals is fully faithful and thus also induces a full embedding $DM((\mathcal{O}_{K}, M_{\mathcal{O}_{K}})_{\prism})\hookrightarrow DM((\mathcal{O}_{K}, M_{\mathcal{O}_{K}})_{\prism, \mr{kfl}})$.

\begin{lem}
Let $G$ be a log $p$-divisible group, which is contained in $(p-div/\mathcal{O}_{K})^{(\mr{\log})}_{d}$. Then $\mathcal{M}_{\prism}^{\mr{log}}(G)$ is contained in $DM((\mathcal{O}_{K}, M_{\mathcal{O}_{K}})_{\prism})$.
\end{lem}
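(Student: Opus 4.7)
Since $\epsilon^*_\prism : DM((\mathcal{O}_K, M_{\mathcal{O}_K})_\prism) \hookrightarrow DM((\mathcal{O}_K, M_{\mathcal{O}_K})_{\prism, \mr{kfl}})$ is fully faithful, the claim amounts to showing that the finite locally free $\mathcal{O}_{\prism, \mr{kfl}}$-module underlying $\mathcal{M}^{\mr{log}}_\prism(G)$ is in fact already finite locally free on the strict log-prismatic site.

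The first step is to mimic the descent argument of \cref{DM-descent}, but for the strict site. Since by \cref{strictness} any Kummer log-flat map between the log-prisms $\prism^{(n), \mr{log}}_{\mathcal{O}_C}$ is automatically strict, both \cref{LDM} and the weak initiality argument of \cref{weakly-initial} carry over to give an equivalence between $DM((\mathcal{O}_K, M_{\mathcal{O}_K})_\prism)$ and the category of admissible Dieudonn\'e modules $(M, \Phi_M)$ over $\prism_{\mathcal{O}_C}$ equipped with a descent datum along $p_1, p_2 : \prism_{\mathcal{O}_C} \rightrightarrows \prism^{(1), \mr{log}}_{\mathcal{O}_C}$ satisfying the cocycle condition. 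So what remains is the descent datum attached to $\mathcal{M}^{\mr{log}}_\prism(G)$: the underlying $\prism_{\mathcal{O}_C}$-module is the classical prismatic Dieudonn\'e module $\mathcal{M}_\prism(G_{\mathcal{O}_C})$ of \cite{ALB}, using \cref{strictness} and the remark preceding the theorem to identify $G_{\mathcal{O}_C}$ as a classical $p$-divisible group. Hence the question is purely whether this canonical descent datum is compatible with the strict topology on $\mathcal{O}_K$.

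The main obstacle, and the place where the hypothesis $G \in (p-div/\mathcal{O}_K)^{\mr{log}}_d$ is used, is verifying precisely this compatibility. The idea is that representability of $G_n$ and $G_n^\vee$ by log schemes means these functors are already well-defined on the strict log-prismatic site without needing Kummer refinements, so the base changes of $G$ along the two canonical maps $\mathcal{O}_K \to \mathcal{O}_C^{(1), \mr{log}}/(\xi)$ (which are strict by \cref{strictness}) are classical $p$-divisible groups canonically identified via the fact that $G$ is pulled back from $\mathcal{O}_K$. The resulting isomorphism on Dieudonn\'e modules gives the required strict descent datum. Concretely, the extension-sheaf computation of $\mathcal{M}^{\mr{log}}_\prism(G) = \mathcal{E}xt^1(\mu^\natural G, \mathcal{O}_{\prism, \mr{kfl}})$ can be carried out via resolutions of $G_n$ by representable objects whose $\mr{Ext}$-groups are insensitive to Kummer refinements, producing the required object of $DM((\mathcal{O}_K, M_{\mathcal{O}_K})_\prism)$ whose pullback under $\epsilon^*_\prism$ is $\mathcal{M}^{\mr{log}}_\prism(G)$.
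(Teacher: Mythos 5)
There is a genuine gap, and it sits at the very first step of your plan. You assert that the descent argument of \cref{DM-descent} ``carries over'' to the strict site, i.e.\ that $DM((\mathcal{O}_K, M_{\mathcal{O}_K})_{\prism})$ is equivalent to Dieudonn\'e modules over $\prism_{\mathcal{O}_C}$ with a descent datum along $\prism_{\mathcal{O}_C}\rightrightarrows \prism^{(1),\mr{log}}_{\mathcal{O}_C}$. This is false: $\prism_{\mathcal{O}_C}=A_{\mathrm{inf}}$ is weakly initial only for the \emph{Kummer} log-flat prismatic site, because the proof of \cref{weakly-initial} must pass to the saturation $\tilde A'\hat\otimes_{\Z[P']}\Z[(P')^{\mr{sat}}]$, which is a Kummer log-flat but not a strict cover of $A$ (likewise $(\mathcal{O}_K,\N)\to(\mathcal{O}_C,\Q_{\geq 0})$ is Kummer, not strict). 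If your claimed equivalence held, it would combine with \cref{DM-descent} to show that $\epsilon_{\prism}^{*}$ is essentially surjective, so the lemma would hold for \emph{every} $G\in(p\text{-}div/\mathcal{O}_K)^{\mr{log}}$ and the distinction between the two categories in the main theorem would collapse. The whole content of the lemma is precisely that $DM((\mathcal{O}_{K}, M_{\mathcal{O}_{K}})_{\prism})$ is a proper full subcategory of $DM((\mathcal{O}_{K}, M_{\mathcal{O}_{K}})_{\prism,\mr{kfl}})$, and one must identify which objects lie in it; a descent datum over $\prism^{(1),\mr{log}}_{\mathcal{O}_C}$ cannot detect this, since every $G$ (in the ``d'' subcategory or not) becomes classical over $\mathcal{O}_C$ by \cref{strictness}.

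Relatedly, the place where you invoke the hypothesis $G\in(p\text{-}div/\mathcal{O}_K)^{\mr{log}}_d$ is not an argument: representability of $G_n$ by a \emph{log} scheme does not make the $\mathcal{E}xt^1$-sheaf on the Kummer log-flat site ``insensitive to Kummer refinements,'' and no mechanism is given for producing the claimed resolutions. The paper uses the hypothesis through Kato's structure theorem: an object of $(p\text{-}div/\mathcal{O}_K)^{\mr{log}}_d$ sits in an exact sequence $0\to G'\to G\to G''\to 0$ with $G'$, $G''$ \emph{classical} $p$-divisible groups (its connected--\'etale sequence, \cite[\S 2.6]{k-ld}). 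Exactness of the Dieudonn\'e functor then exhibits $\mathcal{M}^{\mr{log}}_{\prism}(G)\vert_{(A,I,M_A)}$ as an extension, on $Spf(A)_{\mr{kfl}}$, of two sheaves that are locally free already for the strict flat topology, and \cite[Lem.~7.2]{k-ld} says that such an extension is itself locally free for the strict flat topology. That lemma of Kato is the key input replacing your ``insensitivity'' claim, and the connected--\'etale sequence is the key input replacing your appeal to representability; without some substitute for both, the proof does not go through.
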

\begin{proof}
Since $G\in (p-div/\mathcal{O}_{K})^{(\mr{\log})}_{d}$, by a result of Kato, there exists an exact sequence
\[0\to G'\to G\to G'' \to 0\]
where $G'$ and $G''$ are classical $p$-divisible groups (this is in fact the connected \'etale sequence of $G$, see \cite[\S 2.6]{k-ld}). Let $(A, I, M_{A})$ be a log-prism over $\mathcal{O}_{K}$. The restriction $\mathcal{M}_{\prism}^{\mr{log}}(G)\vert_{(A, I, M_{A})}$ to $(\mathcal{O}_{K}, M_{\mathcal{O}_{K}})_{\prism, \mr{kfl}}/(A, I, M_{A})$ then corresponds to a locally free sheaf on $Spf(A)_{kfl}$. Since the log-Dieudonn\'e functor is exact, there is an exact sequence
\[0\to \mathcal{M}_{\prism}^{\mr{log}}(G')\vert_{(A, I, M_{A})}\to \mathcal{M}_{\prism}^{\mr{log}}(G)\vert_{(A, I, M_{A})}\to \mathcal{M}_{\prism}^{\mr{log}}(G'')\vert_{(A, I, M_{A})}\to 0.\]
But, since $G'$ and $G''$ are classical, $\mathcal{M}_{\prism}^{\mr{log}}(G')\vert_{(A, I, M_{A})}$ and $\mathcal{M}_{\prism}^{\mr{log}}(G'')\vert_{(A, I, M_{A})}$ are in fact locally free in the strict flat topology $Spf(A)_{fl}$. It then follows by \cite[Lem. 7.2.]{k-ld} that $\mathcal{M}_{\prism}^{\mr{log}}(G)\vert_{(A, I, M_{A})}$ is locally free on $Spf(A)_{fl}$ as well. But this means that $\mathcal{M}_{\prism}^{\mr{log}}(G)\in DM((\mathcal{O}_{K}, M_{\mathcal{O}_{K}})_{\prism})$. 
\end{proof}
We wish to show that every object in $DM((\mathcal{O}_{K}, M_{\mathcal{O}_{K}})_{\prism})$ comes from a log $p$-divisible group in $(p-div/\mathcal{O}_{K})^{(\mr{\log})}_{d}$. The following contructions are inspired by \cite[(1.1.16)]{Kis-moduli}.
\begin{defn}
We call a log-Dieudonn\'e module $(\mathcal{M}, \Phi_{\mathcal{M}})$ multiplicative, if $\phi_{\prism}^{*}\mathcal{M}\to \mathcal{M}$ is an isomorphism.\\
We call $(\mathcal{M}, \Phi_{\mathcal{M}})$ \'etale, if the image of $\phi_{\prism}^{*}\mathcal{M}\to \mathcal{M}$ is equal to $\mathcal{I}\mathcal{M}$.
\end{defn}
As in the case of minuscule Breuil-Kisin modules, it is easy to see that if $\mathcal{M}$ is a multiplicative (resp. \'etale) log-Dieudonn\'e module then $\mathcal{M}^{\vee}$ is \'etale (resp. multiplicative). It turns out that \'etale and flat multiplicative log Dieudonn\'e modules which lie in $DM((\mathcal{O}_{K}, M_{\mathcal{O}_{K}})_{\prism})$ are classical.
\begin{lem}
Let $(\mathcal{M}, \Phi_{\mathcal{M}})\in DM((\mathcal{O}_{K}, M_{\mathcal{O}_{K}})_{\prism})$ be  a multiplicative or \'etale log-Dieudonn\'e module. Then $(\mathcal{M}, \Phi_{\mathcal{M}})$ is classical. 
\end{lem}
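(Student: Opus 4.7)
The plan is to reduce to the multiplicative case using Cartier duality and then exploit the rigidity of multiplicative $p$-divisible groups to show the corresponding log $p$-divisible group is classical. First I would invoke Cartier duality: the passage preceding the lemma observes that Cartier duality on log-Dieudonn\'e modules exchanges multiplicative and \'etale, and by the preceding proposition it is compatible with the log-prismatic Dieudonn\'e functor. Since classicality is preserved under duality, it suffices to treat the multiplicative case. So assume $(\mathcal{M}, \Phi_{\mathcal{M}})$ is multiplicative.

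Using the descent equivalence of \cref{DM-descent}, I would represent $\mathcal{M}$ as a triple $(M, \Phi_{M}, \epsilon)$, where $(M, \Phi_{M}) \in DM(\prism_{\mathcal{O}_{C}})$ is multiplicative and $\epsilon$ is a descent datum along $\prism_{\mathcal{O}_{C}}^{(1),\mr{log}}$. By the classical Ansch\"utz--Le Bras equivalence over $\mathcal{O}_{C}$ and the fact that any multiplicative formal $p$-divisible group over a complete local ring with algebraically closed residue field has the form $\mu_{p^{\infty}}^{r}$, the module $(M, \Phi_{M})$ corresponds to $H \cong \mu_{p^{\infty}}^{r}$ over $\mathcal{O}_{C}$ for some $r \geq 0$. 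Combining \cref{LDM} with \cref{strictness} (which forces every log $p$-divisible group over $\mathcal{O}_{C}^{(n),\mr{log}}$ to be classical), the descent datum $\epsilon$ translates into an isomorphism $p_{1}^{*}H \xrightarrow{\sim} p_{2}^{*}H$ over $\mathcal{O}_{C}^{(1),\mr{log}}$ satisfying the cocycle condition on $\mathcal{O}_{C}^{(2),\mr{log}}$.

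The heart of the argument is the rigidity $\operatorname{Aut}(\mu_{p^{\infty}}^{r}) = GL_{r}(\Z_{p})$ on any (log-)base: automorphisms of $\mu_{p^{\infty}}^{r}$ are determined by their action on the Tate module and do not depend on the log structure. Hence $\epsilon$ is encoded by a continuous Galois cocycle $\operatorname{Gal}(\bar K / K) \to GL_{r}(\Z_{p})$, and classical \'etale descent along $\mathcal{O}_{K} \to \mathcal{O}_{C}$ produces a classical multiplicative $p$-divisible group $G$ over $\mathcal{O}_{K}$ with $G \otimes_{\mathcal{O}_{K}} \mathcal{O}_{C} \cong H$ matching the descent datum. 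Its classical prismatic Dieudonn\'e crystal $\mathcal{M}_{\prism}(G)$ pulled back to the log-prismatic site recovers $\mathcal{M}$, by the compatibility of the two Dieudonn\'e functors noted in the remark after the definition of $\mathcal{M}_{\prism}^{\mr{log}}$, so $\mathcal{M}$ is classical. The main obstacle is this rigidity step: one must verify that the log structure on $\mathcal{O}_{C}^{(n),\mr{log}}$ contributes no extra automorphisms of $\mu_{p^{\infty}}^{r}$ beyond those coming from the underlying ring, which is essentially a sheaf-theoretic statement about $\mu_{p^{\infty}}$ on the Kummer log-flat site.
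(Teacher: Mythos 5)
There is a genuine gap, and it is located exactly at what you call the heart of the argument. Your proof works entirely inside the Kummer category: you invoke \cref{DM-descent} to present $\mathcal{M}$ as a Dieudonn\'e module over $\prism_{\mathcal{O}_{C}}$ with a descent datum over $\prism_{\mathcal{O}_{C}^{(1),\mathrm{log}}}$, and at no point do you use the hypothesis that $(\mathcal{M},\Phi_{\mathcal{M}})$ lies in $DM((\mathcal{O}_{K}, M_{\mathcal{O}_{K}})_{\prism})$ rather than merely in $DM((\mathcal{O}_{K}, M_{\mathcal{O}_{K}})_{\prism,\mathrm{kfl}})$. But the statement is false for the larger category: take $m\mid p-1$ with $m>1$ prime to $p$, let $\chi$ be a faithful character of $\mathrm{Gal}(K(\pi^{1/m})/K)$ valued in $\mu_{m}(\Z_{p})\subset\Z_{p}^{\times}$, and twist $\mu_{p^{\infty}}$ by $\chi$. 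This is a log $p$-divisible group that becomes $\mu_{p^{\infty}}$ over the Kummer cover $\mathcal{O}_{K(\pi^{1/m})}$, so its log-Dieudonn\'e module is multiplicative, yet it is not classical (its Tate module is $\Z_{p}(1)(\chi)$ with $\chi$ ramified, so it cannot come from a multiplicative $p$-divisible group over $\mathcal{O}_{K}$). Your argument, applied verbatim to this object, would ``prove'' it classical. The concrete failure is the step ``$\epsilon$ is encoded by a continuous Galois cocycle, and classical \'etale descent along $\mathcal{O}_{K}\to\mathcal{O}_{C}$ produces a classical multiplicative $p$-divisible group $G$ over $\mathcal{O}_{K}$'': the map $\mathcal{O}_{K}\to\mathcal{O}_{C}$ is not \'etale, the descent groupoid of the \emph{log} self-product $\mathcal{O}_{C}^{(1),\mathrm{log}}$ is strictly larger than the classical one (the saturation contributes an extra $\Q/\Z$-worth of components, cf.\ \cref{rmk-prod}, and these are precisely where the log twisting lives), and even for those descent data that are honest Galois cocycles, effectivity in classical $p$-divisible groups over $\mathcal{O}_{K}$ fails in general, as the example shows. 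Rigidity of $\mathrm{Aut}(\mu_{p^{\infty}}^{r})$ does not rescue this; it only tells you the descent datum is a locally constant $GL_{r}(\Z_{p})$-valued cocycle on the (too large) log descent groupoid.

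The paper's proof is built around exactly the input you are missing. It evaluates $\mathcal{M}$ on the Breuil--Kisin prism $\mathfrak{S}$, interprets the bijectivity of Frobenius via Artin--Schreier--Witt theory as a finite \'etale $\Z/p^{n}$-local system $\mathbb{L}$ on $\mathrm{Spf}(\mathfrak{S})$, and then compares the descent datum over the log self-product $\mathfrak{S}^{(1),\mathrm{log}}$ of the \emph{strict} site with the classical self-product $\mathfrak{S}^{(1)}$: since both rings are $E(u)$-complete, finite \'etale objects over them are determined by their reductions modulo $E(u)$, where the two projections $p_{1}, p_{2}$ collapse to the single structure map to $\mathcal{O}_{K}$. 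This is where membership in $DM((\mathcal{O}_{K}, M_{\mathcal{O}_{K}})_{\prism})$ is used, and it is the mechanism that rules out twists such as $\mu_{p^{\infty}}(\chi)$ above. To repair your argument you would need an analogous comparison between the log and classical descent groupoids for the strict site, at which point you are essentially reproducing the paper's proof rather than giving an alternative to it.
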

\begin{proof}
We prove the claim for multiplicative $(\mathcal{M}, \Phi_{\mathcal{M}})$ - the \'etale case then follows using Cartier duality. Let $\epsilon:(\mathcal{O}_{K}, M_{\mathcal{O}_{K}})_{\prism}^{\sim}\to (\mathcal{O}_{K})_{\prism}^{\sim}$ be the natural projection of topoi. We need to show that $\epsilon^{*}\epsilon_{*}(\mathcal{M}, \Phi_{\mathcal{M}})\to (\mathcal{M}, \Phi_{\mathcal{M}})$ is an isomorphism. For this, we first claim that $(\mathcal{M}, \Phi_{\mathcal{M}})$ is associated to a $\Z_p$-local system over $\mathcal{O}_{K}$. More precisely, for $n\geq 1$, consider $(\mathcal{M}/p^{n}, \Phi_{\mathcal{M}}/p^{n})$. It corresponds to an $\mathfrak{S}$-module $M/p^{n}$ with a bijective Frobenius $\Phi_{M}/p^{n}$, with descent datum over $\mathfrak{S}^{(1), \mr{log}}$. The Frobenius invariants then define an \'etale finite locally free $\Z/p^{n}$-module $\mathbb{L}$ on $Spf(\mathfrak{S})$, such that $\tilde{M}/p^{n}=\mathcal{L}\otimes \mathcal{O}_{\mathfrak{S}}$, where $\tilde{M}$ denotes the coherent sheaf on the \'etale site of $Spf(\mathfrak{S})$ associated to $M$. To see this, use the faithfully flat map $\mathfrak{S}\to A_{inf}$. Over $A_{inf}$ one can then construct $\mathbb{L}$, using Artin-Schreier-Witt theory. The local system then descends to $\mathfrak{S}$, using fpqc descent of finite \'etale algebras. The prismatic descent data for $(\mathcal{M},\Phi_{\mathcal{M}})$ is then also induced from an isomorphism $(p^{(\mr{log}}_{1})^{*}\mathbb{L}\cong (p^{(\mr{log}}_{2})^{*}\mathbb{L}$ over $Spf(\mathfrak{S}^{(1), \mr{log}})$. We claim that this isomorphism is induced from an isomorphism $p_{1}^{*}\mathbb{L}\cong p_{2}^{*}\mathbb{L}$ over $\mathfrak{S}^{(1)}$ (the self-product of $\mathfrak{S}$ in the classical prismatic site). Namely, since $\mathfrak{S}^{(1)}$ (resp. $\mathfrak{S}^{(1), \mr{log}}$) is $E(u)$-complete, the category of finite \'etale sheaves over $Spf(\mathfrak{S}^{(1)})$ (resp. over $Spf(\mathfrak{S}^{(1, \mr{log})})$) is equivalent to the category of finite \'etale sheaves over $Spf(\mathfrak{S}^{(1)}/E(u))$ (resp. over $Spf(\mathfrak{S}^{(1, \mr{log})}/E(u))$). But now $p_{1}$ and $p_{2}$ (resp. $p_{1}^{\mr{log}}$ and $p_{2}^{\mr{log}}$) both reduce modulo $E(u)$ to the structure map $Spf(\mathfrak{S}^{(1)}/E(u))\to Spf(\mathcal{O}_{K})$ (resp. $Spf(\mathfrak{S}^{(1, \mr{log})}/E(u))\to Spf(\mathcal{O}_{K})$). In particular, the restriction of $p_{1}^{*}\mathbb{L}$ and $p_{2}^{*}\mathbb{L}$ to $Spf(\mathfrak{S}^{(1)}/E(u))$ both coincide with the pullback of $\mathbb{L}\vert_{Spf(\mathcal{O}_{K})}$ along $Spf(\mathfrak{S}^{(1)}/E(u))\to Spf(\mathfrak{S}^{(1)})$. We therefore see that there is a unique isomorphism $p_{1}^{*}\mathbb{L}\cong p_{2}^{*}\mathbb{L}$, which induces the isomorphism $(p^{(\mr{log}}_{1})^{*}\mathbb{L}\cong (p^{(\mr{log}}_{2})^{*}\mathbb{L}$.\\
Now the above constructed isomorphism $(p^{(\mr{log}}_{1})^{*}\mathbb{L}\cong (p^{(\mr{log}}_{2})^{*}\mathbb{L}$ induces a descent datum for $(M, \Phi_{M})$ over $\mathfrak{S}^{(1)}$, which induces the descent datum over $\mathfrak{S}^{(1, \mr{log})}$. This shows that $(\mathcal{M}, \Phi_{\mathcal{M}})$ indeed is classical, i.e. lies in the essential image of $\epsilon^{*}$. 
\end{proof}
The following lemma is a consequence of \cite[Prop. 1.2.11]{Kis-moduli}.
\begin{lem}
Let $(\mathcal{M}, \Phi_{\mathcal{M}})\in DM((\mathcal{O}_{K}, M_{\mathcal{O}_{K}})_{\prism})$ be a flat log-Dieudonn\'e module. Then there exists an exact sequence
\[0\to \mathcal{M}^{m}\to \mathcal{M}\to \mathcal{M}^{\mr{\acute{e}t}}\to 0\]
of flat log-Dieudonn\'e modules, where $\mathcal{M}^{m}$ is the maximal multiplicative sub-Dieudonn\'e module of $\mathcal{M}$ and $\mathcal{M}^{\mr{\acute{e}t}}$ is \'etale.
\end{lem}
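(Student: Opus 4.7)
The plan is to reduce the statement to Kisin's classical Breuil-Kisin calculation by evaluating $\mathcal{M}$ at a Breuil-Kisin log-prism, applying Kisin's Proposition 1.2.11 there, and then checking that the resulting multiplicative submodule is compatible with the log descent data.

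First I would fix the Breuil-Kisin log-prism $\mathfrak{S}=(W(k)[[u]], (E(u)))$ equipped with the log structure induced by the chart $\N\to W(k)[[u]]$, $1\mapsto u$, and set $M:=\mathcal{M}(\mathfrak{S})$. This is a finite locally free $\mathfrak{S}$-module equipped with a Frobenius whose linearization has $E(u)$-torsion cokernel, i.e.\ a flat Breuil-Kisin module of height $1$ in the sense of \cite{Kis-moduli}. An analogue of \cref{weakly-initial} in the strict setting (replacing $A_{\mr{inf}}$ by $\mathfrak{S}$ and using that every strict log-prism over $\mathcal{O}_{K}$ admits a flat cover receiving a map from $\mathfrak{S}$) implies that $\mathcal{M}$ is determined by $M$ together with a descent datum $\iota:p_{1}^{*}M\xrightarrow{\cong} p_{2}^{*}M$ over the self-coproduct $\mathfrak{S}^{(1), \mr{log}}$ in $(\mathcal{O}_{K}, M_{\mathcal{O}_{K}})_{\prism}$, subject to the usual cocycle condition.

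Next, I would apply \cite[Prop. 1.2.11]{Kis-moduli} directly to $M$ to produce the canonical short exact sequence
\[0\to M^{m}\to M\to M^{\mr{\acute{e}t}}\to 0,\]
in which $M^{m}$ is the maximal sub-BK-module on which Frobenius is bijective and $M^{\mr{\acute{e}t}}$ is \'etale. The intrinsic characterization of $M^{m}$ ensures that any morphism of flat Breuil-Kisin modules sends maximal multiplicative submodule into maximal multiplicative submodule, and that $(-)^{m}$ commutes with the relevant pullbacks; in particular $p_{i}^{*}M^{m}=(p_{i}^{*}M)^{m}$ inside $p_{i}^{*}M$ for $i=1,2$. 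Therefore $\iota$ automatically restricts to an isomorphism $p_{1}^{*}M^{m}\xrightarrow{\cong}p_{2}^{*}M^{m}$, and the cocycle condition for this restriction is inherited from that for $\iota$.

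Finally, the restricted descent datum on $M^{m}$ glues to a log-Dieudonn\'e sub-crystal $\mathcal{M}^{m}\subset \mathcal{M}$ with quotient an \'etale log-Dieudonn\'e crystal $\mathcal{M}^{\mr{\acute{e}t}}$, both locally free and satisfying the flatness condition inherited from $\mathcal{M}$. The maximality of $\mathcal{M}^{m}$ as a multiplicative sub-crystal follows from the maximality of $M^m$ inside $M$ together with the weakly initial property of $\mathfrak{S}$. The main technical obstacle I anticipate is justifying the compatibility of Kisin's formation of $(-)^{m}$ with base change to the rather non-standard ring $\mathfrak{S}^{(1), \mr{log}}$: it is not itself a Breuil-Kisin ring, so one must either verify directly that the relevant parts of Kisin's argument extend to it (e.g.\ by covering it by BK-type prisms where the theory applies and invoking descent), or argue abstractly that the maximal subobject on which Frobenius is bijective is functorial under any morphism in the category of flat $(\phi, I)$-modules at hand.
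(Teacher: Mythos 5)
Your strategy coincides with the paper's: both reduce to Kisin's \cite[Prop.~1.2.11]{Kis-moduli} applied to $M=\mathcal{M}(\mathfrak{S})$ and then must show that the resulting multiplicative part is compatible with the crystal/descent structure. The problem is that the step you defer at the end --- the compatibility of $(-)^{m}$ with base change along maps of log-prisms out of $\mathfrak{S}$ --- is not a peripheral technicality but the entire mathematical content of the proof, and your earlier assertion that the ``intrinsic characterization of $M^{m}$'' ensures $p_{i}^{*}M^{m}=(p_{i}^{*}M)^{m}$ is exactly what needs to be proved. The multiplicative part is an infinite intersection, $M^{m}=\bigcap_{n\geq 0}(\phi^{n})^{*}M$ (with $(\phi^{n})^{*}M$ embedded in $M$ via the iterated linearized Frobenius), and infinite intersections do not in general commute with (completed) tensor products; abstract functoriality of ``the maximal subobject on which Frobenius is bijective'' under a morphism $M\to N$ of flat $(\phi,I)$-modules only gives a map $A\hat{\otimes}_{\mathfrak{S}} M^{m}\to (A\hat{\otimes}_{\mathfrak{S}} M)^{m}$, not that it is an isomorphism. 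Without that, the restriction of the descent datum $\iota$ to $p_{1}^{*}M^{m}$ is not known to land isomorphically onto $p_{2}^{*}M^{m}$, and your gluing step does not go through.

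The paper closes precisely this gap. It defines $\mathcal{M}^{m}:=\bigcap_{n\geq 0}(\phi^{n})^{*}\mathcal{M}$ as a subsheaf on the whole site, uses \cite[Prop.~1.2.11]{Kis-moduli} to see that $M^{m}$ and $M/M^{m}$ are finite projective over $\mathfrak{S}$, and then proves $M^{m}\hat{\otimes}_{\mathfrak{S}}A=\mathcal{M}^{m}(A)$ for any map of bounded log-prisms $\mathfrak{S}\to A$ by a stabilization argument: since $\mathfrak{S}/(p,E)^{k}$ is Artinian for each $k$, the descending chain of images of $(\phi^{n})^{*}M$ stabilizes modulo $(p,E)^{k}$, so the cokernel of $M^{m}\hat{\otimes}_{\mathfrak{S}}A\hookrightarrow\mathcal{M}^{m}(A)$ vanishes modulo every power of $(p,E)$ and hence vanishes by $(p,E)$-completeness and Nakayama; general maps $A\to B$ are then handled by descent from the weakly initial object. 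Some version of this argument (or an equivalent one applied to $\mathfrak{S}^{(1),\mathrm{log}}$, which as you note is not of Breuil--Kisin type, so Kisin's proof cannot simply be re-run there) is what your write-up is missing.
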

\begin{proof}
We define $\mathcal{M}^{m}:=\bigcap_{n\geq 0}(\phi^{n})^{*}\mathcal{M} \subset \mathcal{M}$, where $(\phi^{n})^{*}\mathcal{M}$ is viewed as a subsheaf of $\mathcal{M}$ via
\[(\phi^{n})^{*}\mathcal{M}\xrightarrow{(\phi_{n})^{*}\Phi_{M}}(\phi^{n-1})^{*}\mathcal{M}\rightarrow \cdots \rightarrow \phi^{*}\mathcal{M}\xrightarrow{\Phi_{m}} \mathcal{M}.\]
We claim that $\mathcal{M}^{m}$ is a locally free crystal. Evaluating at $\mathfrak{S}$ and using \cite[Prop. 1.2.11]{Kis-moduli}, we see that $M^{m}:=\mathcal{M}^{m}(\mathfrak{S})$ and $M/M^{m}$ are finite projective $\mathfrak{S}$-modules. If $\mathfrak{S}\to A$ is a map of bounded log-prisms, we then see that $M^{m}\hat{\otimes}_{\mathfrak{S}}A\to \mathcal{M}(A)$ is injective. 
\begin{claim}
We have $M^{m}\hat{\otimes}_{\mathfrak{S}}A=\mathcal{M}^{m}(A)$ under the identification $M\hat{\otimes}_{\mathfrak{S}}A=\mathcal{M}(A)$.
\end{claim}
Namely, $\mathcal{M}^{m}(A)$ is identified with the kernel of the map $\mathcal{M}(A)\to \prod_{n\geq1}\mathcal{M}(A)/(\phi^{n})^{*}\mathcal{M}(A)$. Now $\mathcal{M}^{m}(A)=\bigcap_{n\geq 0}(\phi^{n})^{*}\mathcal{M}(A)$ is closed in $\mathcal{M}(A)$. Since $\mathcal{M}(A)$ is a projective $A$-module, it is bounded (as $A$ is bounded), hence $\mathcal{M}^{m}(A)$ is bounded as well, so it is classically $(p, E)$-complete. We clearly have $M^{m}\hat{\otimes}_{\mathfrak{S}}A\subset \mathcal{M}^{m}(A)$ and we want to show that this is an equality. $M^{m}\hat{\otimes}_{\mathfrak{S}}A$ identifies with the kernel of $M\hat{\otimes}A\to (\prod_{n}M/(\phi^{n})^{*}M)\hat{\otimes}A$. Now for any $k>0$, the ring $\mathfrak{S}/(p, E)^{k}$ is Artinian, so $Im(\bigcap_{n}(\phi^{n})^{*}M\to M/(p, E)^{k})=Im((\phi^{N_{k}})^{*}M\to M/(p, E)^{k})$, for some $N_{k}>>0$. From this we get that $coker(M^{m}\hat{\otimes}_{\mathfrak{S}}A\subset \mathcal{M}^{m}(A))/(p, E)^{k}=0$, for all $k>0$. Since $coker(M^{m}\hat{\otimes}_{\mathfrak{S}}A\subset \mathcal{M}^{m}(A))$ is $(p, E)$-complete, we see that $coker(M^{m}\hat{\otimes}_{\mathfrak{S}}A\subset \mathcal{M}^{m}(A))=0$, using Nakayama's lemma.\\
For a general map of log prisms $A\to B$ over $\mathcal{O}_{K}$, we get $\mathcal{M}^{m}(A)\hat{\otimes}_{A}B=\mathcal{M}^{m}(B)$ by descent, using that $\mathfrak{S}$ is weakly initial.
\end{proof}

\begin{prop}\label{flat-equiv}
The log-Dieudonn\'e functor restricts to an equivalence of categories
\[\mathcal{M}_{\prism}:(p-div/\mathcal{O}_{K})^{(\mr{\log})}_{d}\longleftrightarrow DM((\mathcal{O}_{K}, M_{\mathcal{O}_{K}})_{\prism}).\]
\end{prop}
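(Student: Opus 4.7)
The plan is to reduce the claim to an essential surjectivity statement. The previous lemma shows that $\mathcal{M}_{\prism}^{\mr{log}}$ maps $(p-div/\mathcal{O}_{K})^{\mr{log}}_{d}$ into $DM((\mathcal{O}_{K}, M_{\mathcal{O}_{K}})_{\prism})$, so we obtain a well-defined functor between the subcategories. Fully faithfulness follows at once from the main theorem, since $\epsilon_{\prism}^{*}:DM((\mathcal{O}_{K}, M_{\mathcal{O}_{K}})_{\prism})\hookrightarrow DM((\mathcal{O}_{K}, M_{\mathcal{O}_{K}})_{\prism, \mr{kfl}})$ is a full embedding and $\mathcal{M}_{\prism}^{\mr{log}}$ is an anti-equivalence at the level of the larger categories. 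Thus what remains is to check the following: if $(\mathcal{M}, \Phi_{\mathcal{M}})\in DM((\mathcal{O}_{K}, M_{\mathcal{O}_{K}})_{\prism})$ and $G$ is the unique log $p$-divisible group with $\mathcal{M}_{\prism}^{\mr{log}}(G)\cong \epsilon_{\prism}^{*}(\mathcal{M}, \Phi_{\mathcal{M}})$ given by the main theorem, then $G\in (p-div/\mathcal{O}_{K})^{\mr{log}}_{d}$.

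To that end, I will apply the splitting lemma to the flat log-Dieudonn\'e module $(\mathcal{M}, \Phi_{\mathcal{M}})$, obtaining an exact sequence
\[0\to \mathcal{M}^{m}\to \mathcal{M}\to \mathcal{M}^{\mr{\acute{e}t}}\to 0\]
in $DM((\mathcal{O}_{K}, M_{\mathcal{O}_{K}})_{\prism})$, with $\mathcal{M}^{m}$ multiplicative and $\mathcal{M}^{\mr{\acute{e}t}}$ \'etale. The classicality lemma for multiplicative and \'etale objects then shows that both $\mathcal{M}^{m}$ and $\mathcal{M}^{\mr{\acute{e}t}}$ lie in the essential image of $\epsilon^{*}v^{*}$ applied to classical prismatic Dieudonn\'e modules. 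Hence, invoking the classical prismatic correspondence of Ansch\"utz--Le Bras, they correspond to classical $p$-divisible groups $G'$ and $G''$ over $\mathcal{O}_{K}$, equipped with the induced log structure. By exactness of the anti-equivalence $\mathcal{M}_{\prism}^{\mr{log}}$, translating the above sequence back yields an exact sequence
\[0\to G'\to G\to G''\to 0\]
in $(p-div/\mathcal{O}_{K})^{\mr{log}}$, in which $G'$ is multiplicative and $G''$ is \'etale (as classical $p$-divisible groups); this is precisely the connected-\'etale sequence of $G$ in the sense of \cite[\S 2.6]{k-ld}.

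It then remains to argue that the existence of such a sequence forces $G\in (p-div/\mathcal{O}_{K})^{\mr{log}}_{d}$. Level by level one has exact sequences $0\to G'_{n}\to G_{n}\to G''_{n}\to 0$ with $G'_{n}$, $G''_{n}$ classical finite locally free group schemes over $\mathcal{O}_{K}$, whence $G'_{n}, G''_{n}\in (fin/S)_{d}$. Representability of $G_{n}$ (and dually of $G_{n}^{\vee}$, using that Cartier duality turns this sequence into one of the same type) follows from Kato's extension result \cite[Lem. 7.2]{k-ld}, analogously to its use in the previous lemma. The main technical point I expect to need to verify carefully is that the splitting $\mathcal{M}^{m}\subset \mathcal{M}$ as constructed from the iterated Frobenius pullbacks is indeed stable under the prismatic descent datum relating $\mathcal{M}$ to the corresponding Dieudonn\'e module over $\prism_{\mathcal{O}_{C}}$, so that the resulting classical subgroup $G'\subset G$ is a genuine sub-log-$p$-divisible group rather than just a sub after base change to $\mathcal{O}_{C}$; this should however be automatic from functoriality of the maximal multiplicative submodule construction.
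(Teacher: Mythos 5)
Your proposal is correct and follows essentially the same route as the paper: full faithfulness is inherited from the main theorem via the full embedding $\epsilon_{\prism}^{*}$, and essential surjectivity is obtained by splitting $\mathcal{M}$ into its multiplicative and \'etale parts, observing these are classical, and transporting the resulting exact sequence back to the connected--\'etale sequence $0\to G'\to G\to G''\to 0$ with $G'$, $G''$ classical, whence representability of $G$ and (by duality) of $G^{\vee}$. Your extra remark about compatibility of $\mathcal{M}^{m}$ with the descent datum is indeed automatic, since the paper constructs $\mathcal{M}^{m}$ directly as a subcrystal on the log-prismatic site.
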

\begin{proof}
Let $(\mathcal{M}, \Phi_{\mathcal{M}})\in DM((\mathcal{O}_{K}, M_{\mathcal{O}_{K}})_{\prism})$. Let $G$ be the log $p$-divisible group, such that $\mathcal{M}_{\prism}(G)=\mathcal{M}$. By the previous lemma, there is an exact sequence of log-Dieudonn\'e modules
\[0\to \mathcal{M}^{m}\to \mathcal{M}\to \mathcal{M}^{\acute{e}t}\to 0\]
such that $\mathcal{M}^{m}$ and $\mathcal{M}^{\acute{e}t}$ are classical. Using the exactness of the log Dieudonn\'e functor, we thus see that there is an exact sequence of log $p$-divisible groups (which of course identifies with the connected \'etale sequence)
\[0\to G'\to G\to G''\to 0\]
such that $G'$ and $G''$ are classical. But this in particular implies that $G$ is representable. By duality, one then also finds that $G^{\vee}$ is representable as well.
\end{proof}

\begin{rmk}
\begin{enumerate}
\item By a result of Kato (see \cite[Cor. 2.14]{WZ}), the category $(p-div/\mathcal{O}_{K})^{\mr{log}}_{d}$ is equivalent to the category of pairs $(G, N)$, where $G$ is a classical $p$-divisible group and $N:G^{\acute{e}t}(1)\to G^{\circ}$ is a morphism (the equivalence depends on the choice of a uniformizer). Using \cref{flat-equiv}, we see that we get a similar description for log Dieudonn\'e modules. Namely, there is an equivalence of categories (depending on $\pi \in \mathcal{O}_{K}$) between $DM((\mathcal{O}_{K}, M_{\mathcal{O}_{K}})_{\prism})$ and the category of pairs $((\mathcal{M}, \Phi_{\mathcal{M}}), N)$, where $(\mathcal{M}, \Phi_{\mathcal{M}})$ is a classical prismatic Dieudonn\'e module and $N:\mathcal{M}^{m}\to \mathcal{M}^{\acute{e}t}(-1)$ is a morphism of Dieudonn\'e modules.
\item By \cite[Thm. 1.1.3]{DL}, the category of flat log-Dieudonn\'e modules is equivalent, via the \'etale realization functor, to lattices in semistable $Gal(\bar{K}/K)$-representations with Hodge-Tate weights in $\{0, 1\}$. We thus see that \cref{flat-equiv} implies that the category $(p-div)_{d}^{\mr{log}}$ corresponds is anti-equivalent to the category of lattices in semistable $Gal(\bar{K}/K)$-representations with Hodge-Tate weights in $\{0, 1\}$. We remark that this has also already been proved in \cite{BWZ}, using other methods.
\item Assume that $p>2$. Let $\mathcal{S}$ be the Breuil-ring, which is the $p$-completed PD-hull of the kernel of $\mathfrak{S}\to \mathcal{O}_{K}$. It carries the PD-filtration $Fil_{\mathcal{S}}^{i}\mathcal{S}\subset \mathcal{S}$. Recall that a minuscule Breuil-module (or strongly divisible module) is given by a quadruple $(M, Fil^{1}M, \Phi_{M}, N)$, where
\begin{itemize}
\item $M$ is a finite free $\mathcal{S}$-module.
\item $Fil^{1}M\subset M$ is a submodule, such that $Fil^{1}_{\mathcal{S}}\cdot M\subset Fil^{1}M$.
\item $\Phi_{M}:M\to M$ is a Frobenius-linear map, such that the image of $Fil^{1}M$ generates $pM$.
\item A monodromy operator $N:M\to M$, satisfying $N(sm)=N_{\mathcal{S}}(s)m+sN(m)$, for all $s\in \mathcal{S}$ and $m\in M$, where $N_{\mathcal{S}}$ is the operator induced by $-u\frac{d}{du}$.
\end{itemize}
By a theorem of Breuil-Liu (see \cite[Theorem 2.3.5]{liu-conj}), the category $BM_{\mathcal{S}}^{1}$ of minuscule Breuil-modules is again equivalent to the category of lattices in semistable representations with Hodge-Tate weights in $\{0, 1\}$. So in particular it is also equivalent to the category $DM((\mathcal{O}_{K}, M_{\mathcal{O}_{K}})_{\prism})$ (and hence also anti-equivalent to the category $(p-div/\mathcal{O}_{K})^{(\mr{\log})}_{d}$).\\
The equivalence
\[DM((\mathcal{O}_{K}, M_{\mathcal{O}_{K}})_{\prism})\longleftrightarrow BM_{\mathcal{S}}^{1}\]
can in fact be constructed directly (i.e. without referring to \cite{liu-conj} and using \'etale realizations) by extending the approach in \cite{W} to log-objects.
\end{enumerate}
\end{rmk}

\bibliographystyle{plain}
\bibliography{bib-LD}

\begin{thebibliography}{10}

\bibitem{ALB}
Johannes Ansch\"utz and Arthur-C\'esar Le~Bras.
\newblock {Prismatic Dieudonn\'e Theory}.
\newblock {\em Forum Math. Pi}, 11:e2, 2023.

\bibitem{BWZ}
Alessandra Bertapelle, Shanwen Wang, and Heer Zhao.
\newblock Log $p$-divisible groups and semi-stable representations, 2023.

\bibitem{BS-pris-cris}
Bhargav Bhatt and Peter Scholze.
\newblock {Prismatic $F$-crystals and crystalline Galois representations}.
\newblock {\em arXiv e-print}, (2106.14735), 2021.

\bibitem{BS-pris}
Bhargav Bhatt and Peter Scholze.
\newblock {Prisms and prismatic cohomology}.
\newblock {\em Ann. of Math.}, 196(3):1135 -- 1275, 2022.

\bibitem{Breuil-grp}
Christophe Breuil.
\newblock Groupes $p$-divisibles, groupes finis et modules filtr{\'e}s.
\newblock {\em Annals of Mathematics}, 152:489--549, 2000.

\bibitem{DL}
Heng Du and Tong Liu.
\newblock A prismatic approach to $(\varphi, \hat g)$-modules and $f$-crystals,
  2023.

\bibitem{Kat2}
Kazuya Kato.
\newblock Logarithmic structures of fontaine-illusie. ii.
\newblock {\em arXiv preprint arXiv:1905.10678}, 2019.

\bibitem{k-ld}
Kazuya Kato.
\newblock Logarithmic dieudonne theory, 2023.

\bibitem{Kisin-moduli}
Mark Kisin.
\newblock Moduli of finite flat group schemes, and modularity.
\newblock {\em Ann. of Math. (2)}, 170(3):1085--1180, 2009.

\bibitem{Kis-moduli}
Mark Kisin.
\newblock Moduli of finite flat group schemes, and modularity.
\newblock {\em Ann. of Math. (2)}, 170(3):1085--1180, 2009.

\bibitem{Ko1}
Teruhisa Koshikawa.
\newblock Logarithmic prismatic cohomology i, 2022.

\bibitem{Ko2}
Teruhisa Koshikawa and Zijian Yao.
\newblock Logarithmic prismatic cohomology ii, 2023.

\bibitem{liu-conj}
Tong Liu.
\newblock {On lattices in semi-stable representations: a proof of a conjecture
  of Breuil}.
\newblock {\em Compos. Math.}, 144(1):61–--88, 2008.

\bibitem{niz1}
Wies{\l}awa Nizio{\l}.
\newblock {$K$}-theory of log-schemes. {I}.
\newblock {\em Doc. Math.}, 13:505--551, 2008.

\bibitem{W}
Matti W\"urthen.
\newblock Divided prismatic frobenius crystals of small height and the category
  $\mathcal{M}\mathcal{F}$, 2023.

\bibitem{WZ}
Matti W\"urthen and Heer Zhao.
\newblock Log p-divisible groups associated with log 1-motives.
\newblock {\em Canadian Journal of Mathematics}, pages 1–--38, 2023.

\end{thebibliography}

\end{document}